\documentclass[hidelinks,onefignum,onetabnum]{siamart251216}

\usepackage{cite}
\usepackage{afterpage}
\usepackage{jabbrv}
\DefineSpuriousJournalWord{on}
\DefineSpuriousJournalWord{Its}
\usepackage{amsmath,amsfonts,amscd,amssymb}
\usepackage{mathtools}
\usepackage{mathrsfs}
\usepackage{graphicx}
\usepackage{epstopdf}
\usepackage{algorithmic}
\usepackage{enumitem}
\usepackage{booktabs}
\usepackage[normalem]{ulem}
\ifpdf
  \DeclareGraphicsExtensions{.eps,.pdf,.png,.jpg}
\else
  \DeclareGraphicsExtensions{.eps}
\fi

\def\thick{0.8}
\usepackage{xcolor}

\newcommand*{\dd}{{\,\mathrm{d}}}
\newcommand{\koop}{\mathcal{K}}

\newcommand{\Lv}{\mathbf{L}}

\newcommand{\Wv}{\mathbf{W}}

\newcommand{\Psiv}{\mathbf{\Psi}}

\DeclareMathOperator*{\argmin}{argmin}

\usepackage{comment}

\newsiamremark{remark}{Remark}
\newsiamremark{hypothesis}{Hypothesis}
\crefname{hypothesis}{Hypothesis}{Hypotheses}
\newsiamthm{claim}{Claim}
\newsiamremark{fact}{Fact}
\crefname{fact}{Fact}{Facts}

\headers{Trustworthy Koopman Operator Learning}{G. Conradie, N. Boull\'e, J-C. Loiseau,  S. L. Brunton, M. J. Colbrook,}

\title{Trustworthy Koopman Operator Learning:\\Invariance Diagnostics and Error Bounds}

\author{Gustav Conradie\thanks{Centre for Mathematical Sciences, University of Cambridge, UK 
  (\email{gjc51@cam.ac.uk}, \email{mjc249@cam.ac.uk})}
\and Nicolas Boull\'e\thanks{Department of Mathematics, Imperial College London, UK (\email{n.boulle@imperial.ac.uk})}
  \and Jean-Christophe Loiseau\thanks{Laboratoire DynFluid, Arts et Métiers Institute of Technology, France (\email{jean-christophe.loiseau@ensam.eu})}
  \and ~~~~Steven L. Brunton\thanks{Department of Mechanical Engineering, University of Washington, USA
  (\email{sbrunton@uw.edu})}
    \and Matthew J. Colbrook\footnotemark[1]}

\usepackage{amsopn}

\begin{document}

\maketitle

\begin{abstract}
Koopman operator theory provides a global linear representation of nonlinear dynamics and underpins many data-driven methods. In practice, however, finite-dimensional feature spaces induced by a user-chosen dictionary are rarely invariant, so closure failures and projection errors lead to spurious eigenvalues, misleading Koopman modes, and overconfident forecasts. This paper addresses a central validation problem in data-driven Koopman methods: how to quantify invariance and projection errors for an arbitrary feature space using only snapshot data, and how to use these diagnostics to produce actionable guarantees and guide dictionary refinement? A unified a posteriori methodology is developed for certifying when a Koopman approximation is trustworthy and improving it when it is not. Koopman invariance is quantified using principal angles between a subspace and its Koopman image, yielding principal observables and a principal angle decomposition (PAD), a dynamics-informed alternative to SVD truncation with significantly improved performance. Multi-step error bounds are derived for Koopman and Perron--Frobenius mode decompositions, including RKHS-based pointwise guarantees, and are complemented by Gaussian process expected error surrogates. The resulting toolbox enables validated spectral analysis, certified forecasting, and principled dictionary and kernel learning, demonstrated on chaotic and high-dimensional benchmarks and real-world datasets, including cavity flow and the Pluto--Charon system.
\end{abstract}

\begin{keywords}
Koopman operator, dynamic mode decomposition, data-driven discovery, physics-informed machine learning, spectral theory, forecasts
\end{keywords}

\begin{MSCcodes}
37M10, 37N10, 47A10, 47B32, 47B33, 60G15, 65L70, 65P99
\end{MSCcodes}

\setcounter{tocdepth}{2}
\tableofcontents
\linespread{0.97}
\section{Introduction}
Over the past decade, data-driven modeling of dynamical systems has become increasingly important, particularly when governing equations are unknown, incomplete, or prohibitively expensive to resolve at application-relevant fidelity~\cite{Brunton2022book,karniadakis2021physics,mezic2013analysis}. A common objective is to extract \emph{structure}: low-complexity representations that are interpretable, transferable across regimes, and amenable to analysis and control. Recent advances in sparse and symbolic regression \cite{Bongard2007pnas,Schmidt2009science,cranmer2023interpretable,brunton2016discovering} exemplify this emphasis on parsimony and interpretability. Another foundational objective is \emph{coordinate discovery}: identifying transformations under which the dynamics become simpler, or ideally admit an accurate linear representation.

Koopman operator theory provides a principled route to such linearization~\cite{mezic2005spectral,budivsic2012applied,brunton2021modern}. Rather than evolving the state forward by a nonlinear map, Koopman theory studies the evolution of \emph{observables} (functions of the state). The resulting Koopman operator is always linear (and typically infinite-dimensional), and its spectral structure underpins the Koopman mode decomposition (KMD) used for model reduction, forecasting, and control. Standard approximation methods, such as Dynamic Mode Decomposition (DMD)~\cite{schmid2010dynamic,rowley2009spectral,kutz2016dynamic,colbrook2023multiverse} and Extended Dynamic Mode Decomposition (EDMD)~\cite{williams2015data,williams2015kernel}, can be viewed as Galerkin projections of the infinite-dimensional Koopman operator onto a finite-dimensional subspace of observables. These data-driven algorithms have proved highly successful, largely due to their simple formulation in terms of numerical linear algebra~\cite{tu2014dynamic} and their flexibility and extensibility~\cite{kutz2016dynamic,schmid2022dynamic}. Notable successes span a wide range of applications, including robot control~\cite{haggerty2023control}, climate dynamics~\cite{froyland2021spectral}, training neural networks~\cite{orvieto2023resurrecting}, epidemiological modeling~\cite{proctor2015discovering}, and neuroscience~\cite{brunton2016extracting}.

\begin{figure}[t]
    \centering
    \includegraphics[width=\textwidth]{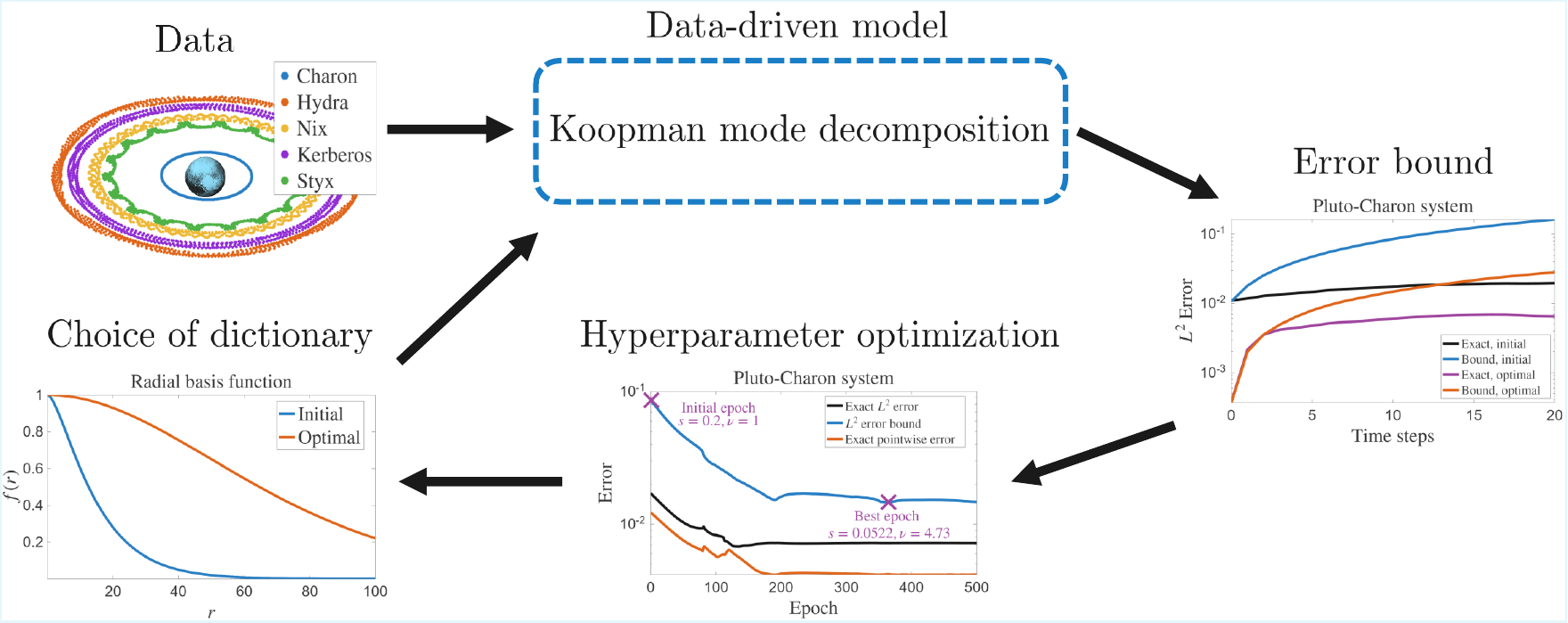}    
    \caption{Workflow for computing error bounds on Koopman operators and learning a dictionary.}
    \label{fig:flowchart_paper}
\end{figure}

However, these algorithms often suffer from spurious dynamics and artifacts. Many analyses of DMD implicitly assume that the chosen dictionary spans a finite-dimensional, non-trivial (i.e., not merely constant) invariant subspace. In practice, such invariance is exceedingly rare. One must therefore work with \textit{approximately} invariant subspaces, which causes closure errors~\cite{Bagheri2014pof,brunton2016koopman,dawson2016characterizing,hemati2017biasing,brunton2021modern,colbrook2024rigorous}. In turn, this leads to spurious eigenvalues (spectral pollution), misleading modes, and overly optimistic long-horizon predictions. More generally, the errors in DMD and the associated KMD can be grouped into three categories:
\begin{itemize}[leftmargin=*]
	\item \textit{Estimation error}, due to approximating the projected Koopman operator from a finite set of potentially noisy trajectory data.
	\item \textit{Numerical error} (e.g., roundoff, stability, or further compression), incurred when computing properties of the (finite-dimensional) projected Koopman operator.
    \item \textit{Projection error} (or approximation error), arising from truncating the infinite-dimensional Koopman operator to a finite-dimensional space of observables.
\end{itemize}
While substantial progress has been made in controlling estimation and numerical errors in the presence of noisy and finite data (see \cref{sec:existing_work}), bounds on projection errors remain scarce. Thus, a central practical question remains:
\begin{quote}
\emph{Given snapshot data and a chosen dictionary, how can one certify whether the associated Koopman approximation is trustworthy, and how should the dictionary be refined when it is not?}
\end{quote}

This paper develops a methodology for answering this question using \emph{a posteriori} certificates that quantify projection error and subspace invariance directly from the same snapshot data used by DMD/EDMD. The starting point is recent convergent, pollution-free approaches for approximating spectral properties of an infinite-dimensional Koopman operator and its adjoint (the Perron--Frobenius operator)~\cite{colbrook2024rigorous,boulle2025convergent}. We repurpose and extend those ideas into a broader methodology comprising: (i) computable bounds that control the projection error and propagate to certified statements about Koopman and Perron--Frobenius mode decompositions (PFMD) and forecasts across different function spaces, including pointwise bounds; (ii) a geometry-based diagnostic of invariance via principal angles, leading to a \emph{principal angle decomposition} (PAD) that plays a dynamics-informed role analogous to the SVD (e.g., for dimensionality reduction); and (iii) complementary \emph{expected error} surrogates, based on Gaussian process modeling, that are useful when fully rigorous bounds are conservative. Numerical experiments demonstrate the efficacy of the proposed methods and illustrate key theoretical features. Applications include several high-dimensional and chaotic systems, as well as real-world examples. The result is a practical and theoretically grounded toolbox that enables validated Koopman spectral analysis, certified forecasting, and principled dictionary selection (illustrated in \cref{fig:flowchart_paper}) for data-driven modeling of complex dynamical systems.

\begin{figure}[t]
    \centering
    \includegraphics[width=1\linewidth]{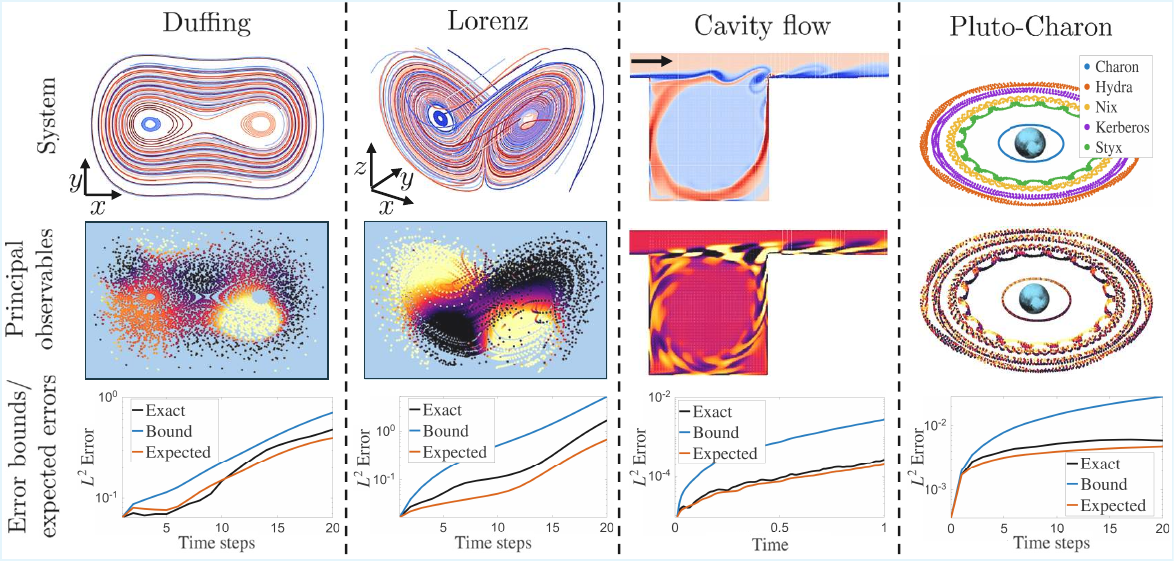}    
    \caption{A summary of the numerical examples of this paper and some of the results of the experiments applied to them. Principal observables are discussed in \cref{sec:Koopman_invariance_measure}, and error bounds and expected errors are discussed in \cref{sec_kmd_eb,sec:expected_errors}, respectively.}
    \label{fig:placeholder}
\end{figure}

\linespread{0.97}
\begin{table}[t]
    \caption{The key algorithms of this paper.}
    \begin{center}
        \begin{tabular}{ lll }
            \toprule[\thick pt]
            Computed quantity  & Pseudocode       & Theoretical results         \\
            \midrule[\thick pt]
            Principal angles and observables                    & \cref{alg:resDMD_angle}        & \cref{sec:residual_connect}       \\
            Principal angle decomposition &
            \cref{alg:pad} 
            & \cref{sec:pad} \\
            Error bounds on KMD/PFMD & \cref{alg:resDMD_KMD,alg:specrkhs_pfmd} & \cref{sec:comp_erbs} \\
            Expected errors of KMD/PFMD & \cref{alg:expected_error_all_l2,alg:expected_error_all_kernel} & \cref{sec:ee_gp_comp}\\
            \bottomrule[\thick pt]
        \end{tabular}
        \label{algorithms-table}
    \end{center}
\end{table}
\linespread{0.97}

The paper is organized as follows. \Cref{sec:data_systems} introduces prerequisites and describes related work, while \cref{sec:Koopman_invariance_measure} discusses subspace invariance and principal angle decompositions. \Cref{sec_kmd_eb} develops rigorous upper bounds for the Koopman and Perron--Frobenius mode decompositions, and \cref{sec:expected_errors} complements this with expected error computations. \Cref{sec:num_examples} presents pointwise error bounds and dictionary refinement applied to data from the Pluto--Charon system. \Cref{algorithms-table} summarizes the proposed algorithms, and \cref{fig:placeholder} illustrates the experiments considered throughout the paper. Code and datasets for reproducing the numerical results are available at \texttt{\url{https://github.com/GustavConradie1/TrustKoopman}}.

\section{Data-driven dynamical systems} \label{sec:data_systems}

We consider discrete-time dynamical systems with state space $\mathcal{X}$ (e.g., the combined position-momentum space of a particle system) and evolution
\begin{equation}\setlength\abovedisplayskip{6pt}\setlength\belowdisplayskip{6pt}
\label{eq:DynamicalSystem} 
x_{n+1} = F(x_n), \quad n= 0,1,2,\ldots.
\end{equation}
Here, $F:\mathcal{X}\rightarrow\mathcal{X}$ is an unknown function that generates highly nonlinear or chaotic dynamics and $x_n\in\mathcal{X}$ denotes the state of the system at step $n$. We assume that snapshot data of the form $\{x^{(m)},y^{(m)}=F(x^{(m)})\}_{m=1}^M$ are available from experiments or numerical simulations.
A central challenge is to reconstruct quantitative information about \cref{eq:DynamicalSystem} from such data.
Traditional geometric approaches, which generally rely on local linearization \cite{strogatz2018nonlinear}, often struggle in high-dimensional or data-driven settings \cite{budivsic2012applied,colbrook2024rigorous,kutz2016dynamic}.

\subsection{The Koopman operator}
\label{sec:prelims}

Koopman operator theory provides global linearizations of dynamical systems~\cite{mezic2005spectral,budivsic2012applied,brunton2021modern}.
The key idea is to lift the dynamics to an infinite-dimensional space by studying the evolution of observables of the states, i.e., functions $g:\mathcal{X}\to\mathbb{C}$, rather than the states themselves.
The Koopman operator associated with \cref{eq:DynamicalSystem} is the \textit{linear} operator $\mathcal{K}$ satisfying
\[\setlength\abovedisplayskip{6pt}\setlength\belowdisplayskip{6pt}
[\koop g](x)=g(F(x)),\quad x \in\mathcal{X}.\]
The linearity of $\koop$ enables the use of tools from spectral theory to study the dynamical system.
Koopman operators are typically defined on a Hilbert space $\mathcal{H}$ of observables. Two common choices are: 

\paragraph{Square-integrable functions} The choice of space $L^2(\mathcal{X},\omega)$ for a (possibly non-invariant) measure $\omega$ is classical in ergodic theory. The inner product is 
\[\setlength\abovedisplayskip{6pt}\setlength\belowdisplayskip{6pt}
\langle f,g\rangle_{L^2}=\int_{\mathcal{X}}{f(x)}\overline{g(x)}\dd \omega(x),\quad f,g\in L^2(\mathcal{X},\omega).\]
This is the setting in which Koopman and von Neumann first introduced $\koop$ \cite{koopman1931hamiltonian,koopman1932dynamical}.

\paragraph{Reproducing kernel Hilbert spaces (RKHS)} These spaces are often used for high-dimensional systems \cite{williams2015kernel}. An RKHS on $\mathcal{X}$ is a Hilbert space $\mathcal{H}$ of functions $g:\mathcal{X}\to\mathbb{C}$ such that, for every $x\in\mathcal{X}$, the point-evaluation functional $E_x:g\mapsto g(x)$ is bounded. By the Riesz representation theorem, there exists a kernel function $\mathfrak{K}_x\in\mathcal{H}$ at $x$ satisfying  $g(x)=\langle g,\mathfrak{K}_x\rangle_{\mathfrak{K}}$ for all $g\in\mathcal{H}$. This induces a conjugate symmetric, positive-definite kernel function $\mathfrak{K}:\mathcal{X}\times\mathcal{X}\rightarrow\mathbb{C}$ such that 
\[\setlength\abovedisplayskip{6pt}\setlength\belowdisplayskip{6pt}\mathfrak{K}(x,y)=\langle\mathfrak{K}_x,\mathfrak{K}_y\rangle_{\mathfrak{K}}=\mathfrak{K}_x(y),\quad x,y\in\mathcal{X},\]
which uniquely determines the RKHS \cite{aronszajn_theory_1950}.

A major advantage of RKHSs is that, unlike the $L^2$ norm, convergence in the RKHS norm implies pointwise convergence. As we shall see shortly, working on RKHSs allows us to avoid the large-data limit and the sampling assumptions required by $L^2$-based methods. Moreover, they handle high-dimensional systems more efficiently and provide flexibility in the choice of kernel \cite{boulle2025convergent}. However, the Koopman operator on $L^2$ spaces has closer connections to ergodic theory and statistical properties of the dynamics, such as autocorrelations \cite{mezic2005spectral}. $L^2$ spaces are also the natural setting for the case of measure-preserving dynamics \cite{colbrook2022mpedmd}. Whenever it is clear from context, we drop the notational subscript $L^2$ or $\mathfrak{K}$. Unless otherwise specified, we use the $2$-norm for vectors and matrices.

\subsection{Controlling projection errors} \label{sec_application_error}

Approximations of $\koop$ seldom provide certified error bounds compared to the true operator. We therefore focus on rigorous error bounds for Galerkin approximations of the Koopman operator, and show how these bounds yield quantitative measures of subspace invariance in \cref{sec:Koopman_invariance_measure} and forecast guarantees in \cref{sec_kmd_eb}.

\subsubsection{Galerkin methods}

\label{sec:galerkin}
The most prevalent method for approximating the Koopman operator in the $L^2$ setting is EDMD \cite{williams2015data}. 
Given a finite dictionary of observables $\{\psi_1,\ldots,\psi_{N}\}\subseteq L^2(\mathcal{X},\omega)$, EDMD constructs a matrix $\mathbf{K}\in\mathbb{C}^{N\times N}$ from snapshot data  $\{x^{(m)},y^{(m)}\}_{m=1}^M$ that approximates the projection of $\mathcal{K}$ onto the finite-dimensional subspace $\mathcal{V}_{{N}}=\mathrm{span}\{\psi_1,\ldots,\psi_{N}\}$. We define the vector-valued observable 
$\smash{\Psi(x)=\begin{bmatrix}\psi_1(x) & \cdots& \psi_{{N}}(x) \end{bmatrix}\in\mathbb{C}^{1\times {N}}}$
so that any observable $g\in \mathcal{V}_{N}$ can be expressed as $\smash{g(x)=\sum_{j=1}^{N}\psi_j(x)g_j=\Psi(x)\,\mathbf{g}}$ for some $\mathbf{g}\in\mathbb{C}^{N}$. We seek $\mathbf{K}\in\mathbb{C}^{N\times N}$ such that $\mathcal{K}[\Psi \mathbf{g}]\approx  \Psi[\mathbf{K} \mathbf{g}]
$ for $\mathbf{g}\in\mathbb{C}^{N}$. To find this matrix, we define
\begin{equation}\setlength\abovedisplayskip{6pt}\setlength\belowdisplayskip{6pt}
\begin{split}
\mathbf{\Psi}_X\coloneqq\begin{pmatrix}
\Psi(x^{(1)})\\
\vdots\\
\Psi(x^{(M)})
\end{pmatrix}\in\mathbb{C}^{M\times N},\quad
\mathbf{\Psi}_Y\coloneqq\begin{pmatrix}
\Psi(y^{(1)})\\
\vdots \\
\Psi(y^{(M)})
\end{pmatrix}\in\mathbb{C}^{M\times N}.
\label{PSI_defs}
\end{split}
\end{equation}
We assign a weight $w_m>0$ to each $x^{(m)}$, so that $\{(x^{(m)},w_m)\}_{m=1}^M$ can be viewed as a quadrature rule for integration with respect to $\omega$. Let $\mathbf{W}=\mathrm{diag}(w_1,\ldots,w_M)$ and
\[\setlength\abovedisplayskip{6pt}\setlength\belowdisplayskip{6pt}
\mathbf{G}{=}\mathbf{\Psi}_X^*\mathbf{W}\mathbf{\Psi}_X{=}\sum_{m=1}^{M} w_m \Psi(x^{(m)})^*\Psi(x^{(m)}),\,
\mathbf{A}{=}\mathbf{\Psi}_X^*\mathbf{W}\mathbf{\Psi}_Y{=}\sum_{m=1}^{M} w_m \Psi(x^{(m)})^*\Psi(y^{(m)}).
\]
If this quadrature approximation converges, then
\begin{equation}\setlength\abovedisplayskip{6pt}\setlength\belowdisplayskip{6pt}
\label{quad_convergence}
\lim_{M\rightarrow\infty}\mathbf{G}_{jk} = \langle \psi_k,\psi_j \rangle_{L^2}\quad \text{ and }\quad \lim_{M\rightarrow\infty}\mathbf{A}_{jk} = \langle \mathcal{K}\psi_k,\psi_j \rangle_{L^2}.
\end{equation}
There are typically three scenarios where \cref{quad_convergence} holds: 1) random sampling where the $\{{x}^{(m)}\}_{m=1}^M$ are drawn independently \cite{klus2015numerical,korda2018convergence}, 2) ergodic sampling where ${x}^{(m)}=F^{m-1}(x^{(1)})$ \cite{arbabi2017ergodic}, and 3) high-order quadrature \cite{colbrook2024rigorous}.
$\mathbf{G}$ and $\mathbf{A}$ respectively fulfill similar roles to the mass and stiffness matrices from finite element methods \cite{ern2004theory}. Assuming $\lim_{M\rightarrow\infty}\mathbf{G}$ is invertible (i.e., the dictionary is linearly independent), the convergence in \cref{quad_convergence} means that the matrix
\begin{equation}\label{eqn:K_l2_defn}\setlength\abovedisplayskip{6pt}\setlength\belowdisplayskip{6pt}
\mathbf{K}=\mathbf{G}^{-1}\mathbf{A}=(\mathbf{\Psi}_X^*\mathbf{W}\mathbf{\Psi}_X)^{-1}(\mathbf{\Psi}_X^*\mathbf{W}\mathbf{\Psi}_Y)=(\sqrt{\mathbf{W}}\mathbf{\Psi}_X)^{\dagger}\sqrt{\mathbf{W}}\mathbf{\Psi}_Y
\end{equation}
approaches a representation of the truncated operator $\mathcal{P}_{\mathcal{V}_{N}}\mathcal{K}\mathcal{P}_{\mathcal{V}_{N}}^*$ as $M\to\infty$. Here, $\mathcal{P}_{\mathcal{V}_{N}}$ denotes the orthogonal projection onto the subspace $\mathcal{V}_{N}$ and $\mathbf{M}^{\dagger}\in\mathbb{C}^{m\times n}$ denotes the pseudoinverse of $\mathbf{M}\in\mathbb{C}^{n\times m}$.

\paragraph{Koopman mode decomposition}
Given an observable $g\in \mathcal{V}_N$, we may expand it as
$g=\Psi\mathbf{g} = \Psi \mathbf{V}\left(\mathbf{V}^{-1}\mathbf{g}\right)$, where $\mathbf{V}$ is the matrix of eigenvectors of $\mathbf{K}$ with eigenvalues $\{\lambda_j\}_{j=1}^N$ (we assume that $\mathbf{K}$ is diagonalizable). This expansion is known as the Koopman mode decomposition. If $g\in L^2(\mathcal{X},\omega)\setminus \mathcal{V}_N$, we typically take the least squares approximation to $g$ in $\mathcal{V}_N$. Since $\mathbf{K}\mathbf{V}=\mathbf{V}\mathbf{\Lambda}$, where $\mathbf{\Lambda}=\mathrm{diag}(\lambda_1,\dots,\lambda_N)$, this provides a simple expression for the evolution of the observable under the action of the Koopman operator. Provided that $\koop$ is bounded, the KMD converges as the size of the dictionary $N\rightarrow\infty$~\cite{korda2018convergence}. In particular, the large-data limit of $\mathbf{K}$ converges to $\koop$ in the strong operator topology, i.e., for all $g\in L^2(\mathcal{X},\omega)$ and $\mathbf{g}\in\mathbb{C}^{N}$ such that $\lim_{N\rightarrow\infty}\lim_{M\rightarrow\infty}\Psi\mathbf{g}=g$,
\begin{equation}\label{eqn:strong_op_conv}\setlength\abovedisplayskip{6pt}\setlength\belowdisplayskip{6pt}
\lim_{N\rightarrow\infty}\lim_{M\rightarrow\infty}\Psi\mathbf{K}\mathbf{g}=\koop g.
\end{equation}
In this paper, we bound the projection errors corresponding to finite $N$.

\paragraph{Kernelized EDMD}
When the state space is high-dimensional, EDMD suffers from the curse of dimensionality~\cite{bishop_pattern_2016}. The kernelized EDMD (kEDMD) algorithm~\cite{williams2015kernel} aims to mitigate this by exploiting the kernel trick~\cite{scholkopf_kernel_2000}, allowing efficient low-rank representations.
As noted in~\cite{boulle2025convergent}, kEDMD can be expressed as a Galerkin method by considering the action of $\koop^*$ on the kernel functions $\mathfrak{K}_x$. For all $x\in\mathcal{X}$ and $f\in\mathcal{H}$,
\[\setlength\abovedisplayskip{6pt}\setlength\belowdisplayskip{6pt}
    \langle f,\koop^*\mathfrak{K}_x\rangle_{\mathfrak{K}}=\langle\koop f,\mathfrak{K}_x\rangle_{\mathfrak{K}}=(\koop f)(x)=f(F(x))=\langle f,\mathfrak{K}_{F(x)}\rangle_{\mathfrak{K}}.
\]
Hence, $\koop^*\mathfrak{K}_x=\mathfrak{K}_{F(x)}$.
Setting $N=M$, one can construct a Galerkin approximation of $\koop^*$ from pairs of snapshot data $\{(x^{(n)},y^{(n)})\}_{n=1}^N$ using the dictionary of data-centred kernel function observables $\{\mathfrak{K}_{x^{(n)}}\}_{n=1}^N$. We define the matrices $\mathbf{G}^{\mathfrak{K}}$ and $\mathbf{A}^{\mathfrak{K}}$ by
\begin{equation}\setlength\abovedisplayskip{6pt}\setlength\belowdisplayskip{6pt}\label{eqn:GA_ker_defn}
    \mathbf{G}^{\mathfrak{K}}_{jk}=\mathfrak{K}(x^{(k)},x^{(j)})=\langle\mathfrak{K}_{x^{(k)}},\mathfrak{K}_{x^{(j)}}\rangle_{\mathfrak{K}},\; \mathbf{A}^{\mathfrak{K}}_{jk}=\mathfrak{K}(y^{(k)},x^{(j)})=\langle\koop^*\mathfrak{K}_{x^{(k)}},\mathfrak{K}_{x^{(j)}}\rangle_{\mathfrak{K}}.
\end{equation}
The superscript $\mathfrak{K}$ distinguishes these from the matrices in \cref{quad_convergence}, but will be dropped when the context is clear. Then, \begin{equation}\label{eqn:K_ker_defn}\setlength\abovedisplayskip{6pt}\setlength\belowdisplayskip{6pt}\mathbf{K}^{\mathfrak{K}}=(\mathbf{G}^{\mathfrak{K}})^{-1}\mathbf{A}^{\mathfrak{K}}\end{equation} is a Galerkin approximation to $\koop^*$ on the subspace $\mathcal{V}_N=\mathrm{span}\{\mathfrak{K}_{x^{(1)}},\dots,\mathfrak{K}_{x^{(N)}}\}$. Unlike the $L^2$-Galerkin approximation used in EDMD, the RKHS formulation does not require taking the large-data limit $M\to\infty$ to form the Galerkin matrices, and does not require that the data is sampled according to a convergent quadrature \cite{boulle2025convergent}.
It yields convergence to $\koop^*$ in the strong operator topology as $N\to\infty$. The kEDMD matrix \cite{williams2015kernel} corresponds to $(\mathbf{K}^{\mathfrak{K}})^\top$, and it is common to project onto a dominant eigenspace of $\mathbf{G}^{\mathfrak{K}}$. As in the $L^2$ case, in this paper we study the projection errors corresponding to finite $N$.

Approximations of $\koop^*$ can be used to study the forward dynamics of the system. For example, suppose that $\mathcal{X}\subset\mathbb{R}^d$ and set $g_i(x)=[x]_i$, the $i$th component of $x\in\mathcal{X}$, for $1\leq i\leq d$. Then
\begin{equation}
\label{eqn:pfmd_predictions}
\setlength\abovedisplayskip{6pt}\setlength\belowdisplayskip{6pt}
    [x_n]_i=(\koop^ng_i)(x_0)=\langle \koop^ng_i,\mathfrak{K}_{x_0}\rangle_{\mathfrak{K}}=\langle g_i,(\koop^*)^n\mathfrak{K}_{x_0}\rangle_{\mathfrak{K}}.
\end{equation}
Thus, by applying $\koop^*$ to the single observable $\mathfrak{K}_{x_0}$, which can be approximated using the basis $\mathfrak{K}_{x^{(j)}}$ for $j=1,\dots,N$, we can predict all $d$ observables $g_i$. Decomposing $\mathbf{K}^{\mathfrak{K}}$ in terms of its eigenvalues yields the analogue of the KMD, the Perron--Frobenius mode decomposition (PFMD).

\subsubsection{Computing spectra}\label{sec:spectral_convergence}

The convergence in \cref{quad_convergence} implies that the EDMD eigenvalues approach the spectrum of $\mathcal{P}_{\mathcal{V}_{N}}\mathcal{K}\mathcal{P}_{\mathcal{V}_{N}}^*$ as $M\rightarrow\infty$. However, as $N\to\infty$, despite the strong operator topology convergence \cref{eqn:strong_op_conv}, the spectrum of $\mathcal{P}_{\mathcal{V}_{N}}\mathcal{K}\mathcal{P}_{\mathcal{V}_{N}}^*$ need not converge to that of $\koop$~\cite{bottcher1983finite}, leading to spectral pollution (persistent spurious eigenvalues) or spectral invisibility (missing spectral regions) \cite{colbrook2025introductory}. The same is true in the RKHS setting. It is therefore essential to verify candidate eigenpairs, which can be done using residual dynamic mode decomposition (ResDMD)~\cite{colbrook2024rigorous} in the $L^2$ setting or SpecRKHS \cite{boulle2025convergent} in the RKHS setting. 

Given $g\in\mathcal{V}_N$ and $\lambda\in\mathbb{C}$, the goal of ResDMD is to compute the infinite-dimensional residual $\|\koop g-\lambda g\|_{L^2}$.
To achieve this, define $\Lv=\Psiv_Y^*\Wv\Psiv_Y$, which, assuming the quadrature rule converges, approximates $\mathcal{K}^*\mathcal{K}$ in the sense that
\begin{equation}\setlength\abovedisplayskip{6pt}\setlength\belowdisplayskip{6pt}
    \label{quad_convergence2}
    \lim_{M\rightarrow\infty}\Lv_{jk} = \langle \mathcal{K}\psi_k,\mathcal{K}\psi_j \rangle_{L^2}.
\end{equation}
Hence,
$
    \|\koop g-\lambda g\|^2_{L^2}=\lim_{M\rightarrow\infty}\mathbf{g}^*(\mathbf{L}-\lambda \mathbf{A}^*-\overline{\lambda} \mathbf{A}+|\lambda|^2\mathbf{G})\mathbf{g}= \lim_{M\rightarrow\infty}\mathrm{res}(\lambda,g)^2.
$
The residual $\mathrm{res}(\lambda,g)$ measures the quality of $(\lambda,g)$. If $\|g\|_{L^2}=1$ and $\|(\mathcal{K}-\lambda I)g\|_{L^2}\leq\epsilon$, then $g$ is an $\epsilon$-pseudoeigenfunction. Such observables are dynamically relevant since
\[\setlength\abovedisplayskip{6pt}\setlength\belowdisplayskip{6pt}
\|\mathcal{K}^ng-\lambda^n g\|_{L^2}= \mathcal{O}(n\epsilon),\quad \text{for all }n\in\mathbb{N},
\]
so $\lambda$ represents an approximate coherent oscillation with corresponding growth or decay of $g$ over time.
With this residual in hand, we can build convergent algorithms for computing various spectral properties of $\koop$ \cite{colbrook2024rigorous}. Analogous to ResDMD, convergent spectral approximations of $\koop^*$ on an RKHS require introducing an additional matrix~\cite{colbrook_another_2024,boulle2025convergent}:
\begin{equation}\setlength\abovedisplayskip{6pt}\setlength\belowdisplayskip{6pt}\label{eqn:L_ker_defn}
    \mathbf{R}^{\mathfrak{K}}_{jk}=\mathfrak{K}(y^{(k)},y^{(j)})=\langle\mathfrak{K}_{y^{(k)}},\mathfrak{K}_{y^{(j)}}\rangle_{\mathfrak{K}}=\langle\koop^*\mathfrak{K}_{x^{(k)}},\koop^*\mathfrak{K}_{x^{(j)}}\rangle_{\mathfrak{K}},\quad 1\leq j,k\leq N.
\end{equation}
Just as ResDMD uses the matrix $\mathbf{L}$, SpecRKHS~\cite{boulle2025convergent} uses $\mathbf{R}^{\mathfrak{K}}$ to compute spectral properties of $\koop^*$ on an RKHS. Pseudoeigenfunctions of $\koop^*$ can be used to approximate $\mathfrak{K}_{x_0}$ in \cref{eqn:pfmd_predictions} to provide an analogous interpretation to the $L^2$ case.

\subsection{Related methods}\label{sec:existing_work}

Existing work has illuminated several aspects of data-driven Koopman analysis, but the corresponding validation theory remains distributed across a few complementary directions. Broadly, the literature focuses on three issues: projection error from the choice of approximation space, estimation and numerical error from finite and noisy data, and diagnostics for how nearly invariant a learned feature space is under the true dynamics. These developments are valuable, but they are often tied to specific dictionaries, kernels, or sampling settings. In this work, we develop general, computable, \emph{a posteriori} tools for assessing the reliability of a user-chosen Koopman approximation directly from snapshot data. We organize the discussion below around these three themes.

\paragraph{Koopman projection error bounds}

Several works derive error bounds and convergence rates for specific choices of dictionary or kernel. For example, \cite{zhang2023quantitative,kurdila2018koopman,yadav2025approximation} estimate projection errors for classical finite element spaces, Bernstein polynomials, and wavelets, respectively, while \cite{kohne_linfty-error_2024} treats the kernelized setting and proves error bounds for kEDMD with Wendland kernels. By contrast, our framework allows the user to choose an arbitrary dictionary and then \textit{verify} accuracy a posteriori using computable error bounds. Moreover, by computing principal observables of the Koopman operator, we identify which observables are predicted more or less reliably and obtain tailored, sharper bounds, rather than estimates that depend only on class membership. All our error bounds are fully explicit and computable.

\paragraph{Estimation and numerical error}

In EDMD, estimation error arises from the use of a quadrature rule, which converges in the large-data limit~\cite{colbrook2024rigorous}, so classical convergence-rate results apply. By contrast, \cite{boulle2025convergent} shows that kEDMD does not require a large-data limit, and analyses the case of noisy data. \cite{dawson2016characterizing, hemati2017biasing} both propose adjustments to DMD to reduce bias in the presence of noise. Kernel-based methods and convergence rates for Hilbert--Schmidt Koopman operators are analyzed in~\cite{klus2020kernel,philipp_error_2024}, while \cite{nuske2023finite} derives probabilistic bounds for truncated Koopman generators of SDEs and nonlinear control systems under i.i.d. and ergodic sampling; in these works, projection error is not considered. Related bounds for parabolic PDEs (again after truncation) are given in~\cite{lu2020prediction}. In~\cite{colbrook_beyond_2024}, ResDMD is extended to stochastic systems, with a joint analysis of projection error, estimation error, and their variances. \cite{drmavc2023data,Drmac2020} provide a numerical linear algebra perspective on numerical errors in Koopman approximations.

\paragraph{Subspace invariance}

To measure subspace invariance, we compute principal angles between the chosen dictionary and its image under the Koopman operator. In statistics, principal angles between subspaces are closely related to canonical correlation analysis (CCA)~\cite{hardoon2004canonical}. CCA has been used for stochastic Koopman operators in~\cite{wu2020variational} to develop a variational framework for Markov processes (VAMP), and in~\cite{mardt2018vampnets} this approach was applied to protein folding with neural-network parametrized observables. Principal angles are also connected to coherent sets over finite time horizons~\cite{froyland2013analytic}; see~\cite{banisch2017understanding} for diffusion-map techniques and~\cite{klus2019kernel} for links with kernel CCA. The largest principal angle was theoretically related to the invariance proximity measure in~\cite{haseli2023invariance}.

\paragraph{Other data-driven techniques}

Error bounds are a central component of reduced-order modeling. Reduced basis methods~\cite{hesthaven2022reduced}, widely used for parametrized PDEs, are typically equipped with a posteriori estimators and rigorous error bounds~\cite{rozza2008reduced,hesthaven2016certified,quarteroni2015reduced}; see~\cite{benner2015survey} for a survey of projection-based model reduction. Another major approach is proper orthogonal decomposition (POD)~\cite{berkooz1993proper}, for which error bounds have been developed~\cite{chaturantabut2010nonlinear,kunisch2002galerkin,kunisch2001galerkin}, as well as for variants such as balanced POD~\cite{moore1981principal,willcox2002balanced,rowley2005model} and Petrov--Galerkin POD~\cite{carlberg2017galerkin,otto2022optimizing,otto2023model}. Bagging and ensemble strategies have also been applied to data-driven identification methods such as DMD and SINDy; while these can yield empirical error estimates, they rely on subsampling to improve robustness rather than providing intrinsic, certified bounds~\cite{fasel2022ensemble,sashidhar2022bagging}.

\section{Principal angles and Koopman invariance}
\label{sec:Koopman_invariance_measure}

To control projection errors, we seek to quantify how invariant a subspace $\mathcal{V}$ of observables is under the Koopman operator $\mathcal{K}$. Applications include: 1) measuring the distance between $\mathrm{span}\{g\}$ and $\mathrm{span}\{\mathcal{K}g\}$ for an approximate eigenfunction $g$, 2) mitigating closure errors in (k)EDMD by selecting a dictionary that is nearly invariant under $\mathcal{K}$, and 3) model reduction.
In this section, invariance is quantified using principal angles between subspaces of observables. We compute these quantities for the infinite-dimensional Koopman and Perron--Frobenius operators.

\paragraph{Principal angles and vectors} Let $S_1$ and $S_2$ be finite-dimensional subspaces of a Hilbert space $\mathcal{H}$ with
$q= \mathrm{dim}(S_2)\leq\mathrm{dim}(S_1)=p$. The principal angles $\{\theta_j\}_{j=1}^q\subset[0,\pi/2]$ and principal vectors $\{(u_j,v_j)\}_{j=1}^q\subset S_1\times S_2$ are defined by
\[\setlength\abovedisplayskip{6pt}\setlength\belowdisplayskip{6pt}
\theta_1=\min\left\{\arccos(|\langle u,v\rangle|):(u,v)\in S_1\times S_2, \|u\|=\|v\|=1\right\},
\]
with $|\langle u_1,v_1\rangle|=\cos(\theta_1)$, and for $2\leq j\leq q$,
\begin{align*}\setlength\abovedisplayskip{6pt}\setlength\belowdisplayskip{6pt}
\theta_j=\min\left\{\arccos(|\langle u,v\rangle|):(u,v)\in S_1\times S_2,u\perp u_k,v\perp v_k, \, 1\leq k\leq j-1, \right.\\
\left.\|u\|=\|v\|=1\right\},\quad 
|\langle u_j,v_j\rangle|=\cos(\theta_j),\quad \|u_j\|=\|v_j\|=1.
\end{align*}
The principal vectors need not be uniquely defined (even up to phase), but the principal angles always are; without loss of generality, we select $\langle u_j,v_j\rangle\in\mathbb{R}_+$. Principal angles and vectors can be obtained through the singular value decomposition of a covariance matrix, implemented in the \texttt{subspacea}~\cite{knyazevcodel} MATLAB package.

\subsection{Computing the principal angles and vectors}\label{sec:princ_ang_vec_comp}

To measure Koopman subspace invariance, we seek to compute the subspace angles between $S_1=\mathcal{V}\subset L^2(\mathcal{X},\omega)$ and $S_2=\mathcal{K}\mathcal{V}$. The subspace $\mathcal{V}\subset\mathcal{V}_N$ is finite-dimensional and spanned by the columns of $\Psi \mathbf{B}$, where $\mathbf{B}\in\mathbb{C}^{N\times n}$ is a subspace selection matrix with $n\le N$ columns. For instance, $\mathcal{V}$ may coincide with the full space $\mathcal{V}_N$ introduced in \cref{sec:galerkin} or with the span of a candidate eigenvector. To compute the principal angles and vectors using \texttt{subspacea}, we require representations of $\mathcal{V}$ and $\mathcal{K}\mathcal{V}$ in $L^2$-orthonormal bases. Note that $\mathcal{K}\mathcal{V}$ is defined using the infinite-dimensional Koopman operator $\mathcal{K}$, not a finite-dimensional approximation.

Since we are only interested in the subspace spanned by the columns of $\Psi\mathbf{B}$, we compress the matrices defined in \cref{quad_convergence,quad_convergence2}, and define
\[\setlength\abovedisplayskip{6pt}\setlength\belowdisplayskip{6pt}
\mathbf{G}_\mathcal{V}=\mathbf{B}^*\mathbf{G}\mathbf{B}, \quad \mathbf{A}_\mathcal{V}=\mathbf{B}^*\mathbf{A}\mathbf{B},\quad \mathbf{L}_\mathcal{V}=\mathbf{B}^*\mathbf{L}\mathbf{B}.
\]
This is equivalent to re-running EDMD with the dictionary $
g_1(x) = \Psi(x)\mathbf{B}(:,1)$, $\dots$, $g_{n}(x) = \Psi(x)\mathbf{B}(:,n)
$; we do not assume that the $g_i$'s are linearly independent. We next define the self-adjoint positive semi-definite matrix
\[\setlength\abovedisplayskip{6pt}\setlength\belowdisplayskip{6pt}
\mathbf{J}_\mathcal{V}=\begin{pmatrix}
\mathbf{G}_\mathcal{V} & \mathbf{A}_\mathcal{V}\\
\mathbf{A}_\mathcal{V}^* & \mathbf{L}_\mathcal{V}
\end{pmatrix}\in\mathbb{C}^{2n\times 2n},
\]
which is the mass matrix of the observables $\{f_1,\ldots,f_{2n}\}\coloneqq\{g_1,\ldots,g_n,\mathcal{K}g_1,\ldots,\mathcal{K}g_n\}$ with respect to the $L^2$-inner product. We compute the eigendecomposition
$\mathbf{J}_\mathcal{V}=\mathbf{U}\mathbf{D}\mathbf{U}^*$, where $\mathbf{U}$ is a unitary matrix whose columns, $
\xi_k=\sum_{j=1}^{2n} \mathbf{U}_{jk}f_j$, are the eigenvectors of $\mathbf{J}_\mathcal{V}$ and $\mathbf{D}$ is a diagonal matrix. Then,
\[
\setlength\abovedisplayskip{6pt}\setlength\belowdisplayskip{6pt}
\langle \xi_i,\xi_j\rangle =\sum_{k,l=1}^{2n}{\mathbf{U}_{ki}}\overline{\mathbf{U}_{lj}}(\mathbf{J}_{\mathcal{V}})_{lk}=[\mathbf{U}^*\mathbf{J}_{\mathcal{V}}\mathbf{U}]_{ji}=\mathbf{D}_{ji}.
\]
Hence, the set of vectors $\{\mathbf{D}_{jj}^{-1/2}\xi_j:\mathbf{D}_{j,j}>0\}$
is orthonormal in $L^2(\mathcal{X},\omega)$. In practice, there are errors arising from the finite amount of snapshot data and numerical round-off. For a cut-off tolerance $\epsilon_c\geq 0$, we define
\[\setlength\abovedisplayskip{6pt}\setlength\belowdisplayskip{6pt}
\mathcal{S}=\{\mathbf{D}_{jj}^{-1/2}\xi_j:\mathbf{D}_{jj}>\epsilon_c\}, \quad \mathcal{I}=\{j:\mathbf{D}_{jj}>\epsilon_c,\,j=1,\ldots,2n\},
\]
and set
\[\setlength\abovedisplayskip{6pt}\setlength\belowdisplayskip{6pt}
\mathbf{C}_1=\sqrt{\mathbf{D}(\mathcal{I},\mathcal{I})}\left(\mathbf{U}(1:n,\mathcal{I})\right)^*,\quad \mathbf{C}_2=\sqrt{\mathbf{D}(\mathcal{I},\mathcal{I})}\left(\mathbf{U}(n+1:2n,\mathcal{I})\right)^*.
\]
The choice of cut-off parameter can be guided by the condition in \cref{thm:largedataconv} below and the numerical precision in use. The spans of the columns of $\mathbf{C}_1$ and $\mathbf{C}_2$ represent $\mathcal{V}$ and $\mathcal{K}\mathcal{V}$, respectively, with respect to the orthonormal basis $\mathcal{S}$.

With $\mathbf{C}_1$ and $\mathbf{C}_2$ in hand, we may compute the principal angles and vectors using \texttt{subspacea}. After obtaining principal angles $\{\theta_j\}_{j=1}^q$ and vectors $\mathbf{U}_1$, $\mathbf{U}_2$, we convert back to the original dictionary via 
\begin{equation}\setlength\abovedisplayskip{6pt}\setlength\belowdisplayskip{6pt}\label{dict_conversion_formula}
\widehat{\textbf{U}}=\mathbf{U}(:,\mathcal{I})(\mathbf{D}(\mathcal{I},\mathcal{I}))^{-1/2}\textbf{U}_1,\quad \widehat{\textbf{V}}=\mathbf{U}(:,\mathcal{I})(\mathbf{D}(\mathcal{I},\mathcal{I}))^{-1/2}\textbf{U}_2.
\end{equation}
Then, the principal observables for $j=1,\dots,q$ are given by
\begin{equation}\setlength\abovedisplayskip{6pt}\setlength\belowdisplayskip{6pt}\label{principal_obs_defn}
u_j(x)=[\Psi(x) \mathbf{B}, [\mathcal{K}\Psi](x) \mathbf{B} ]\widehat{\textbf{U}}(:,j),\quad v_j(x)=[\Psi(x) \mathbf{B}, [\mathcal{K}\Psi](x) \mathbf{B} ]\widehat{\textbf{V}}(:,j).
\end{equation}
\cref{alg:resDMD_angle} summarizes our procedure.

\begin{algorithm}[t]
\textbf{Input:} Matrices $\mathbf{G}$, $\mathbf{A}$ and $\mathbf{L}$ in \cref{quad_convergence,quad_convergence2} for dictionary $\{\psi_j\}_{j=1}^{N}$,  subspace selection matrix $\mathbf{B}\in\mathbb{C}^{N\times n}$, cut-off tolerance $\epsilon_c\geq 0$. For RKHS, use instead the matrices $\mathbf{G}$, $\mathbf{A}$ and $\mathbf{R}$ (instead of $\mathbf{L}$) from \cref{eqn:GA_ker_defn,eqn:L_ker_defn}.\\
\begin{algorithmic}[1]
\STATE Compress the matrices to form
$
\mathbf{G}_\mathcal{V}=\mathbf{B}^*\mathbf{G}\mathbf{B}$, $\mathbf{A}_\mathcal{V}=\mathbf{B}^*\mathbf{A}\mathbf{B}$, and  $\mathbf{L}_\mathcal{V}=\mathbf{B}^*\mathbf{L}\mathbf{B}$.
\STATE Compute the matrix
\[\setlength\abovedisplayskip{6pt}\setlength\belowdisplayskip{6pt}
\mathbf{J}_\mathcal{V}=\begin{pmatrix}
\mathbf{G}_\mathcal{V} & \mathbf{A}_\mathcal{V}\\
\mathbf{A}_\mathcal{V}^* & \mathbf{L}_\mathcal{V}
\end{pmatrix}\in\mathbb{C}^{2n\times 2n},
\]
and its eigendecomposition $\mathbf{J}_\mathcal{V}=\mathbf{U}\mathbf{D}\mathbf{U}^*.$
\STATE Set
$
\mathcal{I}=\{j:\mathbf{D}_{jj}>\epsilon_c,\,j=1,\ldots,2n\},
$
and compute
\[\setlength\abovedisplayskip{2pt}\setlength\belowdisplayskip{0pt}
\mathbf{C}_1=\sqrt{\mathbf{D}(\mathcal{I},\mathcal{I})}\left(\mathbf{U}(1:n,\mathcal{I})\right)^*,\quad \mathbf{C}_2=\sqrt{\mathbf{D}(\mathcal{I},\mathcal{I})}\left(\mathbf{U}(n+1:2n,\mathcal{I})\right)^*.
\]
\STATE Compute the principal angles and observables
\[\setlength\abovedisplayskip{6pt}\setlength\belowdisplayskip{6pt}
[\{\theta_j\}_{j=1}^q,\textbf{U}_1,\textbf{U}_2] = \texttt{subspacea}(\mathbf{C}_1,\mathbf{C}_2), \quad q\leq n,
\]
and convert back to the original dictionary via \cref{dict_conversion_formula,principal_obs_defn}.
\end{algorithmic} 
\textbf{Output:} Principal angles $\{\theta_j\}_{j=1}^q$ and principal observables $\{(u_j,v_j)\}_{j=1}^q$.
\caption{Principal angles and observables between $\mathcal{V}$ and $\mathcal{K}\mathcal{V}$.}\label{alg:resDMD_angle}
\end{algorithm}

In the RKHS setting, we want to compute the subspace angles between $S_1=\mathcal{V}\subset\mathcal{H}$ and $S_2=\koop^*\mathcal{V}$. To do so, we simply run \cref{alg:resDMD_angle} using the matrices from \cref{eqn:GA_ker_defn,eqn:L_ker_defn} and the dictionary of observables $\{\mathfrak{K}_{x^{(n)}}\}_{n=1}^N$. For example, if $\{x^{(n)}\}_{n=1}^N$ represents a single trajectory, then $\mathfrak{K}_{x^{(n+1)}}=\koop^*\mathfrak{K}_{x^{(n)}}$ for all $n$ so
\[\setlength\abovedisplayskip{6pt}\setlength\belowdisplayskip{6pt}
\mathcal{V}=\mathrm{span}\{\mathfrak{K}_{x^{(1)}},\dots,(\koop^*)^{N-1}\mathfrak{K}_{x^{(1)}}\},\text{ and } \koop^*\mathcal{V}=\mathrm{span}\{\koop^*\mathfrak{K}_{x^{(1)}},\dots,(\koop^*)^N\mathfrak{K}_{x^{(1)}}\}.
\]
Hence, in this case, all principal angles except the last are zero. The same is true for $L^2$ if we use a Krylov subspace $\{g,\koop g,\dots,\koop^{N-1}\}$ as the dictionary.

\subsection{Convergence and connection to residuals}\label{sec:residual_connect}

The following theorem establishes the convergence of \cref{alg:resDMD_angle} in the large-data limit $M\to\infty$ in the $L^2$ setting. This result also extends to the convergence of subspaces spanned by principal vectors associated with equal principal angles; we omit the details for brevity.

\linespread{1}
\begin{theorem}[Large data convergence]\label{thm:largedataconv}
Let $\theta_j(M,\epsilon_c;\mathcal{V},\mathcal{K}\mathcal{V})$ denote the output of \cref{alg:resDMD_angle} for subspaces $\mathcal{V}$ and $\koop\mathcal{V}$ using $M$ snapshots and cut-off parameter $\epsilon_c$, and let $\lambda_1$ be the smallest positive eigenvalue of $\lim_{M\rightarrow\infty}\mathbf{J}_\mathcal{V}$. If $\epsilon_c<\lambda_1$, then
\[\setlength\abovedisplayskip{6pt}\setlength\belowdisplayskip{6pt}
\lim_{M\rightarrow\infty}\theta_j(M,\epsilon_c;\mathcal{V},\mathcal{K}\mathcal{V})=\theta_j(\mathcal{V},\mathcal{K}\mathcal{V}).
\]
\end{theorem}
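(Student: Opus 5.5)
The plan is to reduce the statement to a finite-dimensional perturbation argument for the Gram matrix $\mathbf{J}_\mathcal{V}$. Write $\mathbf{J}_\mathcal{V}^\infty=\lim_{M\to\infty}\mathbf{J}_\mathcal{V}$. The limit exists entrywise by \cref{quad_convergence,quad_convergence2}, and it is exactly the $L^2$ Gram matrix of $\{f_1,\dots,f_{2n}\}=\{g_1,\dots,g_n,\mathcal{K}g_1,\dots,\mathcal{K}g_n\}$.

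The first step is to show that, in the limit, the algorithm computes the true angles. Take the eigendecomposition $\mathbf{J}_\mathcal{V}^\infty=\mathbf{U}_\infty\mathbf{D}_\infty\mathbf{U}_\infty^*$. Let $\mathcal{I}_\infty$ be the indices of its positive eigenvalues, so that $\{\mathbf{D}_{jj}^{-1/2}\xi_j\}_{j\in\mathcal{I}_\infty}$ is an orthonormal basis of $\mathrm{span}\{f_k\}=\mathcal{V}+\mathcal{K}\mathcal{V}$. Eigenvectors with zero eigenvalue give $\xi_j=0$ in $L^2$, so they contribute nothing. Expand each $g_k$ in this basis: the coefficient of $g_k$ against the $j$-th basis vector is $\mathbf{D}_{jj}^{-1/2}\langle g_k,\xi_j\rangle=\mathbf{D}_{jj}^{1/2}\overline{\mathbf{U}_{kj}}$. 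This shows that the columns of $\mathbf{C}_1^\infty$ and $\mathbf{C}_2^\infty$ are the coordinate vectors of $g_k$ and $\mathcal{K}g_k$. The map from coordinates to functions is an isometry, so the principal angles between $\mathrm{ran}\,\mathbf{C}_1^\infty$ and $\mathrm{ran}\,\mathbf{C}_2^\infty$ in $\mathbb{C}^{|\mathcal{I}_\infty|}$ equal $\theta_j(\mathcal{V},\mathcal{K}\mathcal{V})$. A cleaner, basis-free check uses $\mathbf{C}^*\mathbf{C}$: we have $(\mathbf{C}_1^\infty)^*\mathbf{C}_1^\infty=\mathbf{G}_\mathcal{V}$, $(\mathbf{C}_1^\infty)^*\mathbf{C}_2^\infty=\mathbf{A}_\mathcal{V}$, and so on, restricted to $\mathcal{I}_\infty$. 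This follows because dropping the zero eigenvalues leaves $\mathbf{J}=\mathbf{U}(:,\mathcal{I})\mathbf{D}(\mathcal{I},\mathcal{I})\mathbf{U}(:,\mathcal{I})^*$ unchanged.

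The second step handles the cut-off at finite $M$. Here $\mathbf{J}_\mathcal{V}$ is Hermitian and converges to $\mathbf{J}_\mathcal{V}^\infty$, so Weyl's inequality gives convergence of its eigenvalues. Because $0\le\epsilon_c<\lambda_1$, we can choose a gap $\delta$ such that for all large $M$ the same number $r=|\mathcal{I}_\infty|$ of eigenvalues exceeds $\epsilon_c$. Those eigenvalues stay near $[\lambda_1,\infty)$, and the rest stay near $0<\lambda_1$. This step is where the hypothesis $\epsilon_c<\lambda_1$ is used. If $\epsilon_c=0$ we need the discarded eigenvalues to be $\le 0$; this needs care, since small positive noise eigenvalues could be kept. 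I would handle it by noting that kept small eigenvalues contribute negligibly to $\mathbf{C}_i$, and they can also be treated by the projector argument below. Alternatively, one reads the statement as $\epsilon_c>0$ in the case where $\mathbf{J}_\mathcal{V}^\infty$ is singular.

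The third step, which I expect to be the main obstacle, is continuity of the angles despite the non-uniqueness of eigenvectors. Individual eigenvectors need not converge, but by Davis--Kahan the spectral projector $\mathbf{P}_M=\mathbf{U}(:,\mathcal{I})\mathbf{U}(:,\mathcal{I})^*$ converges to $\mathbf{P}_\infty$. The truncated matrix $\mathbf{U}(:,\mathcal{I})\mathbf{D}(\mathcal{I},\mathcal{I})\mathbf{U}(:,\mathcal{I})^*=\mathbf{P}_M\mathbf{J}_\mathcal{V}\mathbf{P}_M$ therefore converges to $\mathbf{J}_\mathcal{V}^\infty$. The principal angles depend only on the Gram matrices $\mathbf{C}_i^*\mathbf{C}_k$, which are exactly the blocks of this truncated matrix. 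Concretely, the cosines are the singular values of $\mathbf{G}_\mathcal{V}^{\dagger 1/2}\mathbf{A}_\mathcal{V}\mathbf{L}_\mathcal{V}^{\dagger 1/2}$ built from these blocks. Continuity of pseudoinverses needs constant rank, and the block ranks are stabilized by the eigenvalue gap; equivalently, orthogonal projectors onto $\mathrm{ran}\,\mathbf{C}_i$ vary continuously. The singular values of products of these projectors then converge by Weyl's inequality for singular values. Taking $\arccos$, which is continuous on $[0,1]$, yields convergence of each $\theta_j$.
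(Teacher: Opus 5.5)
Your outline is the paper's own: $\mathcal{I}$ is eventually constant, the data passed to \texttt{subspacea} converge, and the limit is identified with the exact-arithmetic, $\epsilon_c=0$ computation. Your Gram-matrix device is a real improvement on the paper. The angles depend only on $\mathbf{T}_M=\mathbf{U}(:,\mathcal{I})\mathbf{D}(\mathcal{I},\mathcal{I})\mathbf{U}(:,\mathcal{I})^*$, which converges by Davis--Kahan, whereas the $\mathbf{C}_j$ themselves need not converge when eigenvectors are non-unique; the paper skips this point.

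The genuine gap is the sentence ``the block ranks are stabilized by the eigenvalue gap''. The spectral gap only fixes $\mathrm{rank}\,\mathbf{T}_M=r=\dim(\mathcal{V}+\koop\mathcal{V})$. The diagonal blocks $\mathbf{C}_1^*\mathbf{C}_1\to\mathbf{G}_\mathcal{V}^\infty$ and $\mathbf{C}_2^*\mathbf{C}_2\to\mathbf{L}_\mathcal{V}^\infty$ (limits as $M\to\infty$) are only lower semicontinuous in rank. Since principal angles see ranges rather than magnitudes, a rank jump moves them by $O(1)$.

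Here is a concrete failure. Let $g_2=g_1$ in $L^2(\mathcal{X},\omega)$ (dependent $g_i$ are explicitly allowed), with $g_1,\koop g_1$ independent. Suppose the nodes do not see this relation, e.g.\ nodes converging to, but not lying in, $\mathrm{supp}\,\omega$, with $g_1\neq g_2$ off the support. Then $r=2$, and any $\epsilon_c\in(0,\lambda_1)$ keeps exactly two eigenvalues. Yet the two columns of $\mathbf{C}_1\in\mathbb{C}^{2\times 2}$ are generically independent, so $\mathrm{ran}\,\mathbf{C}_1=\mathbb{C}^2\supseteq\mathrm{ran}\,\mathbf{C}_2$. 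The algorithm therefore returns $\theta_1=0$ for all large $M$, while $\theta_1(\mathcal{V},\koop\mathcal{V})=\arccos\bigl(|\langle g_1,\koop g_1\rangle|/(\|g_1\|\,\|\koop g_1\|)\bigr)>0$. The paper's proof has the same hole, since it passes from convergence of the inputs of \texttt{subspacea} to convergence of its outputs without comment. Your argument, however, asserts the false step explicitly, at exactly the point you identified as the crux.

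Closing the gap needs an extra hypothesis. Two that work:
\begin{itemize}
\item[(a)] $\mathbf{G}_\mathcal{V}^\infty$ and $\mathbf{L}_\mathcal{V}^\infty$ are nonsingular, so $\mathbf{C}_1,\mathbf{C}_2$ have full column rank $n$ for large $M$.
\item[(b)] Every linear relation among $f_1,\dots,f_{2n}$ holding $\omega$-a.e.\ also holds at the nodes, i.e.\ $\ker\mathbf{J}_\mathcal{V}^\infty\subseteq\ker\mathbf{J}_\mathcal{V}$. This holds, e.g., for continuous $F$ and dictionary with nodes in $\mathrm{supp}\,\omega$.
\end{itemize}
Under (b), if $\mathbf{C}_1^\infty\mathbf{a}=0$ then $(\mathbf{a},\mathbf{0})\in\ker\mathbf{J}_\mathcal{V}^\infty\subseteq\ker\mathbf{J}_\mathcal{V}$. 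That vector is orthogonal to every kept eigenvector, so $\mathbf{C}_1\mathbf{a}=0$. Hence $\mathrm{rank}\,\mathbf{C}_1\le\mathrm{rank}\,\mathbf{C}_1^\infty$, with equality for large $M$ by lower semicontinuity. The same holds for $\mathbf{C}_2$, and your pseudoinverse argument then goes through.

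The same example disposes of your first remedy for $\epsilon_c=0$. Tiny kept eigenvalues do not ``contribute negligibly'': they can add whole directions to $\mathrm{ran}\,\mathbf{C}_i$ and change $q$. Your alternative, restricting to $\epsilon_c>0$, does make $\mathcal{I}$ eventually constant; the paper's claim to that effect fails at $\epsilon_c=0$ when $\mathbf{J}_\mathcal{V}^\infty$ is singular. But by the example above, $\epsilon_c>0$ is not sufficient on its own. Conversely, under (a) or (b) the case $\epsilon_c=0$ is harmless. Under (a), $\mathbf{T}_M=\mathbf{J}_\mathcal{V}$ and its diagonal blocks are eventually nonsingular. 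Under (b), the surplus eigenvalues are exactly zero.
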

\begin{proof}
Without loss of generality, we may assume that the eigenvalues of $\mathbf{J}_\mathcal{V}$ are listed in decreasing order. If $\epsilon_c<\lambda_1$, then $\mathcal{I}$ is eventually constant as $M\rightarrow\infty$. It follows that the matrices $\mathbf{C}_j$ in \cref{alg:resDMD_angle} have limits as $M\rightarrow\infty$. In particular, the quantities used in \texttt{subspacea} converge as $M\rightarrow\infty$ to the corresponding quantities as if \cref{alg:resDMD_angle} were executed with no quadrature error and $\epsilon_c=0$.
\end{proof}\linespread{0.97}

In the RKHS case, no analogue of \cref{thm:largedataconv} is required since no large-data limit is needed. It remains advisable, however, to include the cut-off parameter $\epsilon_c$ to mitigate data noise and numerical round-off.

The principal angles connect with the residuals considered in \cref{sec:spectral_convergence} and to error bounds in the KMD. If $\mathcal{V}=\mathrm{span}\{g\}$ is one-dimensional, then $\sin(\theta(\mathcal{V},\koop\mathcal{V}))$ is related to relative residuals:
\[\setlength\abovedisplayskip{6pt}\setlength\belowdisplayskip{6pt}
\sin(\theta(\mathcal{V},\mathcal{K}\mathcal{V}))=\inf_{v\in \mathcal{K}\mathcal{V}}\left\|v-\frac{g}{\|g\|}\right\|=\inf_{\lambda\in \mathbb{C}\backslash\{0\}}\left\|\frac{1}{\lambda\|g\|}\mathcal{K}g-\frac{g}{\|g\|}\right\|=\inf_{\lambda\in\mathbb{C}\backslash\{0\}} \frac{\left\|\mathcal{K}g-\lambda g\right\|}{|\lambda|\|g\|}.
\]
We may also use principal angles along with \cref{lemma:projection_inv} below to bound one-step errors in the KMD.
\begin{lemma}\label{lemma:projection_inv}
For $j=1,\dots,q$, $\mathcal{P}_{\mathcal{V}}^*\mathcal{P}_{\mathcal{V}}v_j=(\cos\theta_j)u_j$ and $\mathcal{P}_{\koop\mathcal{V}}^*\mathcal{P}_{\koop\mathcal{V}}u_j=(\cos\theta_j)v_j$. In addition, $\|\mathcal{P}_{\mathcal{V}}^*\mathcal{P}_{\mathcal{V}}v_j-v_j\|=\sin\theta_j$ and $\|\mathcal{P}_{\koop\mathcal{V}}^*\mathcal{P}_{\koop\mathcal{V}}u_j-u_j\|=\sin\theta_j$.
\end{lemma}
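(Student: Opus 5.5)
The plan is to use the standard variational characterization of principal vectors via the SVD of the cross-Gram operator between orthonormal bases of $\mathcal{V}$ and $\koop\mathcal{V}$. Write $S_1=\mathcal{V}$, $S_2=\koop\mathcal{V}$, and note that $\mathcal{P}_{\mathcal{V}}^*\mathcal{P}_{\mathcal{V}}$ is just the orthogonal projector onto $\mathcal{V}$ (viewed as a map on $\mathcal{H}$); write $P_1$ for it and $P_2$ for the projector onto $\koop\mathcal{V}$. Pick orthonormal bases $\{e_i\}$ of $S_1$ and $\{f_k\}$ of $S_2$ and form the matrix $\mathbf{M}_{ik}=\langle f_k,e_i\rangle$. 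It is classical (and is exactly what \texttt{subspacea} computes) that the principal angles satisfy $\cos\theta_j=\sigma_j(\mathbf{M})$. Moreover, the principal vectors can be taken as $u_j=\sum_i (\mathbf{Y})_{ij}e_i$ and $v_j=\sum_k (\mathbf{Z})_{kj}f_k$, where $\mathbf{M}=\mathbf{Y}\Sigma\mathbf{Z}^*$ is an SVD. I will first justify that the recursive min definition in the paper agrees with this SVD description, using the Courant--Fischer/max-min characterization of singular values. Here the orthogonality constraints $u\perp u_k$, $v\perp v_k$ restrict to complementary singular subspaces. This also shows that the phase normalization $\langle u_j,v_j\rangle\ge0$ is compatible.

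Given this, the first identity is a direct computation. Since $P_1 v_j=\sum_i\langle v_j,e_i\rangle e_i$ and
$\langle v_j,e_i\rangle=\sum_k \mathbf{Z}_{kj}\mathbf{M}_{ik}=(\mathbf{M}\mathbf{Z})_{ij}=\sigma_j\mathbf{Y}_{ij}$,
we get $P_1v_j=\sigma_j u_j=(\cos\theta_j)u_j$. Symmetrically, using $\mathbf{M}^*\mathbf{Y}=\mathbf{Z}\Sigma$ (with the real nonnegative $\sigma_j$), we get $P_2u_j=(\cos\theta_j)v_j$. For the norm statements, $P_1v_j$ and $v_j-P_1v_j$ are orthogonal, so Pythagoras gives $\|v_j-P_1v_j\|^2=\|v_j\|^2-\|P_1v_j\|^2=1-\cos^2\theta_j=\sin^2\theta_j$. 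Since $\theta_j\in[0,\pi/2]$, taking the nonnegative root gives $\sin\theta_j$. The same argument applies to $u_j$.

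The main obstacle is not the computation but the non-uniqueness of principal vectors flagged in the paper. If some $\theta_j$ is repeated, or if $\cos\theta_j=0$, the recursive definition allows choices that are not literally the SVD vectors of a single fixed decomposition. So I must argue that any admissible sequence $(u_j,v_j)$ arises from some SVD of $\mathbf{M}$.

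I would handle this without an SVD, by induction on $j$ with a perturbation argument. Suppose $P_1v_j$ had a component orthogonal to $u_j$ that still lies in $S_1\cap\{u_1,\dots,u_{j-1}\}^\perp$. Then replacing $u_j$ by the normalized $P_1v_j$ would strictly increase $|\langle u,v_j\rangle|$, contradicting minimality of $\theta_j$. The remaining worry is components of $P_1 v_j$ along earlier $u_k$ with $k<j$. For these we need $\langle v_j,u_k\rangle=0$, which follows from the inductive hypothesis $P_2u_k=\cos\theta_k\, v_k\perp v_j$.

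This gives $P_1v_j=\langle v_j,u_j\rangle u_j=\cos\theta_j\,u_j$ directly, and the symmetric argument gives $P_2u_j=\cos\theta_j\,v_j$. The case $\cos\theta_j=0$ is covered too: there the same maximality argument forces $P_1v_j=0$.
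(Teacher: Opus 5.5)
Your proposal is correct. Its decisive part, the joint induction, follows a genuinely different and somewhat more careful route than the paper's proof.

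The paper argues by best approximation. It sets $\lambda=\|\mathcal{P}_{\mathcal{V}}^*\mathcal{P}_{\mathcal{V}}v_j\|$ and uses $|\langle u,v_j\rangle|\le\cos\theta_j$ for every unit $u\in\mathcal{V}$ to get $\|\mathcal{P}_{\mathcal{V}}^*\mathcal{P}_{\mathcal{V}}v_j-v_j\|^2\ge\lambda^2-2\lambda\cos\theta_j+1$. It then notes that $\lambda u_j$ attains this lower bound, so uniqueness of the nearest point gives $\mathcal{P}_{\mathcal{V}}^*\mathcal{P}_{\mathcal{V}}v_j=\lambda u_j$. Minimizing the quadratic in $\lambda$ then gives $\lambda=\cos\theta_j$ and residual $\sin\theta_j$.

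You use the dual, maximization form instead. Over the admissible set, $u_j$ maximizes $|\langle u,v_j\rangle|$, and Cauchy--Schwarz forces the maximizer to be $P_1v_j/\|P_1v_j\|$. The phase normalization fixes the scalar, and Pythagoras gives the norm identity.

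The substantive difference is your induction. You show $\langle v_j,u_k\rangle=\langle v_j,P_2u_k\rangle=\cos\theta_k\langle v_j,v_k\rangle=0$, and symmetrically $\langle u_j,v_k\rangle=0$, for $k<j$. This places $P_1v_j$ in $\mathcal{V}\cap\{u_1,\dots,u_{j-1}\}^\perp$. The paper's inequality for all unit $u\in\mathcal{V}$ follows directly from the definition only for $u\perp u_1,\dots,u_{j-1}$. For $j\ge 2$ it therefore tacitly relies on exactly this orthogonality, which your argument supplies.

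The paper's version is shorter and gets both the identity and the norm from a single quadratic minimization. Yours handles arbitrary admissible, non-unique choices of principal vectors without gaps. Your SVD computation is also correct and ties the lemma to the vectors returned by \texttt{subspacea}. Once the induction is in place, though, the SVD route and the Courant--Fischer identification are redundant; the induction alone proves the lemma.
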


\begin{proof}
By symmetry, it suffices to prove the results involving $\mathcal{P}_{\mathcal{V}}^*\mathcal{P}_{\mathcal{V}}v_j$. Fix $j$, and let $\lambda = \|\mathcal{P}_{\mathcal{V}}^*\mathcal{P}_{\mathcal{V}}v_j\|$; note that $\langle\mathcal{P}_{\mathcal{V}}^*\mathcal{P}_{\mathcal{V}}v_j,v_j\rangle\geq 0$. By the definition of principal angles, for all $u\in\mathcal{V}$ with $\|u\|=1$, $\cos\theta_j\geq |\langle u,v_j\rangle|$, and so
\[\setlength\abovedisplayskip{6pt}\setlength\belowdisplayskip{6pt}
\|\mathcal{P}_{\mathcal{V}}^*\mathcal{P}_{\mathcal{V}}v_j-v_j\|^2=\lambda^2-2\langle\mathcal{P}_{\mathcal{V}}^*\mathcal{P}_{\mathcal{V}}v_j,v_j\rangle+1\geq \lambda^2-2\lambda\cos\theta_j+1.
\]
But $\langle u_j,v_j\rangle=\cos\theta_j$ so if $\mathcal{P}_{\mathcal{V}}^*\mathcal{P}_{\mathcal{V}}v_j=\lambda u_j$ we obtain exactly the right hand side, and so by uniqueness $\mathcal{P}_{\mathcal{V}}v_j=\lambda u_j$. It is clear that the right hand side is minimized when $\lambda=\cos\theta_j$ with minimum value $1-(\cos\theta_j)^2=(\sin\theta_j)^2$, establishing the result.
\end{proof}\linespread{0.97}

Then, given $g\in\mathcal{V}$, $\koop g=\sum_{i=1}^qg_iv_i$ for $g_i=\langle \koop g,v_i\rangle=\mathbf{g}^*\mathbf{L}\mathbf{v}_i$ (recall that the coefficients of ${v}_i$ are with respect to $\koop\mathcal{V}$ and not $\mathcal{V}$). Hence,
\[\setlength\abovedisplayskip{6pt}\setlength\belowdisplayskip{6pt}
\|\mathcal{P}_{\mathcal{V}}^*\mathcal{P}_{\mathcal{V}}\koop g-\koop g\|\leq\sum_{i=1}^q|g_i|\|\mathcal{P}_{\mathcal{V}}^*\mathcal{P}_{\mathcal{V}}v_i-v_i\|=\sum_{i=1}^q|\mathbf{g}^*\mathbf{L}\mathbf{v_i}|\sin\theta_i
\]
upper bounds the one-step error when applying the Koopman operator. We extend these ideas to provide tighter bounds on multi-step errors later in \cref{sec_kmd_eb}.

\subsection{Principal Angle Decomposition}\label{sec:pad}

To reduce computational costs and enforce model simplicity, approximations of the Koopman operator $\mathbf{K}\in\mathbb{C}^{N\times N}$ are often truncated. This is frequently done using a truncated singular value decomposition \cite{schmid2010dynamic,williams2015data,williams2015kernel}. This corresponds to projecting the Koopman operator onto the span of the $r<N$ eigenvectors of $\mathbf{G}$ with the largest eigenvalues. It produces an $r\times r$ rather than $N\times N$ Koopman matrix. Our algorithms lead to an alternative decomposition of the Koopman matrix that connects directly to properties of the infinite-dimensional Koopman operator, rather than just its finite-dimensional approximations. In \cref{sec:angles_numex} we shall see that this significantly improves performance.

Suppose we run EDMD with a dictionary of $N$ functions spanning a subspace $\mathcal{V}_N$. Then, applying \cref{alg:resDMD_angle} with $\mathbf{B}=\mathbf{I}_N$ we obtain principal angles $\{\theta_j\}_{j=1}^q$ and corresponding matrix of principal observables $\mathbf{U}_1$ (we will not use the coefficients associated with $v_j$ here). The columns of $\mathbf{U}_1$ are ordered in increasing order of angle $\theta_j$. We define $\mathbf{U}_1'=\mathbf{U}_1(1:q,:)+\mathbf{K}\mathbf{U}_1(q+1:2q,:)$ so that our representation of the principal observables is purely in terms of $\mathcal{V}_N$. Motivated by \cref{sec:residual_connect}, we expect that the modes corresponding to smaller angles are more accurately predicted by the approximate Koopman operator. Hence, given $r<N$ we define $\mathbf{U}=\mathbf{U}_1'(:,1:r)$, the matrix of the $r$ principal observables with the smallest angles. By restricting the action of the Koopman operator to these observables, we reduce the risk of spurious dynamics. In particular, we define the principal angle decomposition reduced matrices
\[\setlength\abovedisplayskip{6pt}\setlength\belowdisplayskip{6pt}
\mathbf{G}_{\mathrm{pad}}=\mathbf{U}^*\mathbf{G}\mathbf{U}=\mathbf{I}_r,\quad \mathbf{A}_{\mathrm{pad}}=\mathbf{U}^*\mathbf{A}\mathbf{U}=\mathbf{K}_{\mathrm{pad}},\quad \text{ and }\quad \mathbf{L}_{\mathrm{pad}}=\mathbf{U}^*\mathbf{L}\mathbf{U}.
\]
Then we can decompose an observable of interest in terms of the $r$ chosen principal observables and forecast its evolution using $\mathbf{K}_{\mathrm{pad}}$. This provides an analogue of the KMD using principal observables instead of eigenfunctions.
This algorithm is summarized in \cref{alg:pad}. 

\begin{algorithm}[t]
\textbf{Input:} Matrices $\mathbf{G}$, $\mathbf{A}$, $\mathbf{K}$ and $\mathbf{L}$ in \cref{quad_convergence,eqn:K_l2_defn,quad_convergence2} for dictionary $\{\psi_j\}_{j=1}^{N}$, cut-off tolerance $\epsilon_c\geq 0$, number of post-compression modes $r>0$. For RKHS, use instead the matrices $\mathbf{G}$, $\mathbf{A}$, $\mathbf{K}$ and $\mathbf{R}$ (instead of $\mathbf{L}$) from \cref{eqn:GA_ker_defn,eqn:K_ker_defn,eqn:L_ker_defn}.\\
\begin{algorithmic}[1]
\STATE Compute principal angles $\smash{\{\theta_j\}_{j=1}^q}$ and observables $\smash{\mathbf{U}_1}$ via \cref{alg:resDMD_angle} ($\mathbf{B}=\mathbf{I}_N$).\!\!\!\!\!
\STATE Define $\mathbf{U}_1'=\mathbf{U}_1(1:q,:)+\mathbf{K}\mathbf{U}_1(q+1:2q,:)$, and truncate to $\mathbf{U}=\mathbf{U}_1'(:,1:r).$
\STATE Construct the matrices $ \mathbf{A}_{\mathrm{pad}}=\mathbf{U}^*\mathbf{A}\mathbf{U}=\mathbf{K}_{\mathrm{pad}}$ and $ \mathbf{L}_{\mathrm{pad}}=\mathbf{U}^*\mathbf{L}\mathbf{U}.$
\end{algorithmic}
\textbf{Output: } The compressed matrices $\mathbf{A}_{\mathrm{pad}}$, $\mathbf{K}_{\mathrm{pad}}$ and $\mathbf{L}_{\mathrm{pad}}$.
\caption{Principal Angle Decomposition (PAD).}\label{alg:pad}
\end{algorithm}

\subsection{Numerical examples}\label{sec:angles_numex} We consider several illustrative examples of the algorithms developed in this section and their applications.

\subsubsection{Duffing oscillator}
Consider the unforced Duffing oscillator
\begin{equation}\setlength\abovedisplayskip{6pt}\setlength\belowdisplayskip{6pt}\label{duffing_eqn}
    \ddot{u}+0.05\dot{u}-u+u^3=0,\quad \text{and} \quad u(0),\,\dot{u}(0)\in\mathbb{R},
\end{equation}
with state $\smash{x(t)=(u(t),\,\dot{u}(t)
)^\top}$ for $t\geq 0$ and state space $\smash{\mathcal{X}=\mathbb{R}^2}$. This system models damped oscillations featuring in a wide variety of physical processes. We define the evolution map $F$ by $y=F(x)$, where $y$ is the solution of \cref{duffing_eqn} at time $\Delta t=1/4$ with initial condition $x$. 
 We generate snapshot data using MATLAB’s \texttt{ode45}, consisting of $50$ trajectories of $5$ seconds each from uniformly random initial conditions in $[-2,2]^2$, yielding $M=1000$ samples.
To run EDMD, we must choose a dictionary; we take $N=100$ tensor-product Chebyshev polynomials on $[-2.5,2.5]$ of degree at most $9$. To analyze the choice of dictionary, we apply \cref{alg:resDMD_angle}. The resulting principal angles are shown in \cref{fig:duffing_angles}. 
\begin{figure}[t]
    \centering
    \includegraphics[width=0.3\linewidth]{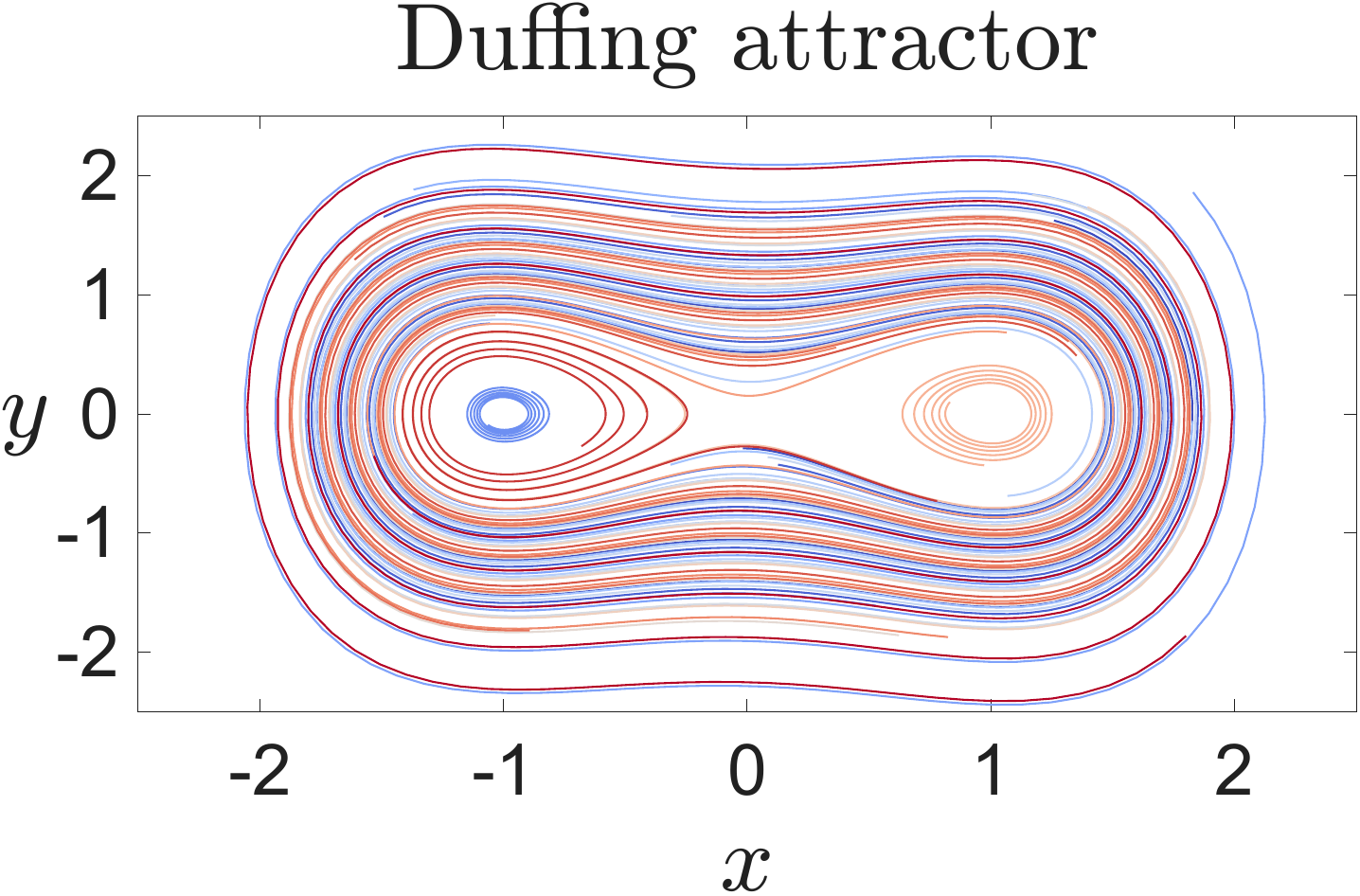}
    \includegraphics[height=0.2\linewidth]{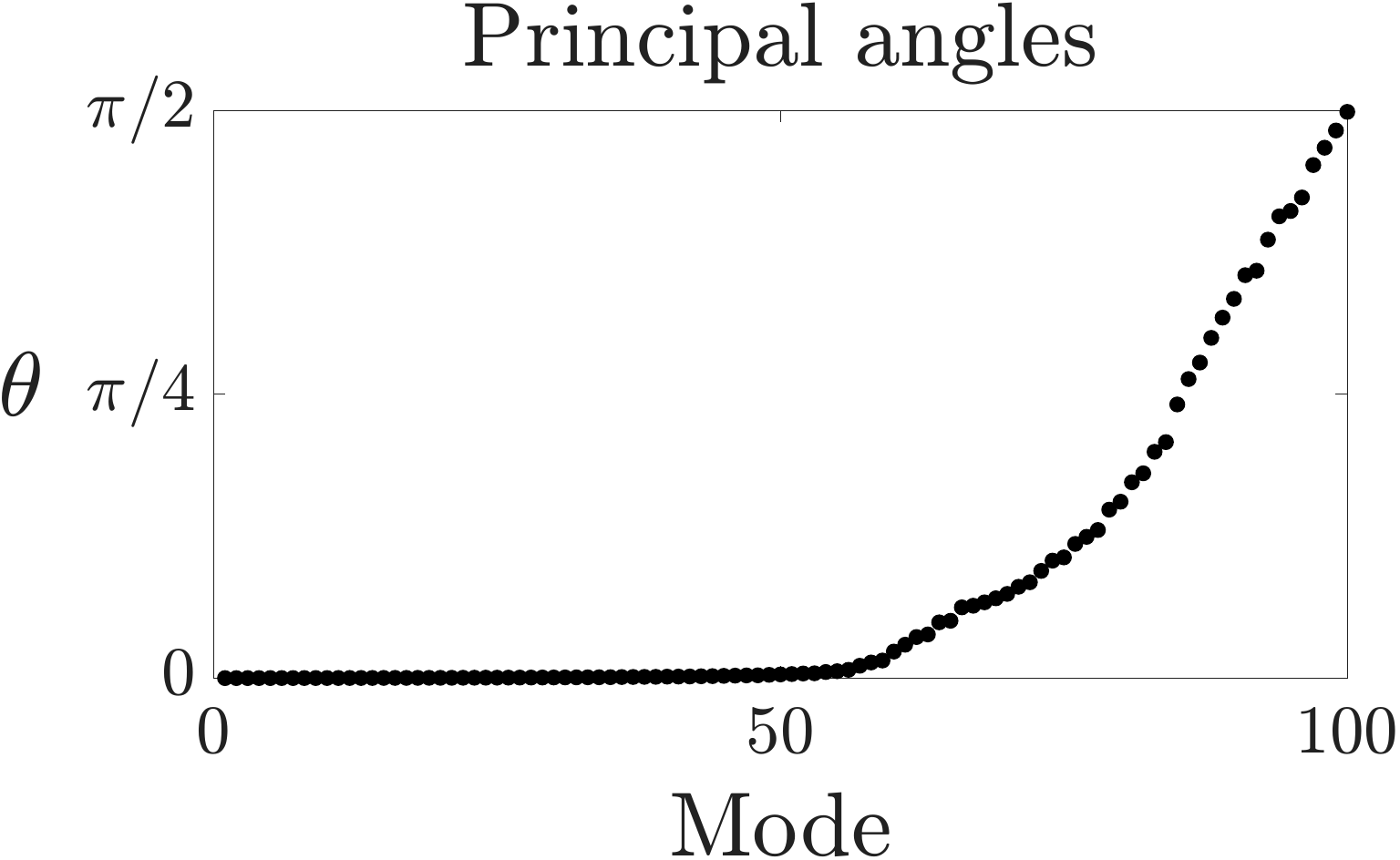}
    \includegraphics[height=0.2\linewidth]{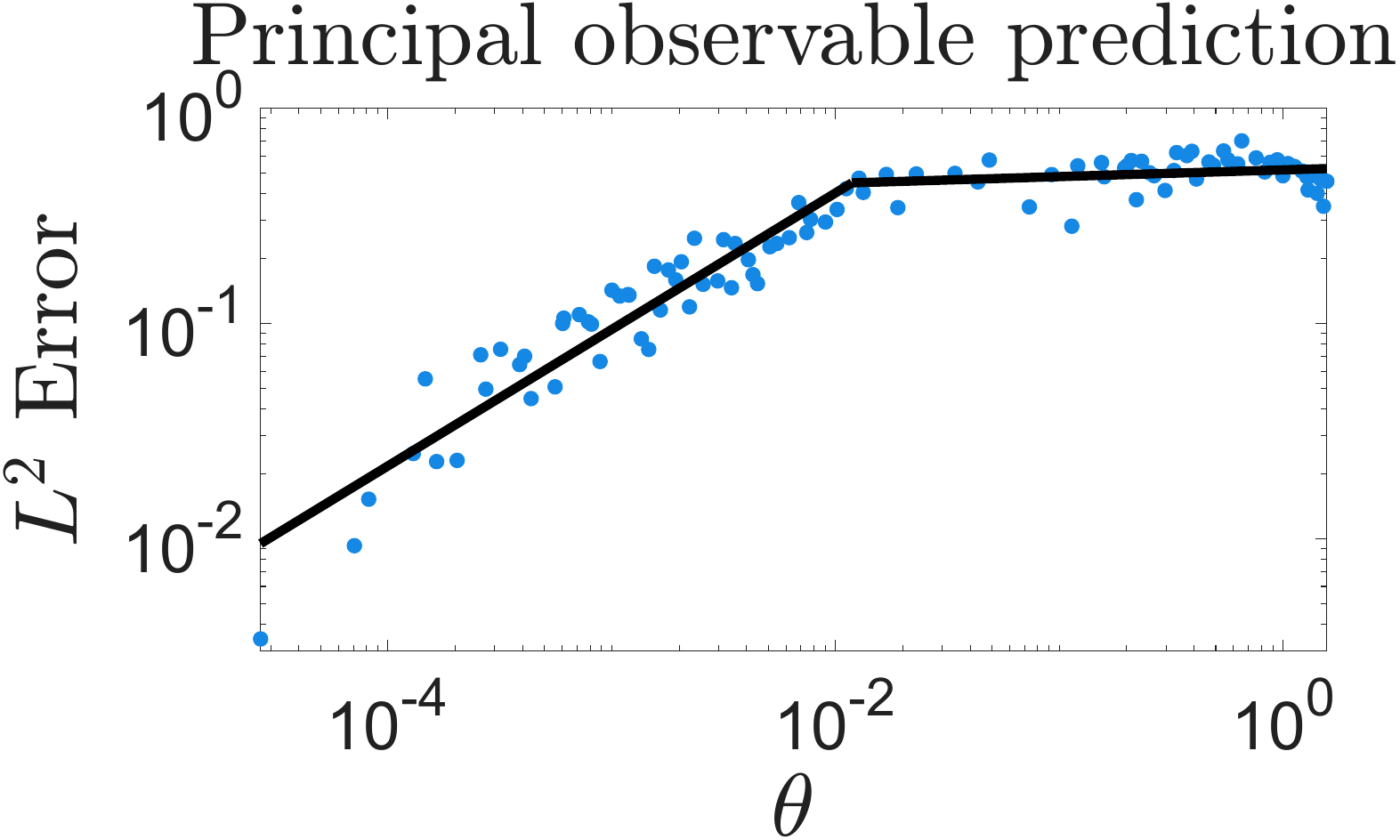}\vspace{0.2mm}
\vspace{-2mm}
    \caption{Left: The Duffing attractor. Middle: Principal angles on the Duffing oscillator for a finite subspace of Chebyshev polynomials. Right: One-step prediction errors for the corresponding principal observables. The solid line denotes a piecewise-linear best fit with a single breakpoint.}\vspace{-3mm}
    \label{fig:duffing_angles}
\end{figure}

The first  $60$ modes have small principal angles, with the angles increasing thereafter. To assess the impact of these angles on forecasting accuracy, for each principal observable
$u_j$ we compute $\|\koop u_j-\Psi\mathbf{K}\mathbf{u}_j\|$, which quantifies how well the projected operator approximates the true Koopman action. The results (right panel of \cref{fig:duffing_angles}) show a strong correlation between smaller principal angles and improved predictions. A piecewise-linear fit with a single breakpoint (using the method of~\cite{bogartz1968least}) indicates that beyond $\theta\approx0.01$ the error plateaus, motivating a distinction between “good” and “bad” observables.

\begin{figure}[t]
    \centering
    \includegraphics[height=0.3\linewidth]{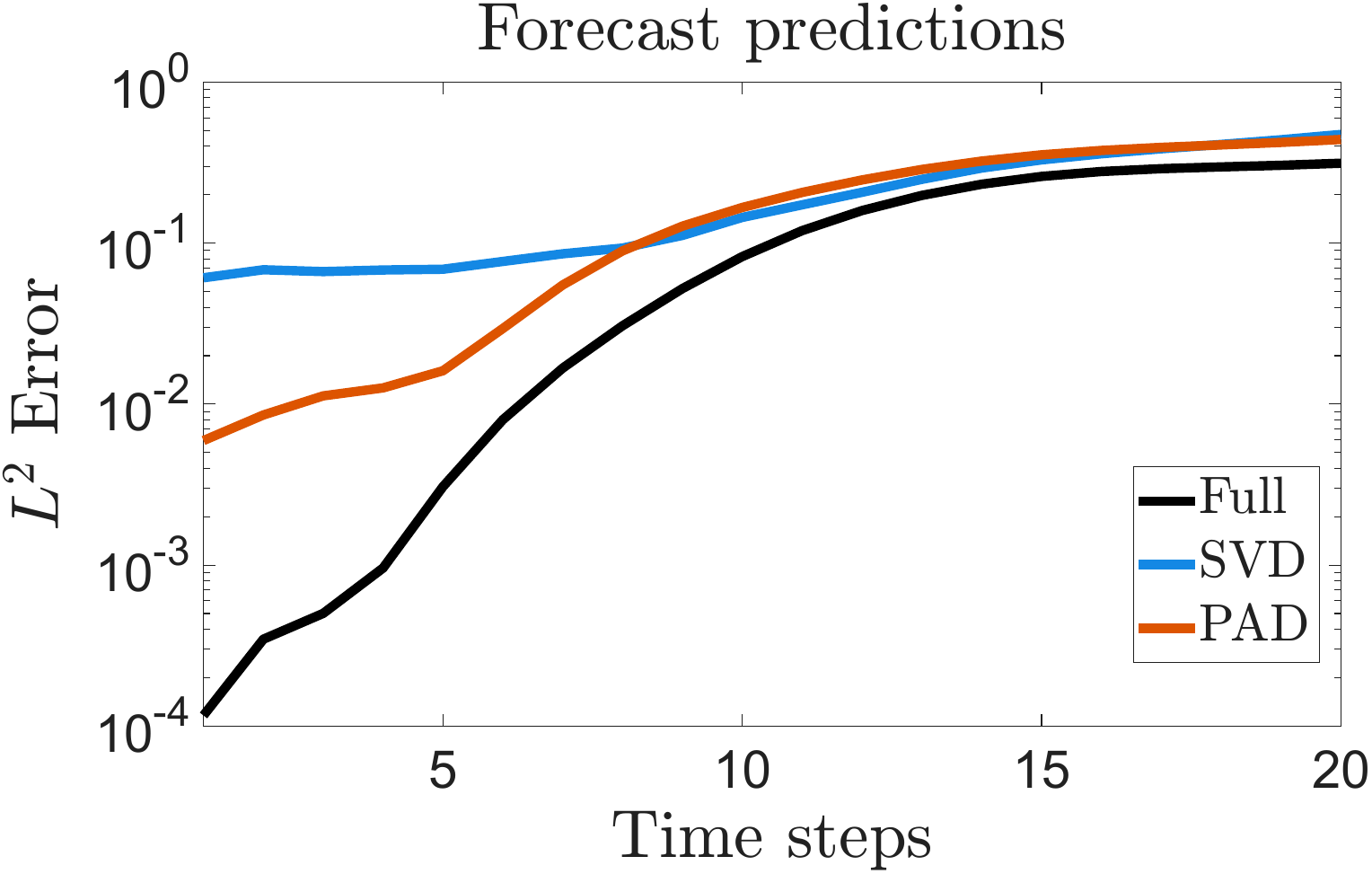}\hfill
        \includegraphics[height=0.3\linewidth]{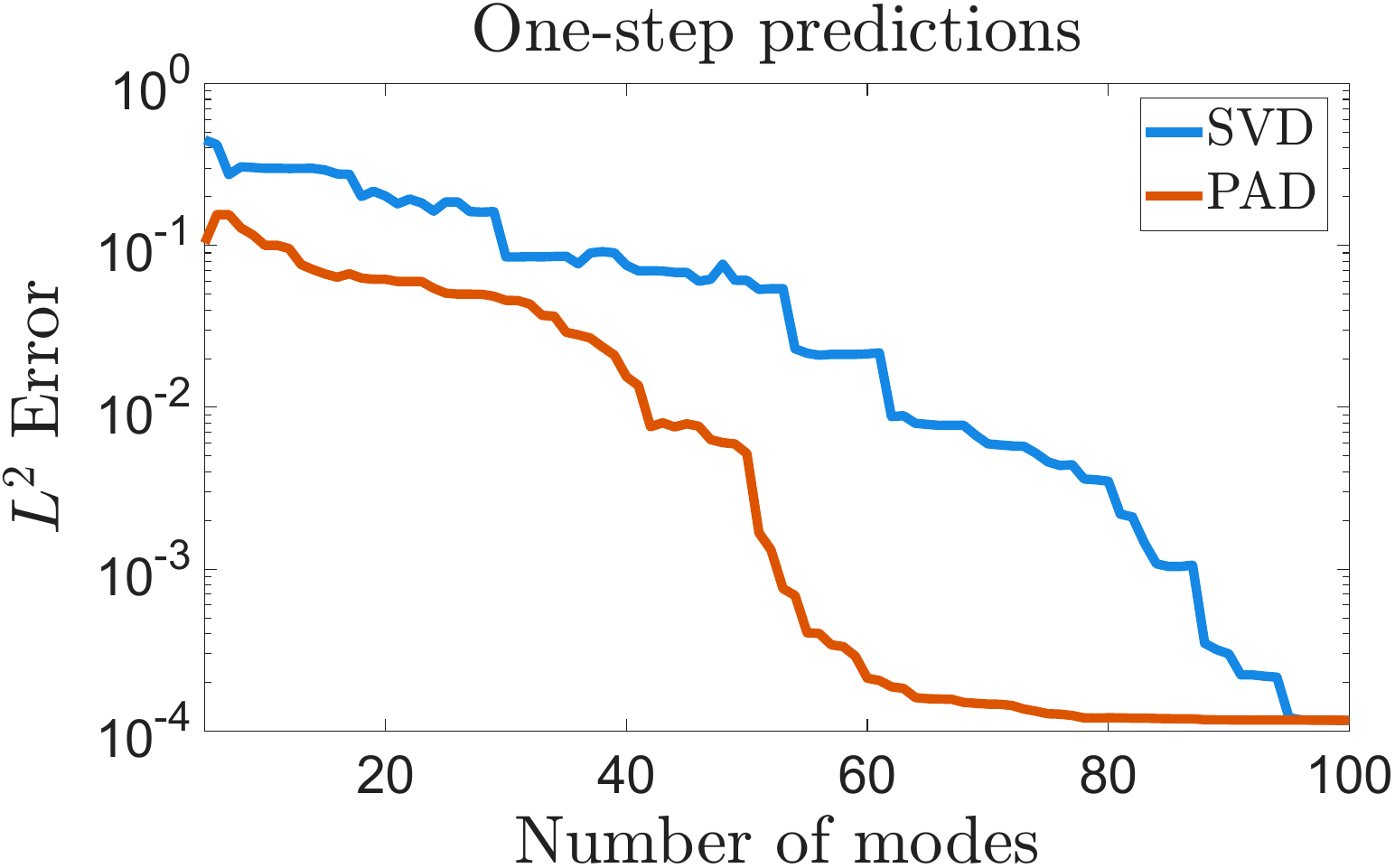}
    \caption{Left: Forecast errors for the full Koopman matrix on the Duffing oscillator versus the Koopman operator truncated using a SVD or PAD. Right: One-step prediction errors for Koopman operators truncated via PAD and SVD, shown as a function of the number of modes.}
    \label{fig:pad_svd_full_errors}
\end{figure}

Using this “good”/“bad” distinction, we set $r=\max\{s:\theta_s<0.01\}$ and apply \cref{alg:pad} to construct a reduced-order model $\mathbf{K}_{\mathrm{pad}}$. We compare its performance with that of an SVD-truncated Koopman operator using the same number of modes. \cref{fig:pad_svd_full_errors} (left) reports the prediction errors for the state observable $g(x)=x_1=u$ over $20$ time steps for the PAD truncation, the SVD truncation, and the full untruncated matrix. PAD outperforms SVD, particularly at short times, while their long-time performance is comparable. This is expected: the principal-angle construction aligns only the subspaces $\mathcal{V}$ and $\koop\mathcal{V}$.
To demonstrate that this behavior is not specific to a single choice of $r$, \cref{fig:pad_svd_full_errors} (right) shows the one-step prediction error as a function of the number of modes. PAD consistently achieves lower errors, and once the angle threshold $\theta=0.01$ is reached, additional modes yield little improvement, consistent with \cref{fig:duffing_angles}. By contrast, SVD truncation requires nearly the full-order model to reach comparable performance.

\subsubsection{Lorenz system}

As further illustration, we consider the Lorenz--63 system~\cite{lorenz1963deterministic}, a standard chaotic benchmark in data-driven dynamical systems:
\begin{equation}\setlength\abovedisplayskip{6pt}\setlength\belowdisplayskip{6pt}\label{lorenz_defn}\dot{u}_1=10(u_2-u_1),\quad \dot{u}_2=u_1(28-u_3)-u_2,\quad \dot{u}_3=u_1u_2-(8/3)u_3.
\end{equation}
For these parameters, almost all trajectories converge to the Lorenz attractor shown on the left of \cref{fig:pad_svd_no_modes}. We define the state as $x=(u_1,u_2,u_3)\in\mathbb{R}^3$ and the evolution map as the solution with timestep $\Delta t=0.1$. We restrict the dynamics to the Lorenz attractor; this yields a unitary Koopman operator with respect to the corresponding physical, or Sinai-Ruelle-Bowen, measure \cite{colbrook2025rigged,young2002srb}. For data, we generate a single long trajectory starting from a randomly selected initial condition using MATLAB's \texttt{ode45} function; we discard the initial transient before the trajectory enters the Lorenz attractor. We run EDMD using a dictionary of $100$ exponential radial basis functions (RBFs) $\psi_i(x)=e^{-s\|x-c_i\|}$ for randomly selected centres $\{c_i\}_{i=1}^N$ and scale factor $s$, and compare PAD and SVD truncations for predicting the observable $g(x)=x_1=u_1$; see \cref{fig:pad_svd_no_modes} for the results. As in the Duffing example, PAD outperforms SVD for short prediction horizons, achieving near-full accuracy with roughly half as many modes, whereas SVD requires nearly all modes to reach comparable performance.

\begin{figure}[t]
    \centering
      \includegraphics[height=0.2\linewidth]{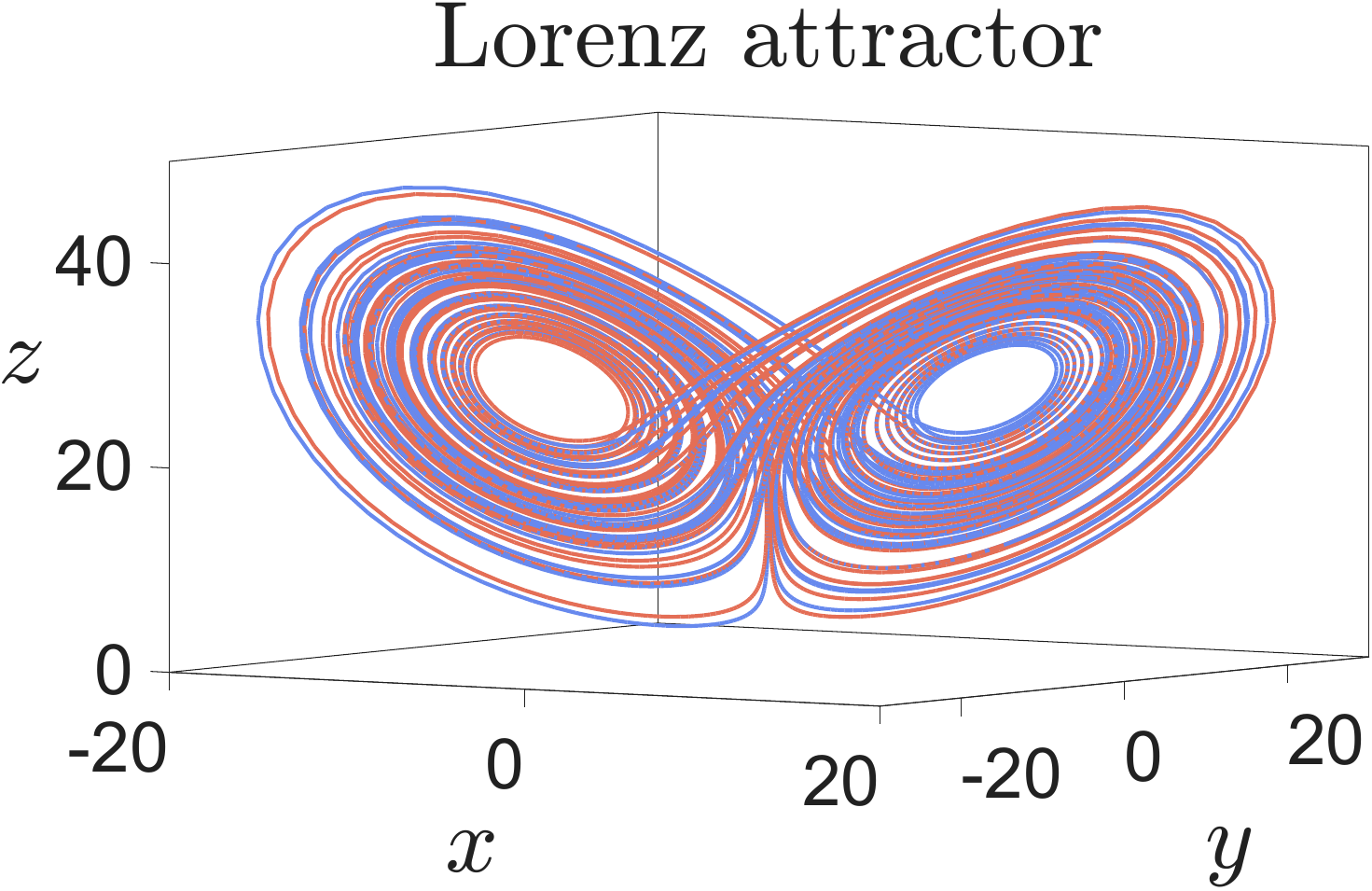}
      \includegraphics[height=0.2\linewidth]{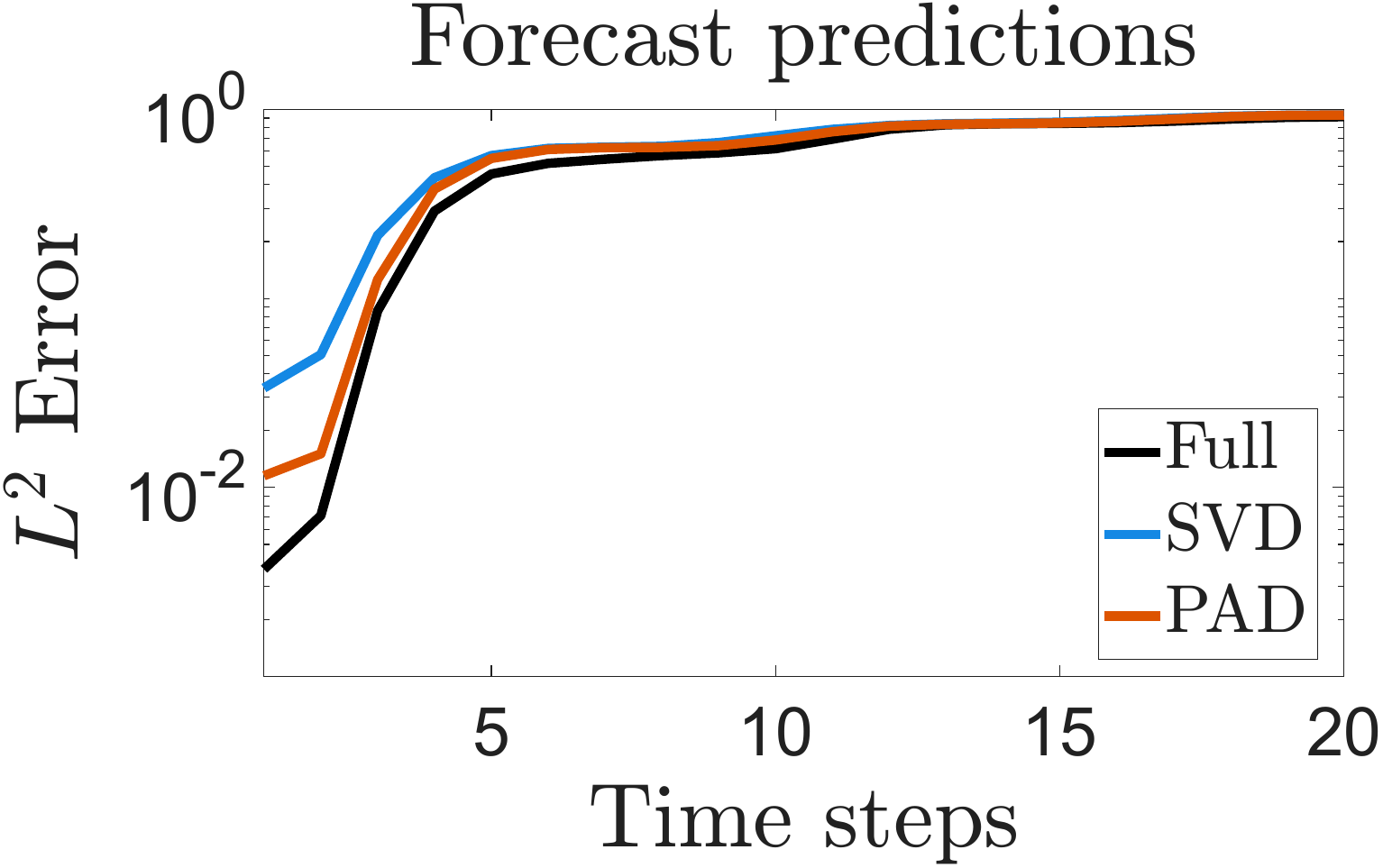}
        \includegraphics[height=0.2\linewidth]{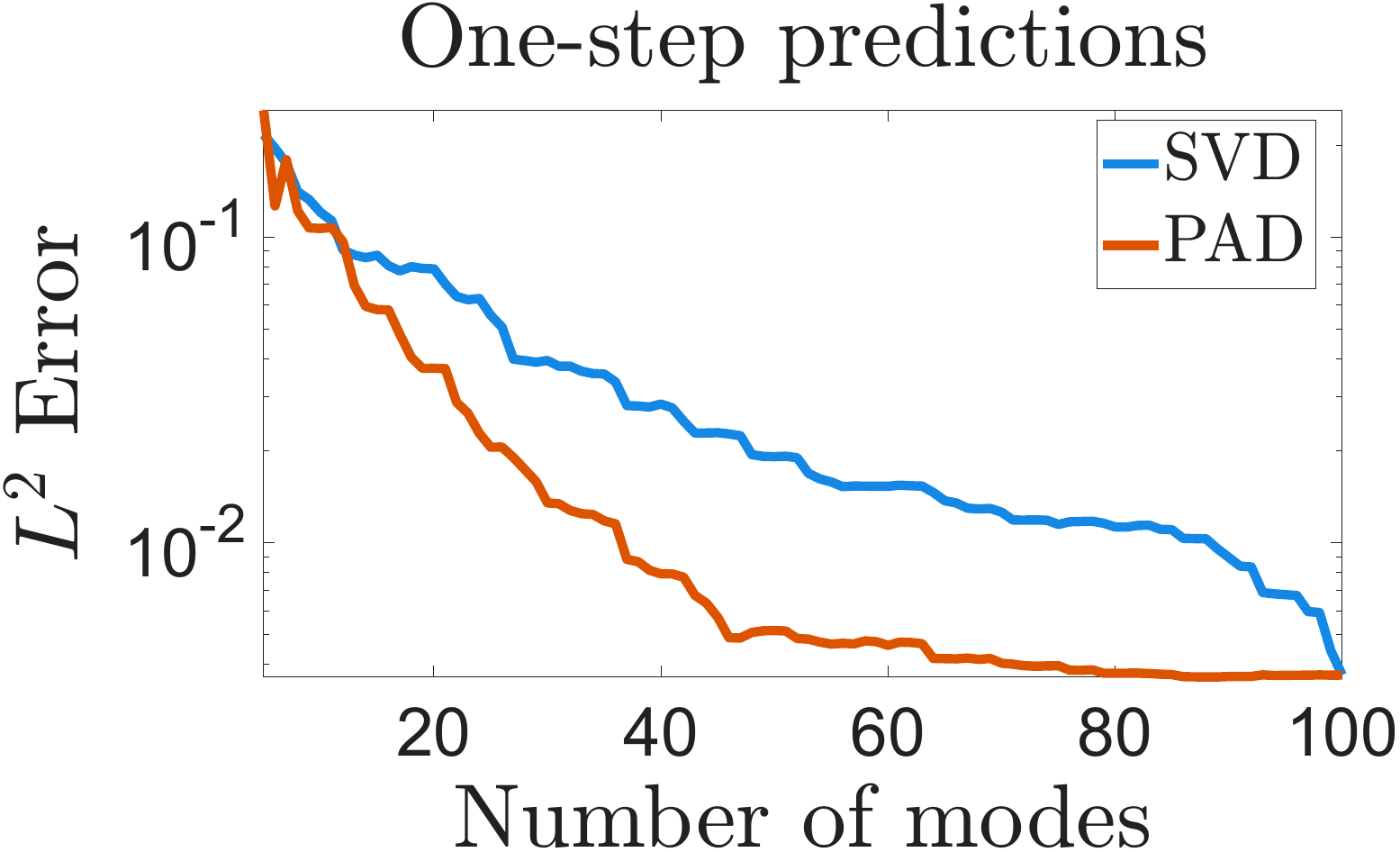}
    \caption{Left: Two trajectories on the Lorenz attractor. Middle: Exact prediction errors for the full Lorenz system and for truncated Koopman operators using PAD and SVD. Right: The dependence of one-step prediction error on the number of modes for both PAD and SVD.}
    \label{fig:pad_svd_no_modes}
\end{figure}

\section{Error bounds for Koopman mode decompositions}
\label{sec_kmd_eb}

The KMD in \cref{sec_application_error} expands an observable using approximate eigenfunctions. This expansion is inexact due to the finite basis and because $\mathcal{V}_N$ need not be invariant. In \cref{sec:Koopman_invariance_measure}, we introduced a measure of Koopman invariance for a subspace. Using similar ideas, we now bound the KMD error under repeated application of the Koopman operator. The analysis differs in two respects: first, the spaces are one-dimensional, corresponding to the observable being predicted, and second, the relevant subspace evolves with each application of the Koopman operator. The theoretical results in \cref{sec:decomp_kmd} apply to a general Hilbert space. \Cref{sec:comp_erbs} provides computable error bounds for $\koop$ on $L^2$ spaces, and \cref{sec-ptwise_erbs} extends the analysis to $\koop^*$ on an RKHS, where pointwise error bounds are also obtained.

\subsection{Decomposing the Koopman mode decomposition}\label{sec:decomp_kmd}

Let $\mathcal{H}$ be a Hilbert space with norm $\|\cdot \|$ and $\mathcal{V}_N\subset\mathcal{H}$ be a subspace spanned by a chosen dictionary. If $g\in\mathcal{V}_N$, there exists $\mathbf{g}\in\mathbb{C}^N$ such that $g=\Psi\mathbf{g}$, and the projection error in the KMD is given by $\|\mathcal{K}^{n}g-\Psi\mathbf{K}^n\mathbf{g}\|$. When $g\notin\mathcal{V}_N$, with a slight abuse of notation we let $\Psi\mathbf{g}$ be the least squares approximation to $g$ in $\mathcal{V}_N$, and control the error by
\[\setlength\abovedisplayskip{6pt}\setlength\belowdisplayskip{6pt}
\|\koop^ng-\Psi\mathbf{K}^n\mathbf{g}\|\leq\|\koop^n\|\!\!\!\!\!\!\!\underbrace{\|g-\Psi\mathbf{g}\|}_{\text{initialization error}}\!\!\!\!\!\!\!+\underbrace{\|\koop^n(\Psi\mathbf{g})-\Psi\mathbf{K}^n\mathbf{g}\|}_{\text{projection error}}.
\]
The initialization error term is straightforward to approximate numerically, so for the remainder of this section we assume that $g\in\mathcal{V}_N$ and so $g=\Psi\mathbf{g}$. Moreover, in the $L^2$ case we can eliminate the initialization error by adding $g$ to the dictionary. The following theorem provides upper bounds on the projection error.
\begin{theorem}[KMD projection error]\label{kmd_norm_bound_thm}
For every $g\in \mathcal{V}_N$ and $n\in\mathbb{N}$, we have
\begin{align}
\setlength\abovedisplayskip{1pt}\setlength\belowdisplayskip{0pt}
\left\|\mathcal{K}^n\mathcal{P}_{\mathcal{V}_N}^*g{-}\mathcal{P}_{\mathcal{V}_N}^*(\mathcal{P}_{\mathcal{V}_N}\mathcal{K}\mathcal{P}_{\mathcal{V}_N}^*)^{n}g\right\| &\leq \sqrt{\sum_{j=0}^{n-1}    \left\|\mathcal{P}_{\mathcal{V}_N}\mathcal{K}\right\|^{2j}\left\|\mathcal{Q}_{\mathcal{V}_N}\mathcal{K}^{n-j}\mathcal{P}_{\mathcal{V}_N}^*g\right\|^2}, \label{bound_lemma}\\\left\|\mathcal{K}^n\mathcal{P}_{\mathcal{V}_N}^*g{-}\mathcal{P}_{\mathcal{V}_N}^*(\mathcal{P}_{\mathcal{V}_N}\mathcal{K}\mathcal{P}_{\mathcal{V}_N}^*)^{n}g\right\| &\leq\sum_{j=0}^{n-1}    \left\|(\mathcal{P}_{\mathcal{V}_N}^*\mathcal{P}_{\mathcal{V}_N}\mathcal{K})^j\right\|\left\|\mathcal{Q}_{\mathcal{V}_N}\mathcal{K}^{n-j}\mathcal{P}_{\mathcal{V}_N}^*g\right\|. \label{bound_lemma2}
\end{align}
\end{theorem}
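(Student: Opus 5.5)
The plan is to treat both bounds as consequences of a single telescoping identity. I write $\mathcal{P}=\mathcal{P}_{\mathcal{V}_N}$, so $\mathcal{P}^*\mathcal{P}$ is the orthogonal projection onto $\mathcal{V}_N$, and $\mathcal{Q}=\mathcal{Q}_{\mathcal{V}_N}=I-\mathcal{P}^*\mathcal{P}$ is the orthogonal projection onto $\mathcal{V}_N^\perp$. I also set $\mathcal{T}=\mathcal{P}^*\mathcal{P}\mathcal{K}$.

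First observe that $\mathcal{P}^*(\mathcal{P}\mathcal{K}\mathcal{P}^*)^n=(\mathcal{P}^*\mathcal{P}\mathcal{K})^n\mathcal{P}^*=\mathcal{T}^n\mathcal{P}^*$, by regrouping the factors. The quantity to control is therefore
\[
e_n:=\mathcal{K}^n\mathcal{P}^*g-\mathcal{T}^n\mathcal{P}^*g,\qquad e_0=0 .
\]
The key identity is $\mathcal{K}-\mathcal{T}=\mathcal{Q}\mathcal{K}$.

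For \cref{bound_lemma2}, I would use the standard telescoping formula $\mathcal{K}^n-\mathcal{T}^n=\sum_{j=0}^{n-1}\mathcal{T}^j(\mathcal{K}-\mathcal{T})\mathcal{K}^{n-1-j}$. Applied to $\mathcal{P}^*g$, it gives
\[
e_n=\sum_{j=0}^{n-1}(\mathcal{P}^*\mathcal{P}\mathcal{K})^j\,\mathcal{Q}\mathcal{K}^{n-j}\mathcal{P}^*g .
\]
The triangle inequality together with submultiplicativity of the operator norm then yields \cref{bound_lemma2} directly.

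For \cref{bound_lemma}, the triangle inequality is too crude, so I will use a one-step recursion. Write $\mathcal{K}^n\mathcal{P}^*g=\mathcal{K}\,\mathcal{K}^{n-1}\mathcal{P}^*g$ and split $\mathcal{K}=\mathcal{Q}\mathcal{K}+\mathcal{P}^*\mathcal{P}\mathcal{K}$. Subtracting $\mathcal{P}^*\mathcal{P}\mathcal{K}\,\mathcal{T}^{n-1}\mathcal{P}^*g$ gives
\[
e_n=\mathcal{Q}\mathcal{K}^n\mathcal{P}^*g+\mathcal{P}^*\mathcal{P}\mathcal{K}\,e_{n-1}.
\]
The first term lies in $\mathcal{V}_N^\perp$ and the second lies in $\mathcal{V}_N$. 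Pythagoras therefore gives
\[
\|e_n\|^2=\|\mathcal{Q}\mathcal{K}^n\mathcal{P}^*g\|^2+\|\mathcal{P}\mathcal{K}e_{n-1}\|^2\le \|\mathcal{Q}\mathcal{K}^n\mathcal{P}^*g\|^2+\|\mathcal{P}\mathcal{K}\|^2\|e_{n-1}\|^2,
\]
where I used $\|\mathcal{P}^*h\|=\|h\|$ because $\mathcal{P}^*$ is an isometric inclusion.

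Unrolling this recursion by induction on $n$, starting from $e_0=0$, produces $\|e_n\|^2\le\sum_{j=0}^{n-1}\|\mathcal{P}\mathcal{K}\|^{2j}\|\mathcal{Q}\mathcal{K}^{n-j}\mathcal{P}^*g\|^2$. Taking square roots gives \cref{bound_lemma}.

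The only delicate point is the choice of decomposition. The expanded sum does not itself have mutually orthogonal terms: only the $j=0$ term is orthogonal to the rest. The gain in \cref{bound_lemma} must therefore come from the recursive orthogonal splitting at each step, not from the telescoped sum. The main thing to check is that the recursion is indexed so that the powers $\|\mathcal{P}\mathcal{K}\|^{2j}$ pair with $\mathcal{Q}\mathcal{K}^{n-j}$ exactly as stated in the theorem.
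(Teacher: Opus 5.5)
Your proof is correct and follows the paper's argument: both rest on the expansion $\mathcal{K}^n\mathcal{P}_{\mathcal{V}_N}^*g-\mathcal{P}_{\mathcal{V}_N}^*(\mathcal{P}_{\mathcal{V}_N}\mathcal{K}\mathcal{P}_{\mathcal{V}_N}^*)^ng=\sum_{j=0}^{n-1}(\mathcal{P}_{\mathcal{V}_N}^*\mathcal{P}_{\mathcal{V}_N}\mathcal{K})^j\mathcal{Q}_{\mathcal{V}_N}^*\mathcal{Q}_{\mathcal{V}_N}\mathcal{K}^{n-j}\mathcal{P}_{\mathcal{V}_N}^*g$ (your $\mathcal{Q}$ is the paper's $\mathcal{Q}_{\mathcal{V}_N}^*\mathcal{Q}_{\mathcal{V}_N}$, which gives the same norms). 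Both then apply the triangle inequality for \cref{bound_lemma2}, and for \cref{bound_lemma} split off the orthogonal $j=0$ term and recurse, which is exactly your $e_n=\mathcal{Q}\mathcal{K}^n\mathcal{P}^*g+\mathcal{P}^*\mathcal{P}\mathcal{K}e_{n-1}$. The only difference is that you get the expansion directly from the telescoping identity $A^n-B^n=\sum_{j=0}^{n-1}B^j(A-B)A^{n-1-j}$, whereas the paper derives it from an auxiliary inductive lemma on $\mathcal{P}_{\mathcal{V}_N}\mathcal{K}^n\mathcal{P}_{\mathcal{V}_N}^*$; your route is slightly more direct.
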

To prove \cref{kmd_norm_bound_thm} we use the following lemma, which provides a helpful decomposition of $\mathcal{P}_{\mathcal{V}_N}\mathcal{K}^n\mathcal{P}_{\mathcal{V}_N}^*$.
\begin{lemma}\label{kmd_bound_thm}
Let $\mathcal{Q}_{\mathcal{V}_N}$ be the orthogonal projection onto $\mathcal{V}_N^\perp$. Then for all $n\in\mathbb{N}$,
\begin{equation}\setlength\abovedisplayskip{3pt}\setlength\belowdisplayskip{3pt}
\mathcal{P}_{\mathcal{V}_N}\mathcal{K}^n\mathcal{P}_{\mathcal{V}_N}^*-(\mathcal{P}_{\mathcal{V}_N}\mathcal{K}\mathcal{P}_{\mathcal{V}_N}^*)^{n}=\sum_{j=1}^{n-1} \mathcal{P}_{\mathcal{V}_N}(\mathcal{P}_{\mathcal{V}_N}^*\mathcal{P}_{\mathcal{V}_N}\mathcal{K})^{j}\mathcal{Q}_{\mathcal{V}_N}^*\mathcal{Q}_{\mathcal{V}_N}\mathcal{K}^{n-j}\mathcal{P}_{\mathcal{V}_N}^*.\label{orthog_lemma}
\end{equation}
\end{lemma}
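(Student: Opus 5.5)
We prove \cref{orthog_lemma} by a telescoping argument, using only the identity $I=\mathcal{P}_{\mathcal{V}_N}^*\mathcal{P}_{\mathcal{V}_N}+\mathcal{Q}_{\mathcal{V}_N}^*\mathcal{Q}_{\mathcal{V}_N}$ on $\mathcal{H}$. Here $\mathcal{P}_{\mathcal{V}_N}:\mathcal{H}\to\mathcal{V}_N$, so $\mathcal{P}_{\mathcal{V}_N}^*$ is the inclusion and $\mathcal{P}_{\mathcal{V}_N}^*\mathcal{P}_{\mathcal{V}_N}$ is the orthogonal projector on $\mathcal{H}$; the analogous statements hold for $\mathcal{Q}_{\mathcal{V}_N}$. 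For brevity write $P=\mathcal{P}_{\mathcal{V}_N}^*\mathcal{P}_{\mathcal{V}_N}$ and $Q=\mathcal{Q}_{\mathcal{V}_N}^*\mathcal{Q}_{\mathcal{V}_N}=I-P$. We also use $\mathcal{P}_{\mathcal{V}_N}P=\mathcal{P}_{\mathcal{V}_N}$ and $P\mathcal{P}_{\mathcal{V}_N}^*=\mathcal{P}_{\mathcal{V}_N}^*$.

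First rewrite the compressed power. Since $\mathcal{P}_{\mathcal{V}_N}^*\mathcal{P}_{\mathcal{V}_N}=P$, we get
\[
(\mathcal{P}_{\mathcal{V}_N}\mathcal{K}\mathcal{P}_{\mathcal{V}_N}^*)^n=\mathcal{P}_{\mathcal{V}_N}(\mathcal{K}P)^{n-1}\mathcal{K}\mathcal{P}_{\mathcal{V}_N}^*=\mathcal{P}_{\mathcal{V}_N}(P\mathcal{K})^{n}\mathcal{P}_{\mathcal{V}_N}^*,
\]
where the last equality uses $\mathcal{P}_{\mathcal{V}_N}=\mathcal{P}_{\mathcal{V}_N}P$. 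Similarly, $\mathcal{P}_{\mathcal{V}_N}\mathcal{K}^n\mathcal{P}_{\mathcal{V}_N}^*=\mathcal{P}_{\mathcal{V}_N}P\mathcal{K}^n\mathcal{P}_{\mathcal{V}_N}^*$.

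Next telescope. Set $T_j=(P\mathcal{K})^j\mathcal{K}^{n-j}$ for $j=0,\dots,n$. Then
\[
T_{j+1}-T_j=(P\mathcal{K})^j(P-I)\mathcal{K}^{n-j}=-(P\mathcal{K})^jQ\mathcal{K}^{n-j}.
\]
Summing from $j=1$ to $n-1$ gives $T_1-T_n=\sum_{j=1}^{n-1}(P\mathcal{K})^jQ\mathcal{K}^{n-j}$. Sandwiching between $\mathcal{P}_{\mathcal{V}_N}$ on the left and $\mathcal{P}_{\mathcal{V}_N}^*$ on the right, the first step identifies $\mathcal{P}_{\mathcal{V}_N}T_1\mathcal{P}_{\mathcal{V}_N}^*=\mathcal{P}_{\mathcal{V}_N}P\mathcal{K}^n\mathcal{P}_{\mathcal{V}_N}^*=\mathcal{P}_{\mathcal{V}_N}\mathcal{K}^n\mathcal{P}_{\mathcal{V}_N}^*$ and $\mathcal{P}_{\mathcal{V}_N}T_n\mathcal{P}_{\mathcal{V}_N}^*=(\mathcal{P}_{\mathcal{V}_N}\mathcal{K}\mathcal{P}_{\mathcal{V}_N}^*)^n$. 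This yields exactly \cref{orthog_lemma}.

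The $j=0$ term does not appear, because $\mathcal{P}_{\mathcal{V}_N}Q=0$ kills it. This is why the sum starts at $j=1$, and why we telescope from $T_1$ rather than $T_0$.

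The only delicate point is bookkeeping between maps into $\mathcal{V}_N$ and maps on $\mathcal{H}$: $\mathcal{P}_{\mathcal{V}_N}$ versus $\mathcal{P}_{\mathcal{V}_N}^*\mathcal{P}_{\mathcal{V}_N}$, and the matching indices. The case $n=1$ is trivial, since the sum is empty and both sides vanish. Alternatively, the result can be proved by induction on $n$, using the insertion $\mathcal{K}^{n+1}=\mathcal{K}(P+Q)\mathcal{K}^n$.
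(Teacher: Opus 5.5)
Your proof is correct: the telescoping identity $T_{j+1}-T_j=-(P\mathcal{K})^jQ\mathcal{K}^{n-j}$, combined with $\mathcal{P}_{\mathcal{V}_N}P=\mathcal{P}_{\mathcal{V}_N}$ to identify the endpoints $T_1$ and $T_n$, gives exactly the stated decomposition. It is essentially the paper's argument, which proves the lemma by induction on $n$ using the same insertion $\mathcal{P}_{\mathcal{V}_N}\mathcal{K}^{n+1}\mathcal{P}_{\mathcal{V}_N}^*=\mathcal{P}_{\mathcal{V}_N}\mathcal{K}(\mathcal{P}_{\mathcal{V}_N}^*\mathcal{P}_{\mathcal{V}_N}+\mathcal{Q}_{\mathcal{V}_N}^*\mathcal{Q}_{\mathcal{V}_N})\mathcal{K}^{n}\mathcal{P}_{\mathcal{V}_N}^*$ that you mention at the end; your telescoping sum is just the unrolled form of that induction.
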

\begin{proof}
The expression holds for $n=1$. By orthogonality,
\[\setlength\abovedisplayskip{6pt}\setlength\belowdisplayskip{6pt}
\mathcal{P}_{\mathcal{V}_N}\mathcal{K}^{n+1}\mathcal{P}_{\mathcal{V}_N}^*=\mathcal{P}_{\mathcal{V}_N}\mathcal{K}\mathcal{Q}_{\mathcal{V}_N}^*\mathcal{Q}_{\mathcal{V}_N}\mathcal{K}^n\mathcal{P}_{\mathcal{V}_N}^*+\mathcal{P}_{\mathcal{V}_N}\mathcal{K}\mathcal{P}_{\mathcal{V}_N}^*\mathcal{P}_{\mathcal{V}_N}\mathcal{K}^n\mathcal{P}_{\mathcal{V}_N}^*.
\]
Hence if \cref{orthog_lemma} holds for $n$, then\begin{equation*}\setlength\abovedisplayskip{6pt}\setlength\belowdisplayskip{6pt}\begin{split}
\mathcal{P}_{\mathcal{V}_N}\mathcal{K}^{n+1}\mathcal{P}_{\mathcal{V}_N}^*&=\mathcal{P}_{\mathcal{V}_N}\mathcal{K}\mathcal{Q}_{\mathcal{V}_N}^*\mathcal{Q}_{\mathcal{V}_N}\mathcal{K}^n\mathcal{P}_{\mathcal{V}_N}^*+\mathcal{P}_{\mathcal{V}_N}\mathcal{K}\mathcal{P}_{\mathcal{V}_N}^*(\mathcal{P}_{\mathcal{V}_N}\mathcal{K}\mathcal{P}_{\mathcal{V}_N}^*)^{n}\\
&\quad\quad+\mathcal{P}_{\mathcal{V}_N}\mathcal{K}\mathcal{P}_{\mathcal{V}_N}^*\sum_{j=1}^{n-1} \mathcal{P}_{\mathcal{V}_N}(\mathcal{P}_{\mathcal{V}_N}^*\mathcal{P}_{\mathcal{V}_N}\mathcal{K})^{j}\mathcal{Q}_{\mathcal{V}_N}^*\mathcal{Q}_{\mathcal{V}_N}\mathcal{K}^{n-j}\mathcal{P}_{\mathcal{V}_N}^*\\
&=(\mathcal{P}_{\mathcal{V}_N}\mathcal{K}\mathcal{P}_{\mathcal{V}_N}^*)^{n+1}+\sum_{j=1}^{n} \mathcal{P}_{\mathcal{V}_N}(\mathcal{P}_{\mathcal{V}_N}^*\mathcal{P}_{\mathcal{V}_N}\mathcal{K})^{j}\mathcal{Q}_{\mathcal{V}_N}^*\mathcal{Q}_{\mathcal{V}_N}\mathcal{K}^{n+1-j}\mathcal{P}_{\mathcal{V}_N}^*,
\end{split}\end{equation*}
so that \cref{orthog_lemma} holds for $n+1$ and hence for all $n$ by induction.
\end{proof}
We may now prove \cref{kmd_norm_bound_thm}.
\begin{proof}[Proof of \cref{kmd_norm_bound_thm}]
By \cref{kmd_bound_thm} (after premultiplying by $\mathcal{P}_{\mathcal{V}_N}^*$),
\begin{equation*}\setlength\abovedisplayskip{6pt}\setlength\belowdisplayskip{2pt}\begin{split}
\mathcal{K}^n\mathcal{P}_{\mathcal{V}_N}^*g-\mathcal{P}_{\mathcal{V}_N}^*(\mathcal{P}_{\mathcal{V}_N}\mathcal{K}\mathcal{P}_{\mathcal{V}_N}^*)^{n}g&=\mathcal{P}_{\mathcal{V}_N}^*(\mathcal{P}_{\mathcal{V}_N}\mathcal{K}^n\mathcal{P}_{\mathcal{V}_N}^*)g-\mathcal{P}_{\mathcal{V}_N}^*(\mathcal{P}_{\mathcal{V}_N}\mathcal{K}\mathcal{P}_{\mathcal{V}_N}^*)^{n}g\\
&\quad\quad\quad\quad\quad+\mathcal{Q}_{\mathcal{V}_N}^*\mathcal{Q}_{\mathcal{V}_N}\mathcal{K}^n\mathcal{P}_{\mathcal{V}_N}^*g\\
&=\sum_{j=0}^{n-1} (\mathcal{P}_{\mathcal{V}_N}^*\mathcal{P}_{\mathcal{V}_N}\mathcal{K})^{j}\mathcal{Q}_{\mathcal{V}_N}^*\mathcal{Q}_{\mathcal{V}_N}\mathcal{K}^{n-j}\mathcal{P}_{\mathcal{V}_N}^*g,\end{split}
\end{equation*}
and \cref{bound_lemma2} follows from the triangle inequality and operator norm submultiplicativity. For \cref{bound_lemma}, by orthogonality we have that
\begin{equation*}\setlength\abovedisplayskip{2pt}\setlength\belowdisplayskip{2pt}\begin{split}
&\tilde{E}_n(g,\mathcal{V}_N):=\Big\|\sum_{j=0}^{n-1} (\mathcal{P}_{\mathcal{V}_N}^*\mathcal{P}_{\mathcal{V}_N}\mathcal{K})^{j}\mathcal{Q}_{\mathcal{V}_N}^*\mathcal{Q}_{\mathcal{V}_N}\mathcal{K}^{n-j}\mathcal{P}_{\mathcal{V}_N}^*g\Big\|^2\\
\;&=\left\|\mathcal{Q}_{\mathcal{V}_N}^*\mathcal{Q}_{\mathcal{V}_N}\mathcal{K}^{n}\mathcal{P}_{\mathcal{V}_N}^*g\right\|^2+\Big\|\sum_{j=1}^{n-1} (\mathcal{P}_{\mathcal{V}_N}^*\mathcal{P}_{\mathcal{V}_N}\mathcal{K})^{j}\mathcal{Q}_{\mathcal{V}_N}^*\mathcal{Q}_{\mathcal{V}_N}\mathcal{K}^{n-j}\mathcal{P}_{\mathcal{V}_N}^*g\Big\|^2\\
\;&\leq \left\|\mathcal{Q}_{\mathcal{V}_N}\mathcal{K}^{n}\mathcal{P}_{\mathcal{V}_N}^*g\right\|^2 \!+\!\left\|\mathcal{P}_{\mathcal{V}_N}\mathcal{K}\right\|^2\Big\|\sum_{j=0}^{n-2} (\mathcal{P}_{\mathcal{V}_N}^*\mathcal{P}_{\mathcal{V}_N}\mathcal{K})^{j}\mathcal{Q}_{\mathcal{V}_N}^*\mathcal{Q}_{\mathcal{V}_N}\mathcal{K}^{n-1-j}\mathcal{P}_{\mathcal{V}_N}^*g\Big\|^2\!\!.
\end{split}
\end{equation*}
We can continue this process recursively to arrive at
\[\setlength\abovedisplayskip{2pt}\setlength\belowdisplayskip{2pt}
\tilde{E}_n(g,\mathcal{V}_N)\leq \sum_{j=0}^{n-1}  \left\|\mathcal{P}_{\mathcal{V}_N}\mathcal{K}\right\|^{2j}\left\|\mathcal{Q}_{\mathcal{V}_N}\mathcal{K}^{n-j}\mathcal{P}_{\mathcal{V}_N}^*g\right\|^2,
\]
which completes the proof. 
\end{proof}\linespread{0.97}

\cref{kmd_norm_bound_thm} yields two different error bounds. The bound in \cref{bound_lemma2} can be advantageous as often $\|\koop^j\|$ is much smaller than $\|\koop\|^j$; indeed, many dynamical systems have $\|\koop\|>1$ but spectral radius $\rho(\koop)=1$, and so by the Gelfand spectral radius theorem~\cite{gelfand1941normierte}, $\lim_{k\rightarrow\infty}\|\koop^k\|^{1/k}=1<\|\koop\|$ and $\|\koop^j\|$ must grow slower than $\|\koop\|^j$. However, the sum of terms is larger than the square root of the sum of the squared terms. Hence, it is recommended to use \cref{bound_lemma} when $\|\koop\|=1$ (e.g., when the dynamical system is measure-preserving in the $L^2$ case) or $\|\koop\|$ is close to $1$, and \cref{bound_lemma2} otherwise; for RKHSs typically $\|\koop\|>1$~\cite[Cor.~B.3]{kohne_linfty-error_2024} and the second formula is preferred.

\subsection{Computing error bounds on \texorpdfstring{$L^2$}{L2}}
\label{sec:comp_erbs}

To compute the upper bounds in \cref{kmd_norm_bound_thm}, we use the ResDMD or SpecRKHS matrices from \cref{sec:spectral_convergence}. We must compute the terms $\|\mathcal{Q}_{\mathcal{V}_N}\koop^k\mathcal{P}_{\mathcal{V}_N}^*g\|$ and $\|\mathcal{P}_{\mathcal{V}_N}\koop^k\|$. For any $k\in\mathbb{N}$, we have
\begin{equation*}\setlength\abovedisplayskip{6pt}\setlength\belowdisplayskip{6pt}
\begin{split}
\mathcal{Q}_{\mathcal{V}_N}\mathcal{K}^{k}\mathcal{P}_{\mathcal{V}_N}^*{=}\mathcal{Q}_{\mathcal{V}_N}\mathcal{K}\mathcal{P}_{\mathcal{V}_N}^*(\mathcal{P}_{\mathcal{V}_N}\mathcal{K}\mathcal{P}_{\mathcal{V}_N}^*)^{k-1}{+}\mathcal{Q}_{\mathcal{V}_N}\mathcal{K}[\mathcal{K}^{k-1}\mathcal{P}_{\mathcal{V}_N}^*{-}\mathcal{P}_{\mathcal{V}_N}^*(\mathcal{P}_{\mathcal{V}_N}\mathcal{K}\mathcal{P}_{\mathcal{V}_N}^*)^{k-1}].
\end{split}
\end{equation*}
It is possible to use this relation and \cref{bound_lemma} recursively to obtain an explicit formula bounding $\|\mathcal{Q}_{\mathcal{V}_N}\mathcal{K}^{n-j}\mathcal{P}_{\mathcal{V}_N}^*g\|$ by terms of the form $\|\mathcal{Q}_{\mathcal{V}_N}\mathcal{K}\mathcal{P}_{\mathcal{V}_N}^*v\|$ for $v\in\mathcal{V}_N$. As a first-order approximation, we keep only the first-order terms in powers of $\mathcal{Q}_{\mathcal{V}_N}^*$:
\begin{equation}\setlength\abovedisplayskip{2pt}\setlength\belowdisplayskip{6pt}\label{eqn_first_order_approx}
\begin{split}
&\left\|\mathcal{Q}_{\mathcal{V}_N}\mathcal{K}^{n-j}\mathcal{P}_{\mathcal{V}_N}^*g\right\|^2\approx \left\|\mathcal{Q}_{\mathcal{V}_N}\mathcal{K}\mathcal{P}_{\mathcal{V}_N}^*(\mathcal{P}_{\mathcal{V}_N}\mathcal{K}\mathcal{P}_{\mathcal{V}_N}^*)^{n-j-1}g\right\|^2\\
&\quad\quad=\left\|\mathcal{K}\mathcal{P}_{\mathcal{V}_N}^*(\mathcal{P}_{\mathcal{V}_N}\mathcal{K}\mathcal{P}_{\mathcal{V}_N}^*)^{n-j-1}g\right\|^2-\left\|(\mathcal{P}_{\mathcal{V}_N}\mathcal{K}\mathcal{P}_{\mathcal{V}_N}^*)(\mathcal{P}_{\mathcal{V}_N}\mathcal{K}\mathcal{P}_{\mathcal{V}_N}^*)^{n-j-1}g\right\|^2\\
&\quad\quad=\lim_{M\rightarrow\infty}(\mathbf{K}^{n-j-1}\mathbf{g})^*(\mathbf{L}-\mathbf{K}^*\mathbf{G}\mathbf{K})\mathbf{K}^{n-j-1}\mathbf{g},
\end{split}
\end{equation}
where the first equality follows from orthogonality and the second equality holds assuming a convergent quadrature rule. 

Next, we make the approximation $\|\mathcal{P}_{\mathcal{V}_N}\koop^j\|\approx\|(\mathcal{P}_{\mathcal{V}_N}\koop\mathcal{P}_{\mathcal{V}_N}^*)^j\mathcal{P}_{\mathcal{V}_N}\|,$ which converges in the large subspace limit as $N\rightarrow\infty$. Then,
\begin{equation}\setlength\abovedisplayskip{6pt}\setlength\belowdisplayskip{6pt}\label{eqn:operator_norm_est}
\begin{split}
&\|(\mathcal{P}_{\mathcal{V}_N}\koop\mathcal{P}_{\mathcal{V}_N}^*)^j\mathcal{P}_{\mathcal{V}_N}\|^2=\sup_{g\in\mathcal{V}_N}\cfrac{(\mathbf{K}^j\mathbf{g})^*\mathbf{G}(\mathbf{K}^j\mathbf{g})}{\mathbf{g}^*\mathbf{Gg}}\\
&\quad\quad=\sup_{h\in\mathcal{V}_N}\cfrac{(\mathbf{G}^{1/2}\mathbf{K}^j\mathbf{G}^{-1/2}\mathbf{h})^*(\mathbf{G}^{1/2}\mathbf{K}^j\mathbf{G}^{-1/2}\mathbf{h})}{\mathbf{h}^*\mathbf{h}}=\|\mathbf{G}^{1/2}\mathbf{K}^j\mathbf{G}^{-1/2}\|^2.
\end{split}
\end{equation}
Combining \cref{eqn_first_order_approx,eqn:operator_norm_est} with \cref{kmd_norm_bound_thm} yields the first-order upper bounds:
\begin{align}\setlength\abovedisplayskip{2pt}\setlength\belowdisplayskip{2pt}
\label{KMD_bound_first_order}
E_n^{(1)}(g,\mathcal{V}_N)&\coloneqq \sqrt{\sum_{j=0}^{n-1}  
\|\mathbf{G}^{1/2}\mathbf{K}\mathbf{G}^{-1/2}\|^{2j}
(\mathbf{K}^{n-j-1}\mathbf{g})^*(\mathbf{L}-\mathbf{K}^*\mathbf{G}\mathbf{K})\mathbf{K}^{n-j-1}\mathbf{g}},\\
\label{KMD_bound_first_order_2}
E_n^{(2)}(g,\mathcal{V}_N)&\coloneqq \sum_{j=0}^{n-1}  
\|\mathbf{G}^{1/2}\mathbf{K}^j\mathbf{G}^{-1/2}\|
\sqrt{(\mathbf{K}^{n-j-1}\mathbf{g})^*(\mathbf{L}-\mathbf{K}^*\mathbf{G}\mathbf{K})\mathbf{K}^{n-j-1}\mathbf{g}}.
\end{align}

\begin{algorithm}[t]
\textbf{Input:} Matrices $\mathbf{G}$, $\mathbf{A}$, $\mathbf{K}$ and $\mathbf{L}$ in \cref{quad_convergence,eqn:K_l2_defn,quad_convergence2} for dictionary $\{\psi_j\}_{j=1}^{N}$, observable $g=\Psi\mathbf{g}$, number of timesteps $n\in\mathbb{N}$.\\
\begin{algorithmic}[1]
\STATE For $j=0,\ldots,n-1$, compute
$
\|\mathbf{G}^{1/2}\mathbf{K}^j\mathbf{G}^{-1/2}\|$ and $ (\mathbf{K}^{j}\mathbf{g})^*(\mathbf{L}-\mathbf{K}^*\mathbf{G}\mathbf{K})\mathbf{K}^{j}\mathbf{g}.
$
\STATE Compute
$E_n^{(1)}$ and $E_n^{(2)}$ following \cref{KMD_bound_first_order,KMD_bound_first_order_2},
and take $E_n=\min\{E_n^{(1)},E_n^{(2)}\}$.
\end{algorithmic} \textbf{Output:} $E_n$, a first-order error bound for the KMD of $g$ after $n$ time-steps.
\caption{Computation of error bounds for the KMD on $L^2$.}\label{alg:resDMD_KMD}
\end{algorithm}

\cref{alg:resDMD_KMD} summarizes the method. As $N\rightarrow \infty$, $\mathcal{Q}_{\mathcal{V}_N}g\rightarrow 0$, so both the exact error bounds \cref{bound_lemma,bound_lemma2} and the corresponding first-order bounds \cref{KMD_bound_first_order,KMD_bound_first_order_2} tend to $0$ (after taking the large-data limit in the latter case). Note that the one-step prediction error is always exact (up to quadrature error). This follows from Pythagoras’ theorem for observables $g\in\mathcal{V}_N$:
\[\setlength\abovedisplayskip{2pt}\setlength\belowdisplayskip{2pt}
\|\koop g-\mathcal{P}_{\mathcal{V}_N}^*\mathcal{P}_{\mathcal{V}_N}\koop g\|^2=\|\koop g\|^2-\|\mathcal{P}_{\mathcal{V}_N}^*\mathcal{P}_{\mathcal{V}_N}\koop g\|^2=\mathbf{g}^*\mathbf{L}\mathbf{g}-(\mathbf{K}\mathbf{g})^*\mathbf{G}(\mathbf{K}\mathbf{g}).
\]
The right-hand side coincides exactly with the output of \cref{alg:resDMD_KMD} for $n=1$.

\subsection{Pointwise error bounds in RKHS}\label{sec-ptwise_erbs}

\begin{algorithm}[t]
\textbf{Input:} Matrices $\mathbf{G}$, $\mathbf{A}$, $\mathbf{K}$ and $\mathbf{R}$ from \cref{eqn:GA_ker_defn,eqn:K_ker_defn,eqn:L_ker_defn} for kernel function $\mathfrak{K}:\mathcal{X}\times\mathcal{X}\rightarrow\mathbb{C}$, starting point $x_0$, observable $g\in\mathcal{H}$, number of timesteps $n\in\mathbb{N}$.\\
\begin{algorithmic}[1]
\STATE Let $\mathbf{b}=\begin{bmatrix}
    \mathfrak{K}(x_0,x^{(1)})&\dots &\mathfrak{K}(x_0,x^{(N)})
\end{bmatrix}^\top$, $\mathbf{c}=\mathbf{G}^{-1}\mathbf{b}$ and  
$\delta=\sqrt{\mathfrak{K}(x_0,x_0)-\mathbf{c}^*\mathbf{b}}$.
\STATE For $j=0,\ldots,n-1$, compute
$
\|\mathbf{G}^{1/2}\mathbf{K}^j\mathbf{G}^{-1/2}\|$ and $ 
(\mathbf{K}^{j}\mathbf{c})^*(\mathbf{R}-\mathbf{K}^*\mathbf{G}\mathbf{K})(\mathbf{K}^{j}\mathbf{c}).
$
\STATE Estimate $\|g\|$ and compute $E_n$ according to \cref{eqn:init_proj_error}.
\end{algorithmic} \textbf{Output:} First-order error bounds for the PFMD of $g$ at $x_0$ after $n$ time-steps.
\caption{Computation of error bounds for the PFMD on an RKHS.}\label{alg:specrkhs_pfmd}
\end{algorithm}

The RKHS setting avoids large-data limits and allows direct pointwise error bounds. Let $\mathfrak{K}$ be the used kernel and $\{x^{(n)},y^{(n)}\}_{n=1}^N$ be the snapshot data. We then define the matrices $\mathbf{G}$, $\mathbf{A}$, $\mathbf{K}$ and $\mathbf{R}$ as in \cref{eqn:GA_ker_defn,eqn:K_ker_defn,eqn:L_ker_defn}. We want to bound the error in estimating $g(x_n)=\koop^ng(x_0)$. Recalling from \cref{eqn:pfmd_predictions} that $\koop^ng(x_0)=\langle g,(\koop^*)^n\mathfrak{K}_{x_0}\rangle$, we need to bound the error in the PFMD for $\mathfrak{K}_{x_0}$. First, we approximate $\mathfrak{K}_{x_0}$ in the subspace $\mathcal{V}_N=\mathrm{span}\{\mathfrak{K}_{x^{(1)}},\dots,\mathfrak{K}_{x^{(N)}}\}$, i.e., we find a vector of coefficients $\mathbf{c}\in\mathbb{C}^N$ that minimizes
\begin{equation}\setlength\abovedisplayskip{2pt}\setlength\belowdisplayskip{2pt}
\label{min_projection}
    \Big\|\mathfrak{K}_{x_0}-\sum_{n=1}^Nc_n\mathfrak{K}_{x^{(n)}}\Big\|^2.
\end{equation}
Letting $\mathbf{b}=\begin{bmatrix}
    \mathfrak{K}(x_0,x^{(1)})&\cdots &\mathfrak{K}(x_0,x^{(N)})
\end{bmatrix}^\top$, this is equivalent to minimizing $\mathbf{c}^*\mathbf{G}\mathbf{c}-\mathbf{b}^*\mathbf{c}-\mathbf{c}^*\mathbf{b}$, which has solution
$\mathbf{c}=\mathbf{G}^{-1}\mathbf{b}$. We let the residual error be $\delta\coloneqq\min_{\mathbf{c}'}\|\mathfrak{K}_{x_0}-\sum_{n=1}^Nc_n'\mathfrak{K}_{x^{(n)}}\|=\sqrt{\mathfrak{K}(x_0,x_0)-\mathbf{c}^*\mathbf{b}}$, and let $c=\sum_{n=1}^Nc_n\mathfrak{K}_{x^{(n)}}\in\mathcal{V}_N$. Our approximation to $g(x_n)$ is $
\Gamma(g,x_0,n)=\langle g,\mathcal{P}_{\mathcal{V}_N}^*(\mathcal{P}_{\mathcal{V}_N}\koop^*\mathcal{P}_{\mathcal{V}_N}^*)^nc\rangle$, with error bounded by
\begin{equation}\setlength\abovedisplayskip{6pt}\setlength\belowdisplayskip{6pt}\label{eqn:pfmd_pointwise_bound}
    \begin{split}
        &|g(x_n)-\Gamma(g,x_0,n)|=\left|\langle g,(\koop^*)^n\mathfrak{K}_{x_0}\rangle_{\mathfrak{K}}-\left\langle g,\mathcal{P}_{\mathcal{V}_N}^*(\mathcal{P}_{\mathcal{V}_N}\koop^*\mathcal{P}_{\mathcal{V}_N}^*)^nc\right\rangle\right|\\
        &\;\leq \|g\|\left\|(\koop^*)^n\mathfrak{K}_{x_0}-\mathcal{P}_{\mathcal{V}_N}^*(\mathcal{P}_{\mathcal{V}_N}\koop^*\mathcal{P}_{\mathcal{V}_N}^*)^nc
        \right\|\\
        &\;\leq \|g\|\left(\left\|(\koop^*)^n\mathfrak{K}_{x_0}-(\koop^*)^n\mathcal{P}_{\mathcal{V}_N}^*c\right\|+\left\|(\koop^*)^n\mathcal{P}_{\mathcal{V}_N}^*c-\mathcal{P}_{\mathcal{V}_N}^*(\mathcal{P}_{\mathcal{V}_N}\koop^*\mathcal{P}_{\mathcal{V}_N}^*)^n c
        \right\|\right)\\
        &\;\leq \|g\|\|(\koop^*)^n\|\delta+\|g\|\left\|(\koop^*)^n\mathcal{P}_{\mathcal{V}_N}^*c-\mathcal{P}_{\mathcal{V}_N}^*(\mathcal{P}_{\mathcal{V}_N}\koop^*\mathcal{P}_{\mathcal{V}_N}^*)^nc
        \right\|.
    \end{split}
\end{equation}
We can approximate $\|(\koop^*)^n\|\approx \|\mathbf{G}^{1/2}\mathbf{K}^n\mathbf{G}^{-1/2}\|$, and $\|g\|$ can be computed analytically, approximated by finding a least squares approximation to $g$ in $\mathcal{V}_N$ or dealt with using the techniques of \cref{sec:ee_gp_comp}. To bound the projection error, note that \cref{kmd_norm_bound_thm} generalizes immediately to the Perron--Frobenius operator, i.e.,
\[\setlength\abovedisplayskip{3pt}\setlength\belowdisplayskip{3pt}
\left\|(\mathcal{K}^*)^n\mathcal{P}_{\mathcal{V}_N}^*c-\mathcal{P}_{\mathcal{V}_N}^*(\mathcal{P}_{\mathcal{V}_N}\mathcal{K}^*\mathcal{P}_{\mathcal{V}_N}^*)^{n}c\right\|\leq\sum_{j=0}^{n-1}  \left\|(\mathcal{P}_{\mathcal{V}_N}^*\mathcal{P}_{\mathcal{V}_N}\mathcal{K}^*)^j\right\|\left\|\mathcal{Q}_{\mathcal{V}_N}(\mathcal{K}^*)^{n-j}\mathcal{P}_{\mathcal{V}_N}^*c\right\|,
\]
for all $n\in\mathbb{N}$. As before, a first-order approximation is
\[\setlength\abovedisplayskip{6pt}\setlength\belowdisplayskip{3pt}
\|\mathcal{Q}_{\mathcal{V}_N}(\koop^*)^{n-j}\mathcal{P}_{\mathcal{V}_N}^*c\|^2\approx (\mathbf{K}^{n-j-1}\mathbf{c})^*(\mathbf{R}-\mathbf{K}^*\mathbf{G}\mathbf{K})(\mathbf{K}^{n-j-1}\mathbf{c}).
\]
These arguments yield the first-order RKHS-norm error bound 
\begin{equation}\setlength\abovedisplayskip{2pt}\setlength\belowdisplayskip{2pt}\label{eqn:init_proj_error}
\begin{split}
\|(\koop^*)^n\mathfrak{K}_{x_0}-\mathcal{P}_{\mathcal{V}_N}^*(\mathcal{P}_{\mathcal{V}_N}\koop^*\mathcal{P}_{\mathcal{V}_N}^*)^nc\|\leq E_n\coloneqq\underbrace{\|(\koop^*)^n\|\delta}_{\text{initialization error}}\hspace{2.5cm}\\
+\underbrace{\sum_{j=0}^{n-1}  
\|\mathbf{G}^{1/2}\mathbf{K}^j\mathbf{G}^{-1/2}\|
\sqrt{(\mathbf{K}^{n-j-1}\mathbf{c})^*(\mathbf{R}-\mathbf{K}^*\mathbf{G}\mathbf{K})(\mathbf{K}^{n-j-1}\mathbf{c})}}_{\text{projection error}},
\end{split}
\end{equation}
and the corresponding pointwise error bound 
\begin{equation}\setlength\abovedisplayskip{6pt}\setlength\belowdisplayskip{6pt}\label{rkhs_error_bound}
        |g(x_n)-\Gamma(g,x_0,n)|\lesssim \|g\|E_n.
\end{equation}
The algorithm is summarized in \cref{alg:specrkhs_pfmd}. As before, the projection error is exact at $n=1$, but the initialization error is only exact for $n=0$.

\subsection{Numerical examples}\label{sec:erb_numex}

This section presents numerical examples illustrating our error bounds.

\subsubsection{Optimality of the bounds in general}

We first provide an example of a system for which our error bounds, and their first-order numerical approximation, are sharp; hence, without further assumptions one cannot improve upon our results. Let $\mathcal{X}$, $F$ and $\omega$ be such that $\koop$ has (countably infinite) Lebesgue spectrum, i.e., there exists an orthonormal basis of $L^2(\mathcal{X},\omega)$ $
\{1,g_{i,j}:i\in\mathbb{N},j\in\mathbb{Z}\}$ with $\koop g_{i,j}=g_{i,j-1}$ for $i\in\mathbb{N}$, $j\in\mathbb{Z}$. Examples of such systems are a two-sided Bernoulli shift or Arnold's famous catmap \cite{arnold1968ergodic}. Let $\mathcal{V}_N\subset L^2(\mathcal{X},\omega)$ be a sequence of increasing subspaces such that for all $N$, $\{1,g_{1,-N},\dots,g_{1,N}\}\subset\mathcal{V}_N$ but $\{g_{1,-(N+1)},g_{1,(N+1)}\}\notin \mathcal{V}_N$. For all $N,k\in\mathbb{N}$, there exists $g\in\mathcal{V}_N$ such that \cref{bound_lemma,bound_lemma2,KMD_bound_first_order,KMD_bound_first_order_2} are sharp at $n=k$; in particular, we take $g=g_{1,-(N+1)+k}$. In fact, the full-order bound is exact for all $n\leq k$ and the first-order bound is exact at all times. 

The relevant errors and bounds are shown in \cref{fig:cat_map} for $k=4$ and $N=100$. The system is such that once an observable leaves $\mathcal{V}_N$ it never returns; this explains why the exact and first-order errors remain constant beyond $k=4$. By contrast, the full-order term keeps accumulating errors as more and more terms leave the subspace.

\begin{figure}[t]
    \centering
    \includegraphics[height=0.3\linewidth]{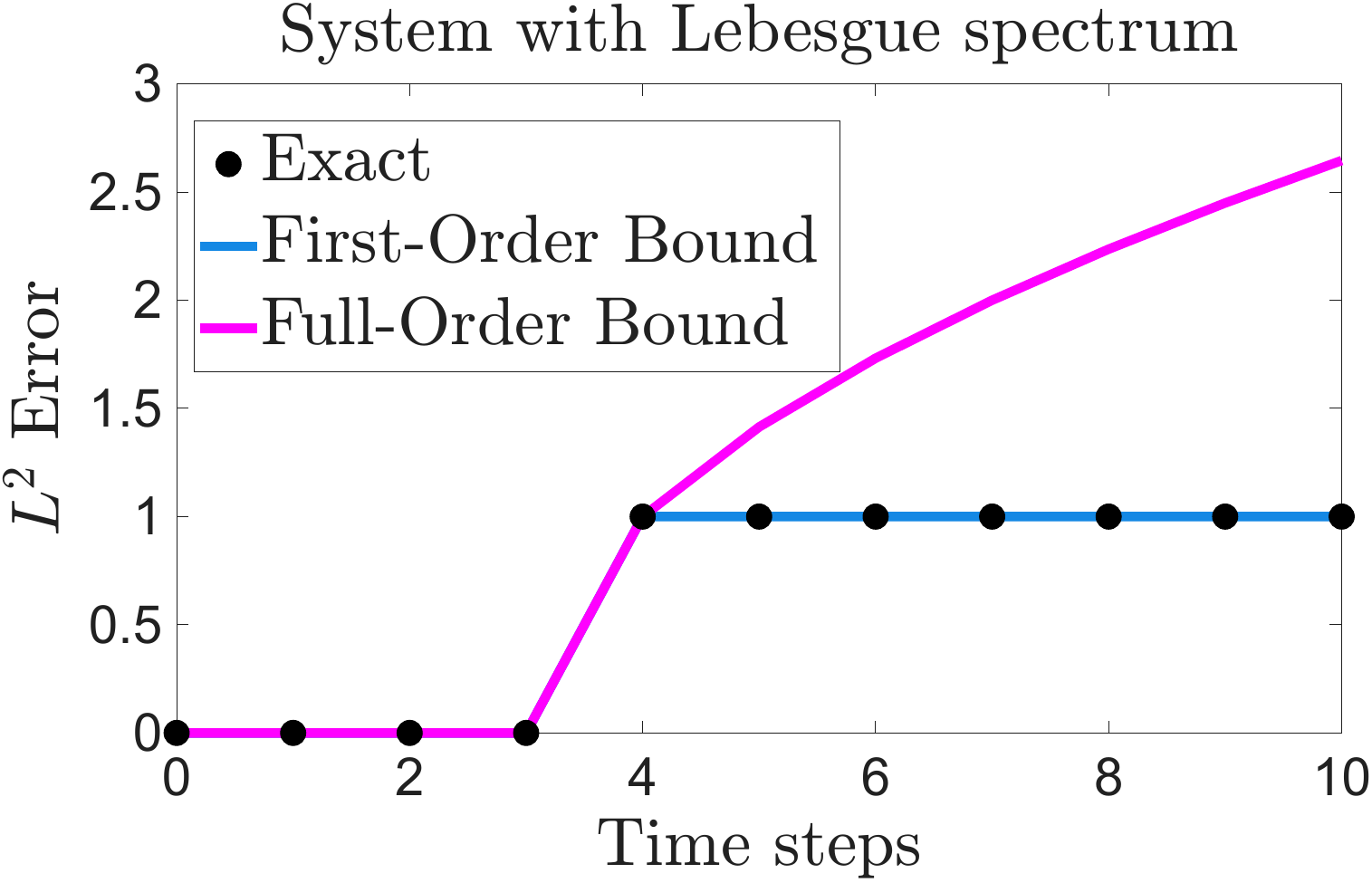}
        \caption{Sharpness of bounds. The exact errors and error bound formula to first-order (minimum of \cref{KMD_bound_first_order,KMD_bound_first_order_2}) and full-order (minimum of \cref{bound_lemma,bound_lemma2}) for a system with Lebesgue spectrum.}
    \label{fig:cat_map}
\end{figure}

\subsubsection{Duffing and Lorenz systems}
We now return to the Duffing and Lorenz systems defined in \cref{duffing_eqn,lorenz_defn}. We first consider the $L^2$ setting, using a dictionary of $N=100$ Chebyshev polynomials and $N=500$ exponential RBFs respectively, and apply \cref{alg:resDMD_KMD}. In \cref{sec:pad}, we showed that using a PAD instead of an SVD improves prediction accuracy; here we demonstrate that the error bounds also improve correspondingly. For each system, we take $r=N/2$ and consider the state observable $g(x)=x_1$. The exact errors and error bounds are shown in \cref{fig:erb_duffing_lorenz} for both the SVD- and PAD-truncated systems. The substantial reduction in exact errors achieved by the PAD at short times is mirrored by the error bounds. The bounds are not always strictly above the exact error, reflecting the use of first-order approximations and of quadrature error. For the Lorenz oscillator the bounds are essentially tight in both cases; errors in unitary Koopman operators are easier to predict.

\begin{figure}[t]
    \centering
        \includegraphics[height=0.3\linewidth]{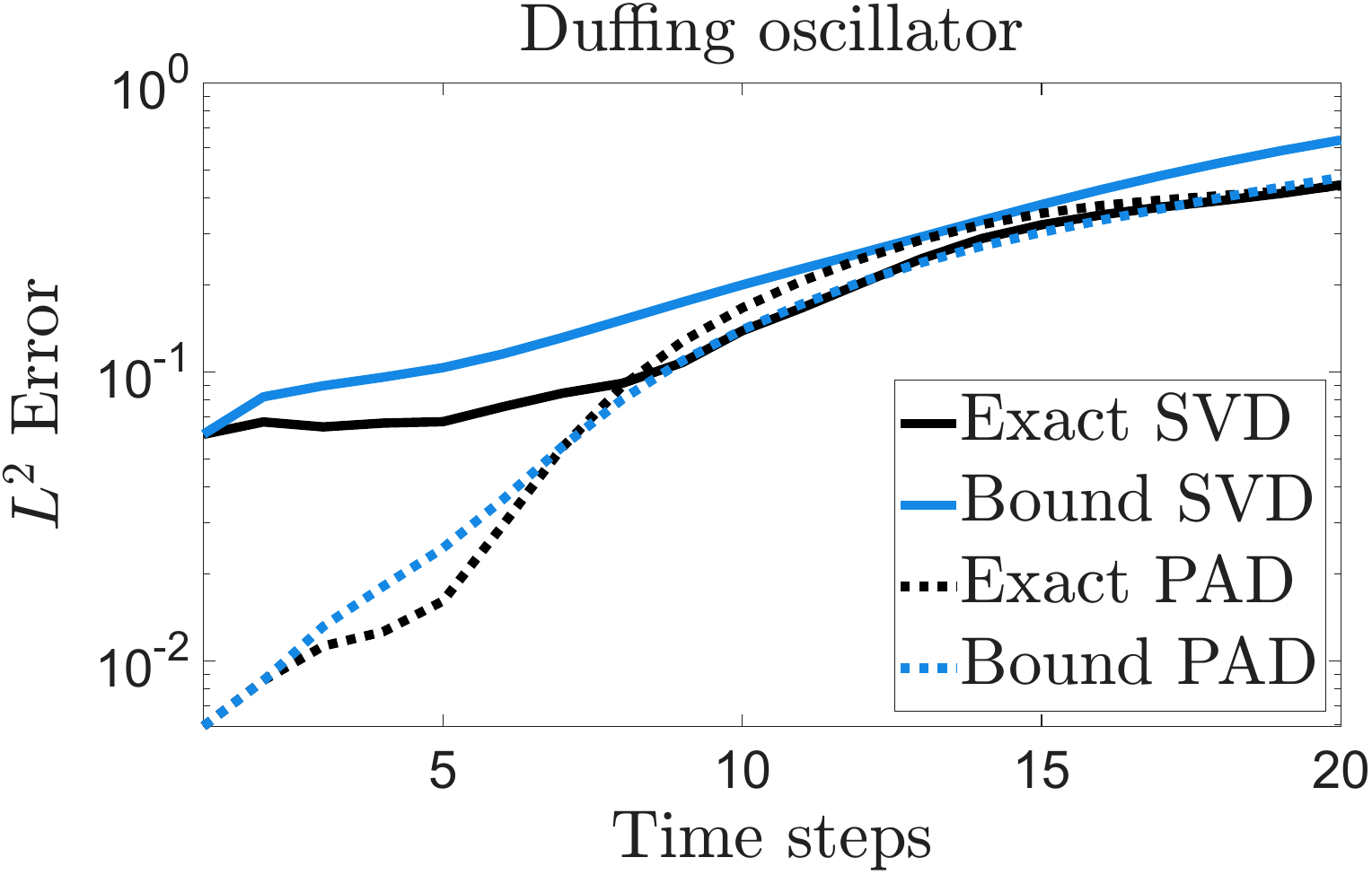}\hfill
    \includegraphics[height=0.3\linewidth]{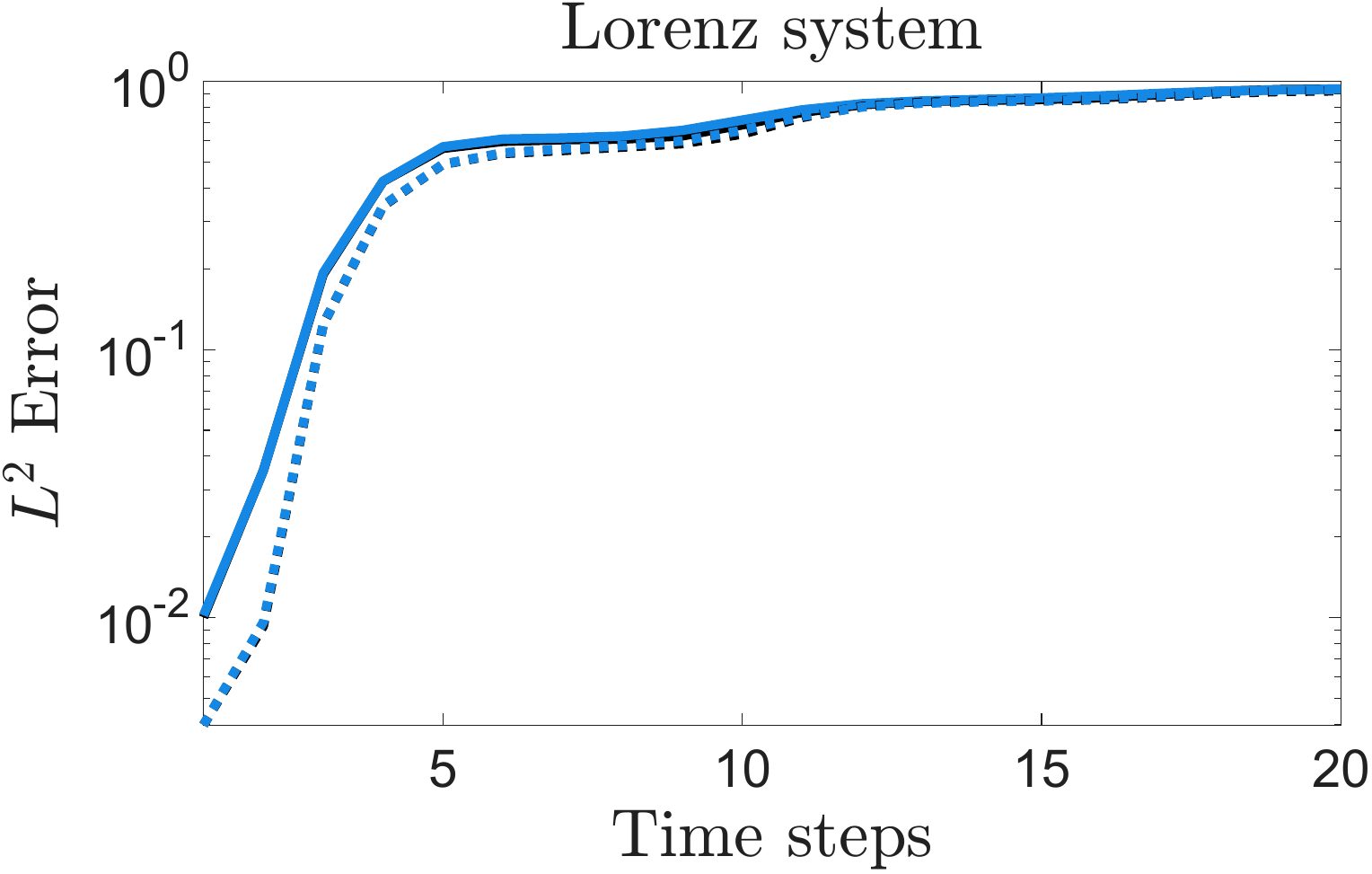}
    \caption{Exact $L^2$ errors and error bounds for the Duffing oscillator (left) and Lorenz system (right) using a truncated SVD and PAD with the same number of modes. The PAD method outperforms the SVD for small numbers of time steps, in terms of both exact errors and error bounds.}
    \label{fig:erb_duffing_lorenz}
\end{figure}

In the RKHS setting, we first apply \cref{alg:specrkhs_pfmd} to the Duffing oscillator with $g=\mathfrak{K}_{x_0}$, where $x_0\in[-2,2]^2$ is sampled uniformly at random, using the Mat\'ern kernel
\[\setlength\abovedisplayskip{6pt}\setlength\belowdisplayskip{6pt}
\mathfrak{K}(x,y)=\begin{cases}
    2,&\text{if }x=y,\\
    (\sigma\|x-y\|)^2K_2(\sigma\|x-y\|),&\text{otherwise},
\end{cases}
\]
where $K_2$ denotes the modified Bessel function of the second kind of order $2$, and $\sigma$ is chosen to ensure that the Gram matrix $\mathbf{G}$ is well conditioned; in particular, we take $\sigma$ to be inversely proportional to the averaged standard deviation of each component of the data. This kernel induces an RKHS equivalent to the Sobolev space $H^3(\mathbb{R}^2)$~\cite{kohne_linfty-error_2024,wendland_scattered_2004}. We use the same snapshot data as the $L^2$ case, and for order reduction, we use an SVD with $r=N/5$; since we are using a delay embedding method, most principal angles are $0$, so PAD is not as informative here. In \cref{fig:erb_svd_pad_l2}, we display the exact RKHS error and upper bound \cref{eqn:init_proj_error} for the Duffing oscillator and Lorenz system. Here, the initialization error dominates at short times but is eventually overtaken by the projection error. Both errors vanish as $N\to\infty$. Similar behavior is observed for the Lorenz system, again using a Mat\'ern kernel. Compared to the $L^2$ case, the error bounds are less tight; this shall be addressed in the following sections.

\begin{figure}[t]
    \centering
        \includegraphics[height=0.3\linewidth]{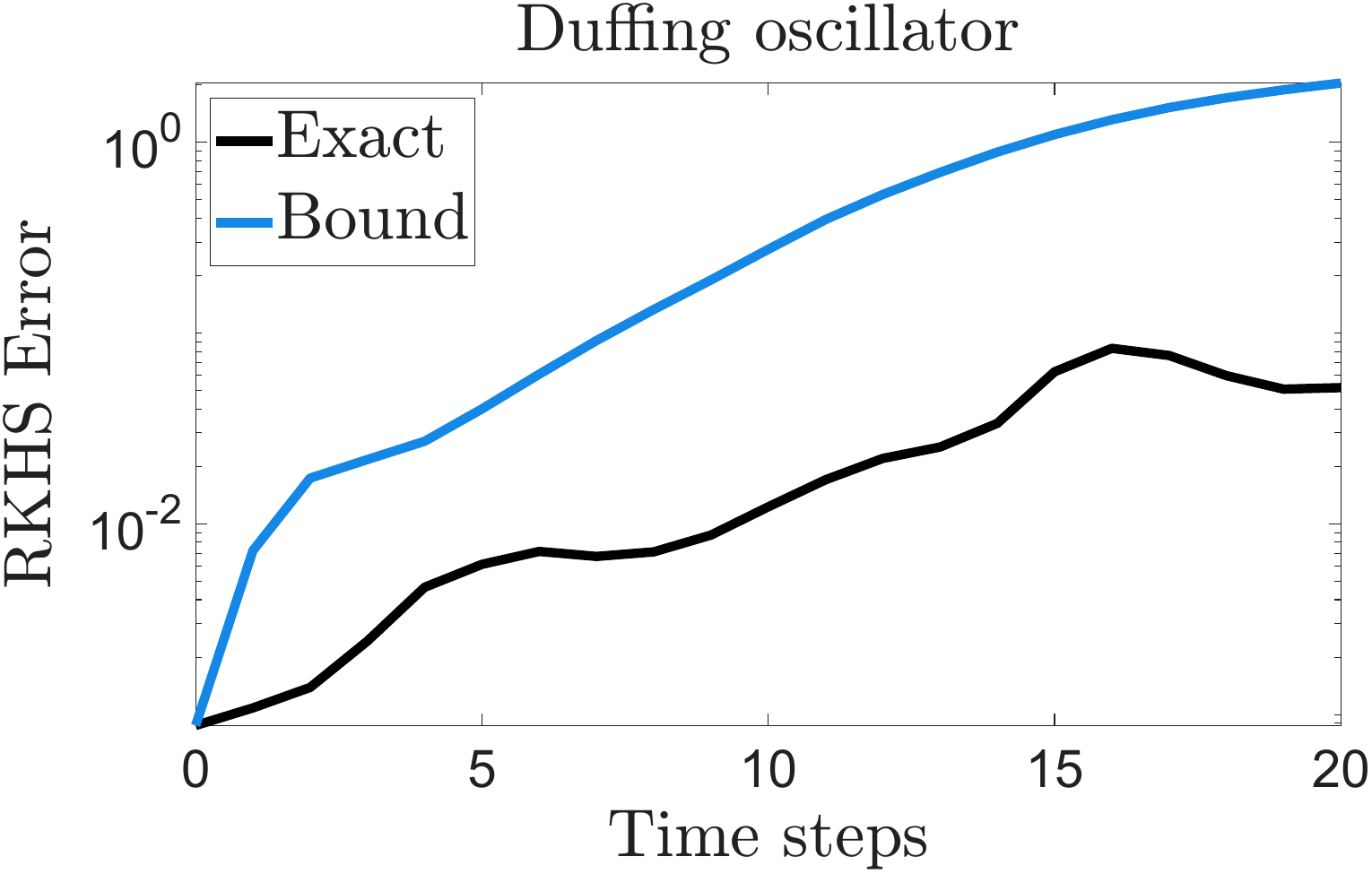}
        \hfill\includegraphics[height=0.3\linewidth]{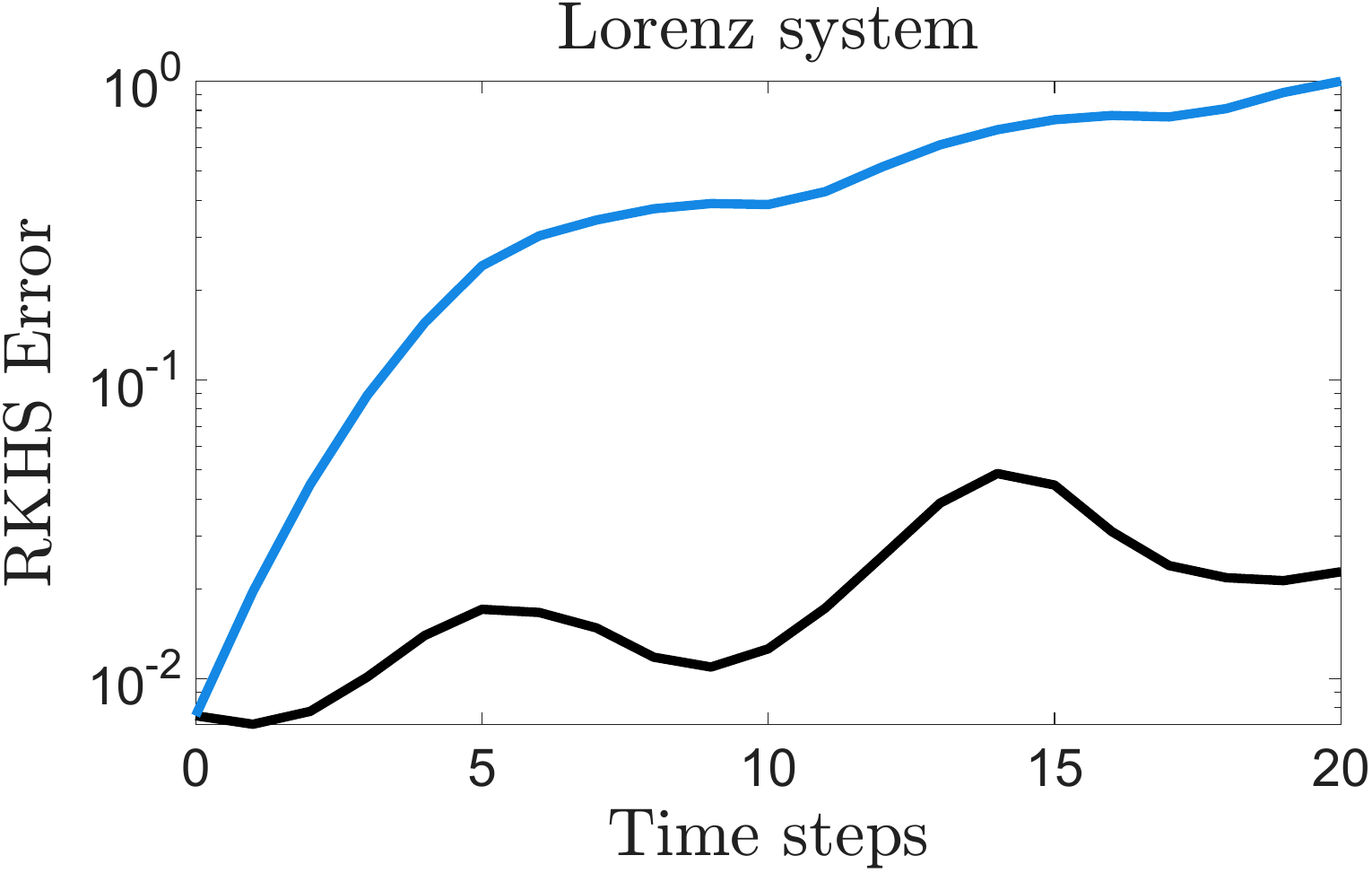}
        \caption{Exact RKHS errors and error bounds \cref{eqn:init_proj_error} for the Duffing oscillator (left) and Lorenz system (right) using a Mat\'ern kernel.}
    \label{fig:erb_svd_pad_l2}
\end{figure}

\begin{figure}[t]
    \centering
    \raisebox{-0.5\height}{\includegraphics[height=0.24\linewidth]{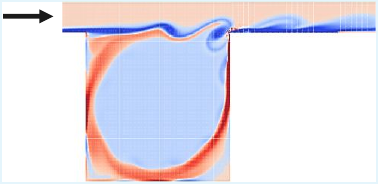}}\hfill
    \raisebox{-0.5\height}{\includegraphics[height=0.3\linewidth]{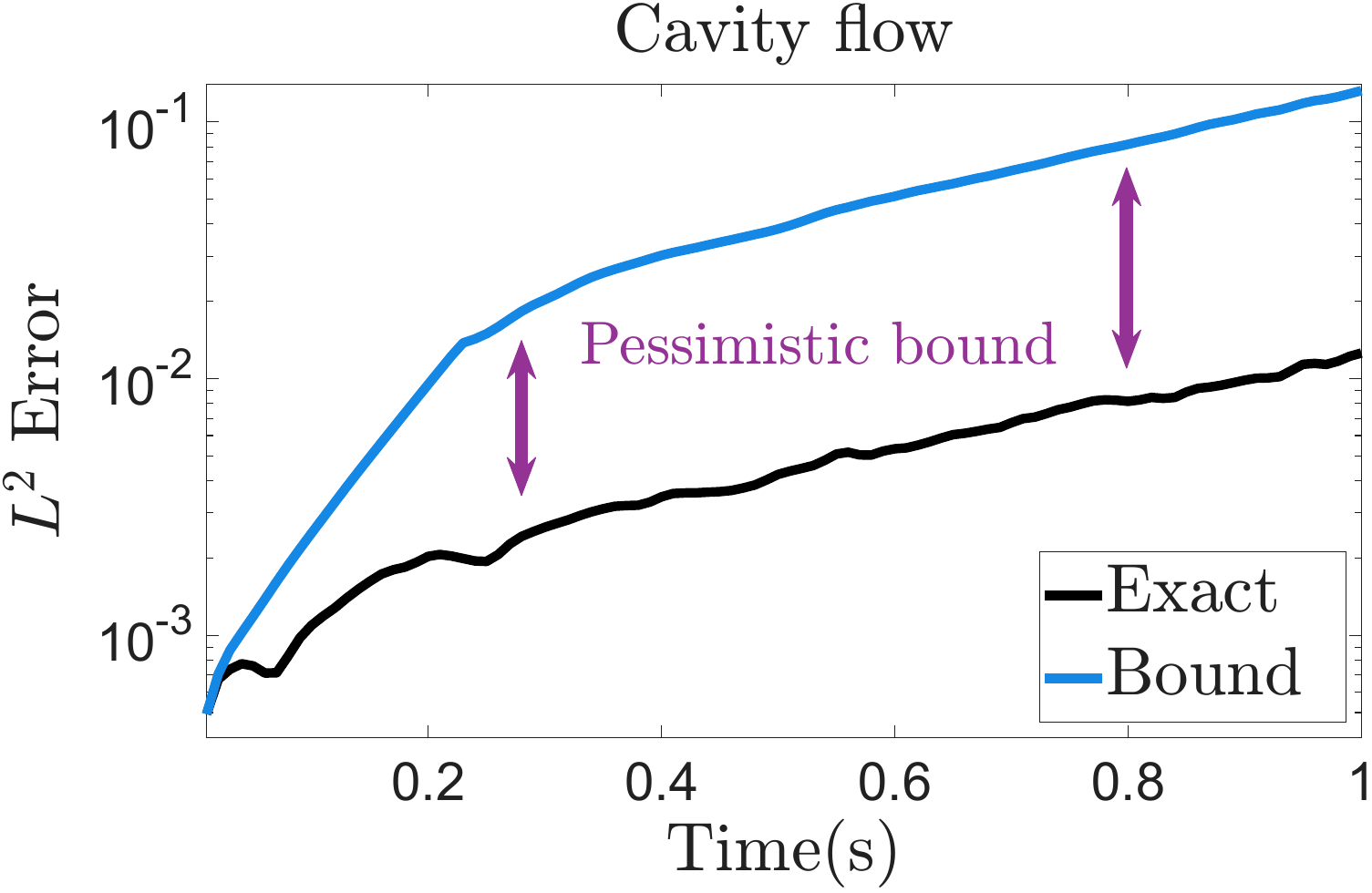}}
    \caption{Left: A snapshot of the cavity flow system. Right: Exact $L^2$-errors and error bounds for the cavity flow system. The error bounds overestimate the true error by a factor of over $10$.}
    \label{fig:erb_cavity}
\end{figure}

\subsubsection{Cavity flow---an example where the bounds are pessimistic} As a final example, we consider applying error bounds to a more complicated, high-dimensional system involving cavity flow data at Reynolds number 7500 with $144000$ degrees of freedom. The snapshot data are drawn from a single trajectory. We use the first $200$ pairs of snapshot data to construct a DMD-based basis, and the next $1000$ pairs to compute the ResDMD matrices. We apply \cref{alg:resDMD_KMD} to compute error bounds on the $L^2$-norm errors of the prediction of a component of the full-state observable, and compare to the true $L^2$-errors. The results are shown in \cref{fig:erb_cavity}. For roughly $t\leq 0.25$, the bound given by \cref{KMD_bound_first_order} is smaller than that given by \cref{KMD_bound_first_order_2}, while the opposite is true for $t\geq 0.25$. We see that while the error bounds do accurately bound the true errors, they are pessimistic. In the next section, we will explore how to overcome this challenge and provide a more accurate estimate of the error.

\section{Expected errors}
\label{sec:expected_errors}

The inequalities used to derive the error bounds above can introduce a substantial gap between the bound and the true error. Motivated by the cavity flow example in \cref{fig:erb_cavity}, we introduce randomized, Gaussian--based methods that yield expected error estimates, and provide a closer approximation to the true error.

\subsection{Gaussian averaging}
\label{sec:gaussian_process_intro}

We consider Gaussian random elements, the Hilbert-space analogue of multivariate Gaussian distributions. Let $\{\phi_j\}_{j=1}^{\infty}$ be an orthonormal basis of $\mathcal{H}$ and let $\{\lambda_j\}_{j=1}^{\infty}\subset(0,\infty)$ satisfy $\smash{\sum_{j=1}^{\infty}\lambda_j<\infty}$. If $\{a_j\}_{j=1}^{\infty}$ are independent standard normal variables, then $\zeta=\sum_{j=1}^{\infty}\sqrt{\lambda_j}a_j\phi_j$ converges almost surely in $\mathcal{H}$ and defines a centered Gaussian random element. This representation is the Karhunen--Lo\`eve expansion \cite[Thm.~6.19]{stuart2010inverse}. The associated covariance operator $\mathscr{C}$ has eigenvectors $\phi_j$ with corresponding eigenvalues $\lambda_j$, and we write $\zeta\sim\mathcal{GP}(0,\mathscr{C})$.

Using Gaussian random elements, we aim to replace the strict inequalities used in the construction of \cref{bound_lemma} and \cref{bound_lemma2} with expected quantities. In the derivations, we used submultiplicativity of the operator norm $\|Ax\|\leq \|A\|\|x\|$. This represents a worst-case scenario. To obtain tighter expected errors, we instead look for ``average-case'' scenarios. Given a bounded operator $A:\mathcal{H}\rightarrow\mathcal{H}$ we define an expected operator norm
\[\setlength\abovedisplayskip{0pt}\setlength\belowdisplayskip{2pt}
\mathbb{E}_{\mathscr{C}}[A] \coloneqq \mathbb{E}\left[{\|A\zeta\|}/{\|\zeta\|}\right],\qquad\zeta\sim \mathcal{GP}(0,\mathscr{C})
\]
and take $\|Ax\|\approx \mathbb{E}_{\mathscr{C}}[A]\|x\|$ in our error bounds. Similar expectations over a probability measure on a function space are used in the machine learning literature~\cite{lanthaler2022error}. 

\subsection{Computation of expected errors}
\label{sec:ee_gp_comp}

We first consider approximations of $\mathbb{E}_{\mathscr{C}}[\koop^j]$ for a Koopman operator $\koop$ acting on $\mathcal{H}$. In the RKHS setting, $\koop$ is replaced by $\koop^*$. Let $\zeta\sim\mathcal{GP}(0,\mathscr{C})$. Since $\|(\mathcal{P}_{\mathcal{V}_N}\koop\mathcal{P}_{\mathcal{V}_N}^*)^j\|\leq\|\koop\|^j<\infty$, by the dominated convergence theorem and convergence of (k)EDMD in the strong operator topology,
\begin{equation}\setlength\abovedisplayskip{6pt}\setlength\belowdisplayskip{6pt}\label{dct_expected_error}
\lim_{N\rightarrow\infty}\mathbb{E}\left[{\|(\mathcal{P}_{\mathcal{V}_N}\koop\mathcal{P}_{\mathcal{V}_N}^*)^j\mathcal{P}_{\mathcal{V}_N}\zeta\|}\big/{\|\mathcal{P}_{\mathcal{V}_N}\zeta\|}\right]=\mathbb{E}_{\mathscr{C}}[\koop^j].
\end{equation}
We can compute the term inside the expectation using only the (k)EDMD matrices $\mathbf{G}$ and $\mathbf{K}$ and pointwise samples of $\zeta$, which follow a multivariate Gaussian distribution. Writing $\mathcal{P}_{\mathcal{V}_N}\zeta=\sum_{i=1}^Nw_i\psi_i$, $\mathbf{w}=(w_1\,\cdots\,w_N)^\top$ is the solution to:
\[\setlength\abovedisplayskip{6pt}\setlength\belowdisplayskip{6pt}
\mathbf{w}=\argmin_{\mathbf{w}'\in\mathbb{C}^N}\Big\|\zeta-\sum_{i=1}^Nw_i'\psi_i\Big\|=\mathbf{G}^{-1}\mathbf{d},\quad \mathbf{d}=(\langle\zeta,\psi_1\rangle\,\cdots\,\langle\zeta,\psi_N\rangle
)^\top.
\]
In the $L^2$ case, these inner products can be computed via a quadrature approximation at the snapshot data points, while in the RKHS case 
$\mathbf{d}=(\zeta(x^{(1)})\,\cdots\,\zeta(x^{(N)}))^\top$. Hence in both cases, $\mathbf{d}$ and $\mathbf{w}$ can be constructed using only pointwise samples of $\zeta$.

The norm in \cref{dct_expected_error} is then given by
\begin{equation}\setlength\abovedisplayskip{6pt}\setlength\belowdisplayskip{6pt}
{\left\|(\mathcal{P}_{\mathcal{V}_N}\koop\mathcal{P}_{\mathcal{V}_N}^*)^j\mathcal{P}_{\mathcal{V}_N}\zeta\right\|}\big/{\|\mathcal{P}_{\mathcal{V}_N}\zeta\|}=\lim_{M\rightarrow\infty}{\|\mathbf{G}^{1/2}\mathbf{K}^j\mathbf{w}\|}\big/{\|\mathbf{G}^{1/2}\mathbf{w}\|},
\end{equation}
where no large-data limit is necessary in the RKHS case. To compute the expectation, we use a Monte Carlo approach. Letting $\{\zeta_k\}_{k=1}^P$ be samples of the chosen Gaussian process and $\mathbf{w}_k$ representations of $\mathcal{P}_{\mathcal{V}_N}\zeta_k$ in the chosen basis, we have that
\begin{equation}\label{eqn:exp_op_norm}\setlength\abovedisplayskip{6pt}\setlength\belowdisplayskip{6pt}
\mathbb{E}_{\mathscr{C}}[\koop^j]\approx\cfrac{1}{P}\sum_{k=1}^P{\|\mathbf{G}^{1/2}\mathbf{K}^j\mathbf{w}_k\|}\big/{\|\mathbf{G}^{1/2}\mathbf{w}_k\|}.
\end{equation}
In particular, almost surely by the strong law of large numbers,
\[\setlength\abovedisplayskip{6pt}\setlength\belowdisplayskip{6pt}
\mathbb{E}_{\mathscr{C}}[\koop^j]=\lim_{N\rightarrow\infty}\lim_{P\rightarrow\infty}\lim_{M\rightarrow\infty}\cfrac{1}{P}\sum_{k=1}^P{\|\mathbf{G}^{1/2}\mathbf{K}^j\mathbf{w}_k\|}\big/{\|\mathbf{G}^{1/2}\mathbf{w}_k\|}.
\]

In practice, we construct a data-driven covariance operator from the (k)EDMD basis. We obtain an orthonormal dictionary from a truncated SVD or PAD of the Gram matrix $\mathbf{G}$. This extracts the dominant dynamical modes (those contributing most to the expected operator norm) while also improving computational efficiency. Next, we choose eigenvalues $\lambda_1\geq \lambda_2\geq \cdots \geq 0$ with $\sum_{j=1}^\infty \lambda_j<\infty$. Together with the SVD/PAD basis, these define a covariance operator $\mathscr{C}$ and a Gaussian process $\zeta\sim\mathcal{GP}(0,\mathscr{C})$ as in \cref{sec:gaussian_process_intro}. Then with respect to the chosen orthonormal basis, $\mathcal{P}_{\mathcal{V}_N}\zeta$ is 
$
\mathbf{w}=(
    \sqrt{\lambda_1}w_1\,\cdots \, \sqrt{\lambda_N}w_N
)^\top,
$
where the coefficients $w_i$ are i.i.d.\ $\mathcal{N}(0,1)$. The expectation $\mathbb{E}_{\mathscr{C}}[\koop^j]$ is then computed exactly as before.

\subsection{Further expectations}

So far, we have addressed the gap between true errors and their bounds arising from inequalities involving the operator norm, such as $\|Ax\|\leq \|A\|\|x\|$. In \cref{bound_lemma} this was the only source of overestimation, allowing a straightforward alteration of \cref{bound_lemma} replacing $\|\mathbf{G}^{1/2}\mathbf{K}\mathbf{G}^{-1/2}\|$ by $\mathbb{E}_{\mathscr{C}}[\koop]$. However, the other error bound formulae suffer from other sources of overestimation. In deriving \cref{bound_lemma2} we also invoked the triangle inequality, while \cref{eqn:pfmd_pointwise_bound} relies on the Cauchy--Schwarz inequality. Although essential for obtaining general upper bounds (as we saw in \cref{sec:decomp_kmd}), these inequalities often lead to conservative estimates (see \cref{fig:erb_cavity}). We therefore extend the expectation-based arguments of the previous section to address these additional sources of overestimation, continuing to use the data-driven covariance operator constructed above.

\begin{algorithm}[t]
\textbf{Input:} Galerkin matrices $\mathbf{G}$, $\mathbf{A}$ and $\mathbf{L}$ for dictionary $\{\psi_n\}_{n=1}^{N}$, a sequence $\lambda_1\geq \lambda_2\geq \dots \geq 0$ with $\sum_{j=1}^{\infty}\lambda_j<\infty$, sample size $P\in\mathbb{N}$, observable $g\in\mathcal{V}_N$, number of timesteps $n\in\mathbb{N}$.\\
\begin{algorithmic}[1]
\STATE Using a truncated SVD or PAD (see \cref{alg:pad}), construct an orthonormal basis and compress $\mathbf{G}$, $\mathbf{A}$, $\mathbf{L}$ and $\mathbf{K}$ to act on that basis (so $\mathbf{G}=\mathbf{I}$, $\mathbf{A}=\mathbf{K}$).
\STATE For each $j,k$, let
$\mathbf{w}_{j,k}\,{=}\,(\sqrt{\lambda_1}w_{j,k,1}\,\cdots \, \sqrt{\lambda_N}w_{j,k,N})^\top$ for i.i.d. $w_{j,k,i}\sim\mathcal{N}(0,1)$.
\STATE Compute the approximations to $\mathbb{E}_{\mathscr{C}}[\koop^j]$ given by \cref{eqn:exp_op_norm} for $j=0,\dots,n-1$.
\STATE Define $\alpha_j=\mathbb{E}_{\mathscr{C}}[\koop^j]\sqrt{(\mathbf{K}^{n-j-1}\mathbf{g})^*(\mathbf{L}-\mathbf{K}^*\mathbf{K})(\mathbf{K}^{n-j-1}\mathbf{g})}$ for $j=0,\dots,n-1$.
\STATE Compute $E_n\approx \frac{1}{P}\sum_{k=1}^P\big[\big\|\sum_{j=0}^{n-1}\alpha_j\mathbf{w}_{j,k}/\|\mathbf{w}_{j,k}\|\big\|\big]$.
\end{algorithmic} \textbf{Output:} $E_n$, the expected $L^2$ error of $\|\koop^n g-{\Psi}\mathbf{K}^n\mathbf{g}\|$.
\caption{Computation of expected $L^2$ errors for the KMD.}\label{alg:expected_error_all_l2}
\end{algorithm}

\begin{algorithm}[t]
\textbf{Input:} Galerkin matrices $\mathbf{G}$, $\mathbf{A}$ and $\mathbf{R}$ for dictionary $\{\mathfrak{K}_{x^{(n)}}\}_{n=1}^{N}$, a sequence $\lambda_1\geq \lambda_2\geq \dots \geq 0$ with $\sum_{j=1}^{\infty}\lambda_j<\infty$, sample size $P\in\mathbb{N}$, starting point $x_0$, observable $g\in\mathcal{H}$, number of timesteps $n\in\mathbb{N}$.\\
\begin{algorithmic}[1]
\STATE Let $\mathbf{b}=(\mathfrak{K}(x_0,x^{(1)})\,\cdots \,\mathfrak{K}(x_0,x^{(N)})
)^\top$, $\mathbf{c}=\mathbf{G}^{-1}\mathbf{b}$ and  
$\delta=\sqrt{\mathfrak{K}(x_0,x_0)-\mathbf{c}^*\mathbf{b}}$.
\STATE Using a truncated SVD or PAD (see \cref{alg:pad}), construct an orthonormal basis and compress $\mathbf{G}$, $\mathbf{A}$, $\mathbf{R}$ and $\mathbf{K}$ to act on that basis (so $\mathbf{G}=\mathbf{I}$, $\mathbf{A}=\mathbf{K}$).
\STATE For each $j,k$, let
$\mathbf{w}_{j,k}\,{=}\,(\sqrt{\lambda_1}w_{j,k,1}\,\cdots \, \sqrt{\lambda_N}w_{j,k,N})^\top$ for i.i.d. $w_{j,k,i}\sim\mathcal{N}(0,1)$.
\STATE Compute the approximations to $\mathbb{E}_{\mathscr{C}}[(\koop^*)^j]$ and $\mathbb{E}_{\mathscr{C}}[g]$ given by \cref{eqn:exp_op_norm,monte_carlo_g}.
\STATE Define $\alpha_j=\mathbb{E}_{\mathscr{C}}[(\koop^*)^j]\sqrt{(\mathbf{K}^{n-j-1}\mathbf{c})^*(\mathbf{R}-\mathbf{K}^*\mathbf{K})(\mathbf{K}^{n-j-1}\mathbf{c})}$ for $j=0,\dots,n-1$ and $\alpha_n=\mathbb{E}_{\mathscr{C}}[(\koop^*)^n]\delta$.
\STATE Compute $E_n\approx \frac{1}{P}\sum_{k=1}^P\big[\big\|\sum_{j=0}^{n}\alpha_j\mathbf{w}_{j,k}/\|\mathbf{w}_{j,k}\|\big\|\big]$.
\end{algorithmic} \textbf{Output:} $E_n$, the expected RKHS-norm error of $\|(\koop^*)^n\mathfrak{K}_{x_0}-\mathcal{P}_{\mathcal{V}_N}^*(\mathcal{P}_{\mathcal{V}_N}\koop^*\mathcal{P}_{\mathcal{V}_N}^*)^nc\|$ and $\mathbb{E}_{\mathscr{C}}[g]E_n$, the expected pointwise error of $|g(x_n)-\Gamma(g,x_0,n)|$.
\caption{Computation of expected RKHS-norm and pointwise errors for the PFMD.}\label{alg:expected_error_all_kernel}
\end{algorithm}

In \cref{bound_lemma2}, we applied the triangle inequality to terms of the form
\[\setlength\abovedisplayskip{6pt}\setlength\belowdisplayskip{6pt}
h_j\coloneqq(\mathcal{P}_{\mathcal{V}_N}^*\mathcal{P}_{\mathcal{V}_N}\koop)^j\mathcal{Q}_{\mathcal{V}_N}^*\mathcal{Q}_{\mathcal{V}_N}\koop^{n-j}\mathcal{P}_{\mathcal{V}_N}^*g,\quad j=0,\dots,n-1.
\]
By using a first-order approximation as in \cref{sec:comp_erbs} and the expected operator norm, we have that
\[\setlength\abovedisplayskip{6pt}\setlength\belowdisplayskip{6pt}
\|h_j\|\approx \mathbb{E}_{\mathscr{C}}[\koop^j]\sqrt{(\mathbf{K}^{n-j-1}\mathbf{g})^*(\mathbf{L}-\mathbf{K}^*\mathbf{G}\mathbf{K})(\mathbf{K}^{n-j-1}\mathbf{g})}\coloneqq \alpha_j.
\]
To obtain an approximation of the error after applying the triangle inequality, we assume that the terms are given by Gaussian processes normalized by the $\alpha_j$, and compute the expectation. In particular, we take the resulting error to be
\[\setlength\abovedisplayskip{6pt}\setlength\belowdisplayskip{6pt}
E_n\approx \mathbb{E}\Big[\Big\|\sum_{j=0}^{n-1}\alpha_j\mathcal{P}_{\mathcal{V}_N}\zeta_j/\|\mathcal{P}_{\mathcal{V}_N}\zeta_j\|\Big\|\Big],\quad\zeta_j\sim\mathcal{GP}(0,\mathscr{C}).
\]
As in \cref{sec:ee_gp_comp}, we approximate this as 
\begin{equation}\setlength\abovedisplayskip{6pt}\setlength\belowdisplayskip{6pt}\label{eqn:exp_tri_eq}
E_n\approx \cfrac{1}{P}\sum_{k=1}^P\Big\|\sum_{j=0}^{n-1}\alpha_j\mathbf{G}^{1/2}\mathbf{w}_{j,k}/\|\mathbf{G}^{1/2}\mathbf{w}_{j,k}\|\Big\|.
\end{equation}
Combining \cref{KMD_bound_first_order_2,eqn:exp_tri_eq,eqn:exp_op_norm} leads to \cref{alg:expected_error_all_l2}.

\begin{figure}[t]
    \centering
    \includegraphics[height=0.3\linewidth]{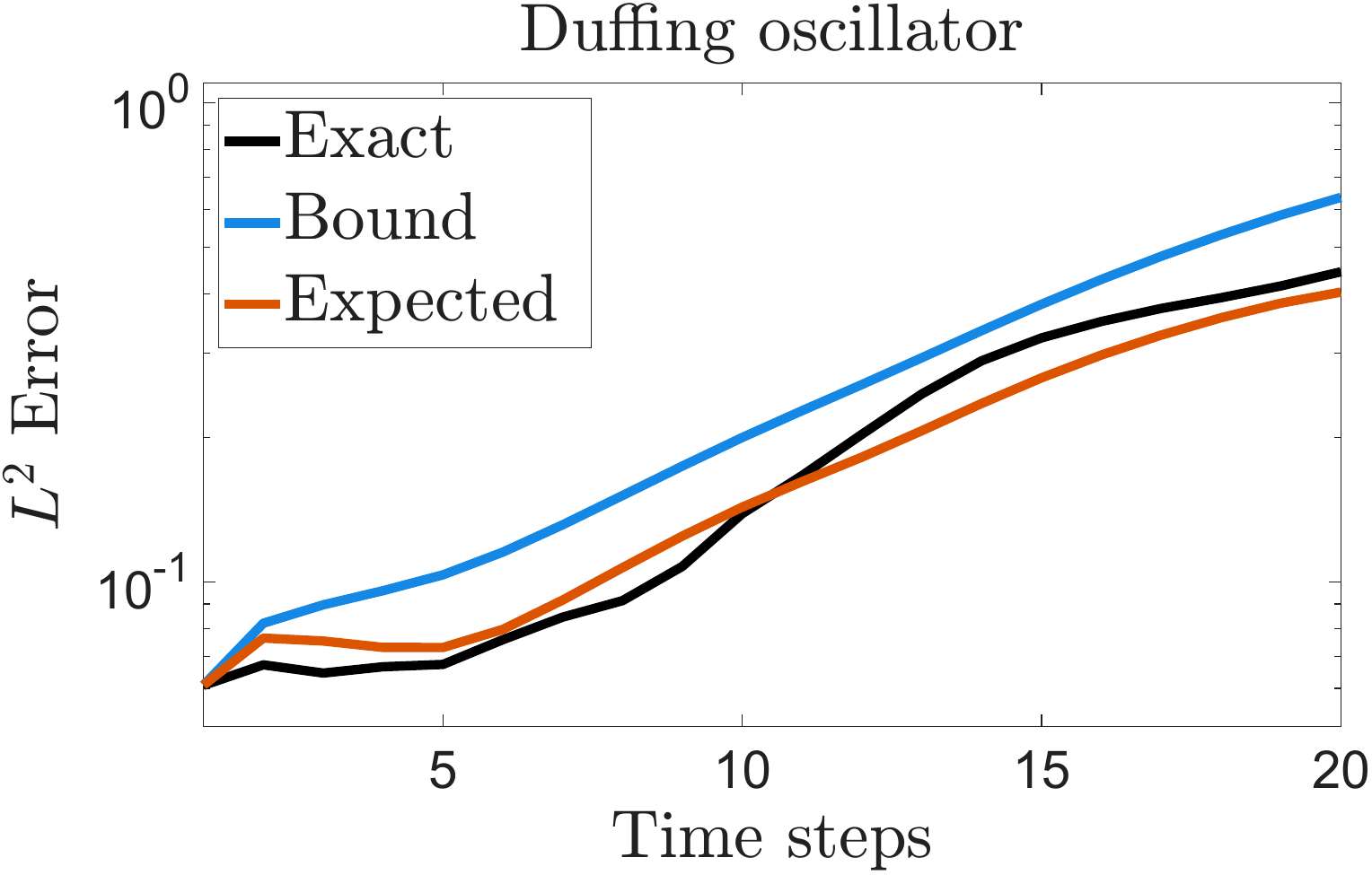}\hfill
    \includegraphics[height=0.3\linewidth]{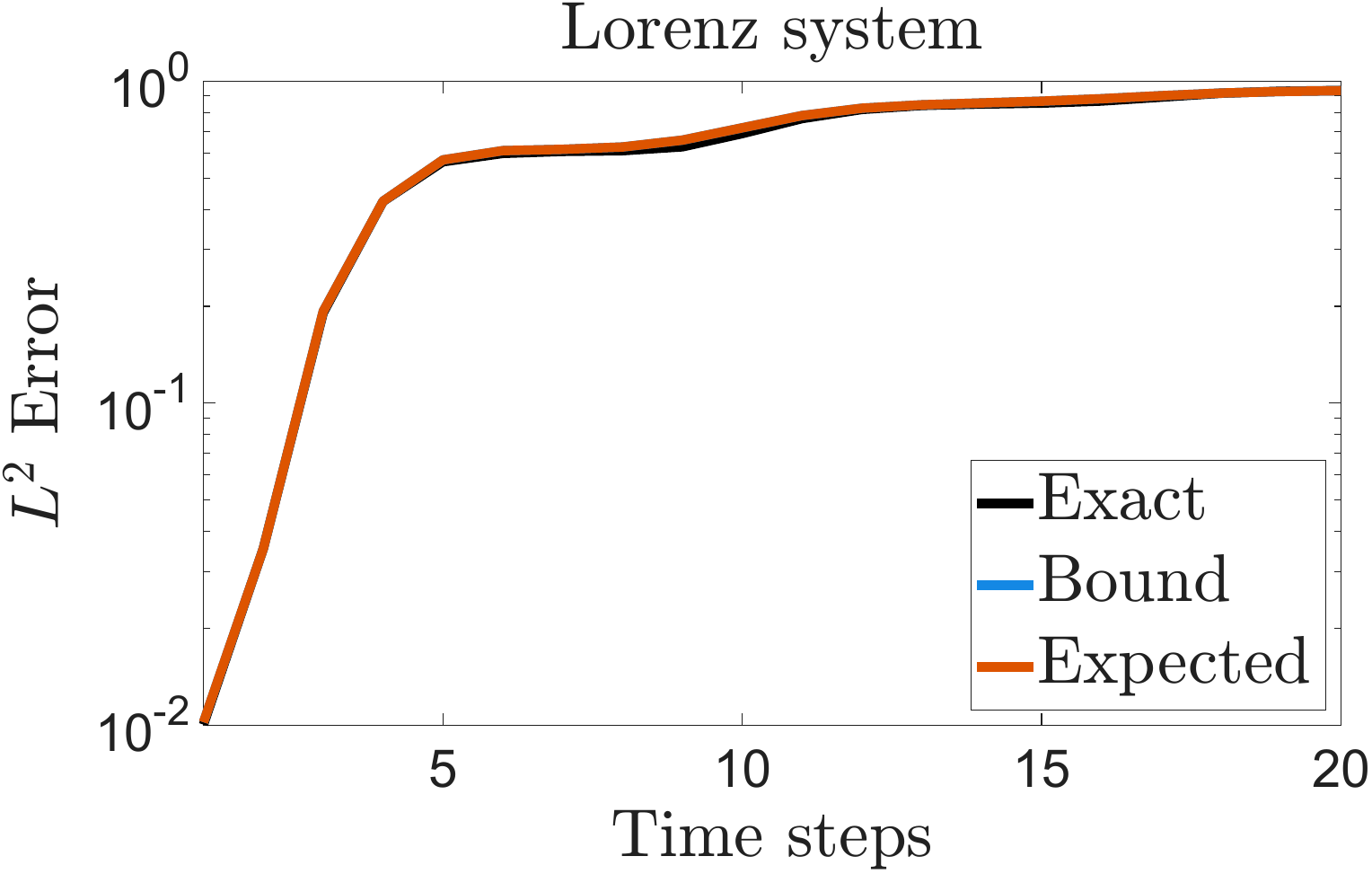}\\\vspace{0.3cm}
    \includegraphics[height=0.3\linewidth]{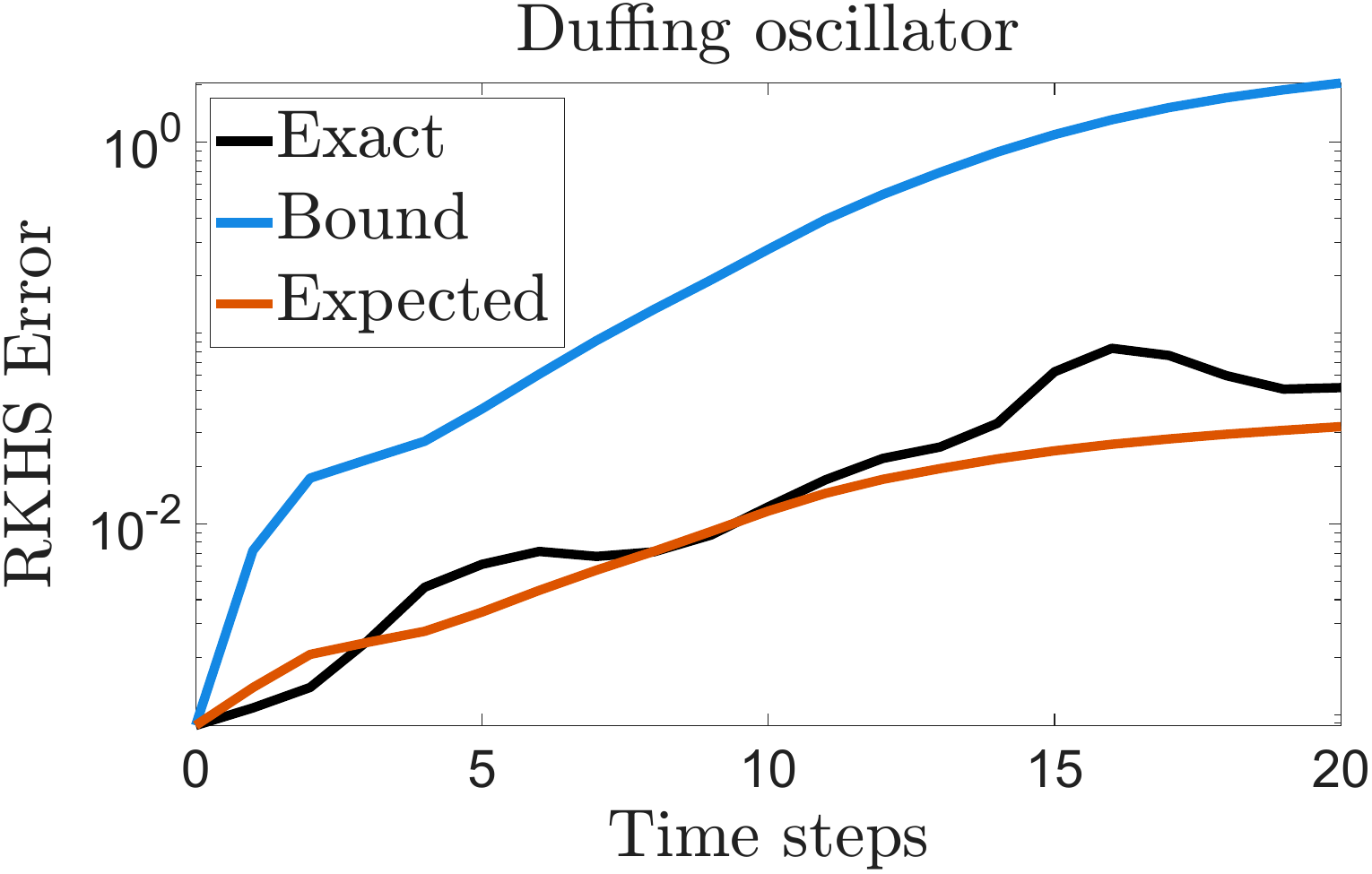}\hfill
    \includegraphics[height=0.3\linewidth]{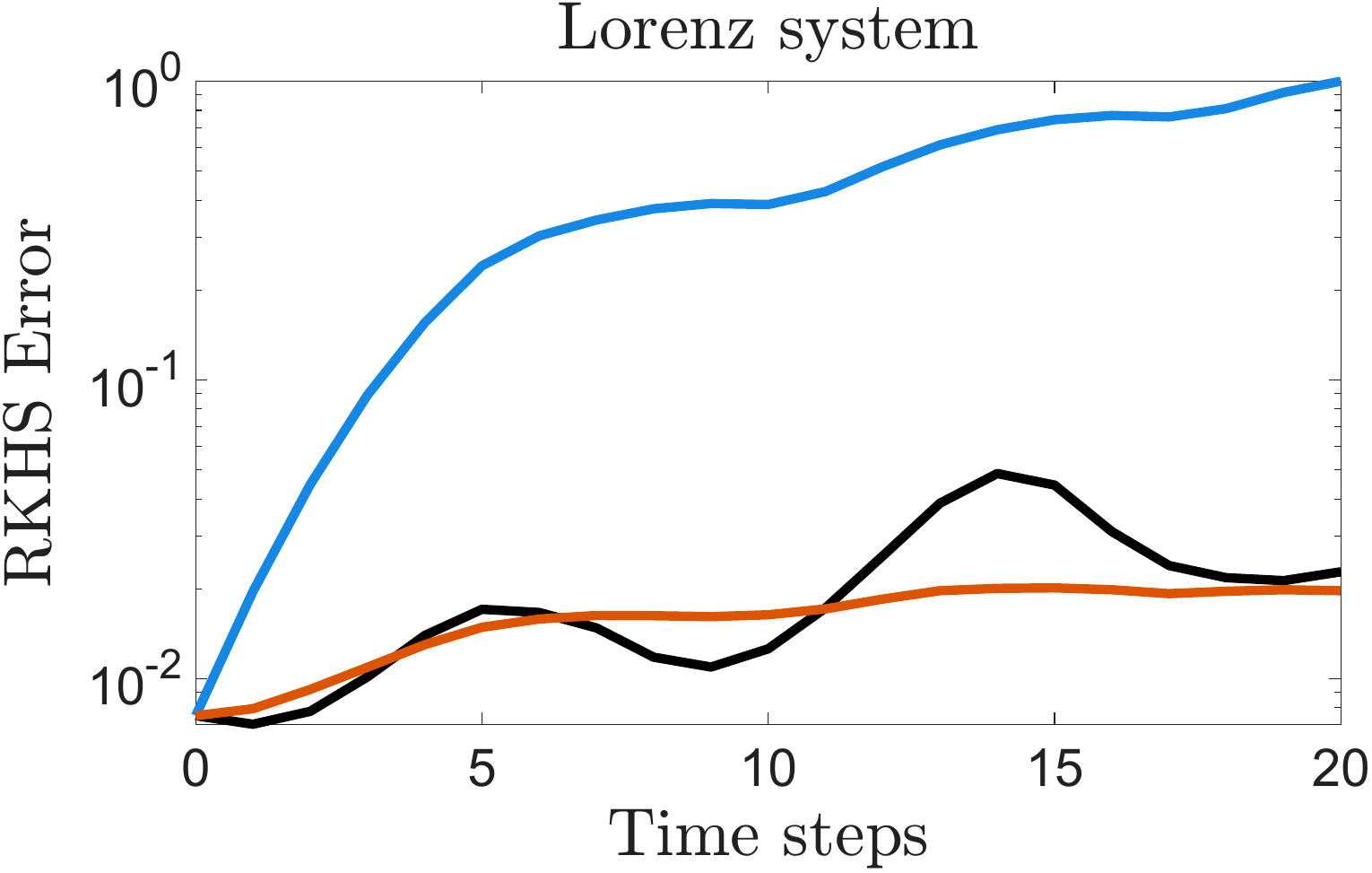}
    \caption{$L^2$ (top) and RKHS (bottom) norm forecast errors, error bounds and expected bounds for the Duffing oscillator (left) and Lorenz system (right), along with variance error bars including three standard deviations.}
    \label{fig:exp_er}
\end{figure}

Similarly, in the derivation of \cref{eqn:pfmd_pointwise_bound} for RKHS we needed to bound the expression $|\langle g,r\rangle|$ where
$
r=(\koop^*)^n\mathfrak{K}_{x_0}-\mathcal{P}_{\mathcal{V}_N}^*(\mathcal{P}_{\mathcal{V}_N}\koop^*\mathcal{P}_{\mathcal{V}_N}^*)^nc.
$
We applied the Cauchy--Schwarz inequality to obtain $|\langle g,r\rangle|\leq \|g\|\|r\|$ and then bounded $\|r\|$ in terms of the initialization and projection errors. This represents the worst-case scenario for the error. Instead, we utilize an expectation-based approach. Given $g\in\mathcal{H}$, we define
\[\setlength\abovedisplayskip{6pt}\setlength\belowdisplayskip{6pt}
\mathbb{E}_{\mathscr{C}}\left[g\right]=\mathbb{E}\left[{|\langle g,\zeta\rangle |}\big/{\|\zeta\|}\right],\quad \zeta\sim\mathcal{GP}(0,\mathscr{C}).
\]
Using the data-driven covariance operator, $\zeta$ is approximated by a linear combination of kernel functions $\mathcal{P}_{\mathcal{V}_N}^*\mathcal{P}_{\mathcal{V}_N}\zeta$, and so $\langle g,\mathcal{P}_{\mathcal{V}_N}^*\mathcal{P}_{\mathcal{V}_N}\zeta\rangle$ is a linear combination of pointwise evaluations of $g$. We approximate
\begin{equation}\setlength\abovedisplayskip{6pt}\setlength\belowdisplayskip{6pt}\label{monte_carlo_g}
\mathbb{E}_{\mathscr{C}}\left[g\right]\approx \cfrac{1}{P}\sum_{k=1}^P{|\langle g,\mathcal{P}_{\mathcal{V}_N}^*\mathcal{P}_{\mathcal{V}_N}{\zeta}_k\rangle|}\big/{\|\mathbf{G}^{1/2}\mathbf{w}_k\|}.
\end{equation}
We then take $|\langle g,r\rangle|\approx \mathbb{E}_{\mathscr{C}}[g]\|r\|$ in \cref{eqn:pfmd_pointwise_bound}. \cref{alg:expected_error_all_kernel} combines all of \cref{eqn:exp_op_norm,eqn:exp_tri_eq,monte_carlo_g} with \cref{rkhs_error_bound}.

\subsection{Numerical examples}

We return to the Duffing and Lorenz systems to illustrate how \cref{alg:expected_error_all_l2,alg:expected_error_all_kernel} improve upon the error bounds computed in \cref{sec:erb_numex}. The expected $L^2$-errors for the predicted evolution of the observable $g(x)=x_1$ are shown in the top panels of \cref{fig:exp_er}, alongside the strict error bounds from \cref{sec:erb_numex} and the exact $L^2$-errors; we use the SVD rather than PAD truncated systems, as the PAD bounds were essentially tight. We take $\lambda_j^2=e^{-j/10^3}$ and use an orthonormal basis obtained from the SVD of the Gram matrix. Since the Koopman operator for the Lorenz system is unitary, $\mathbb{E}_{\mathscr{C}}[\koop]=\|\koop\|=1$ and so the error bounds and expected errors are the same. For the Duffing oscillator, although the expected error is no longer a strict upper bound, it provides an improvement in tracking the true error compared to the original bounds.

The same behavior is observed in the RKHS setting, as shown in the bottom panels of \cref{fig:exp_er}. We plot the exact RKHS-norm errors, the corresponding error bounds, and the expected errors using \cref{alg:expected_error_all_kernel}. As in the $L^2$ case, the expected errors yield a substantial improvement. 

Finally, we return to the cavity flow example that motivated this section. In addition to the strict error bounds shown in \cref{fig:erb_cavity}, we compute expected errors using \cref{alg:expected_error_all_l2}. As seen in \cref{fig:cavity_flow_expected_errors}, the $L^2$ expected error remains close to the true error over the entire time horizon, representing a substantial improvement over the strict bounds. This is particularly notable given the high dimensionality and complex chaotic dynamics of the flow. The expected error formulation extends straightforwardly to computing variances of the relevant terms; as such, we plot variance error bars for the expected errors encompassing three standard deviations. We see that the exact trajectory lies inside these expected error bars for the majority of the time.

\begin{figure}[t]
    \centering
    \includegraphics[height=0.3\linewidth]{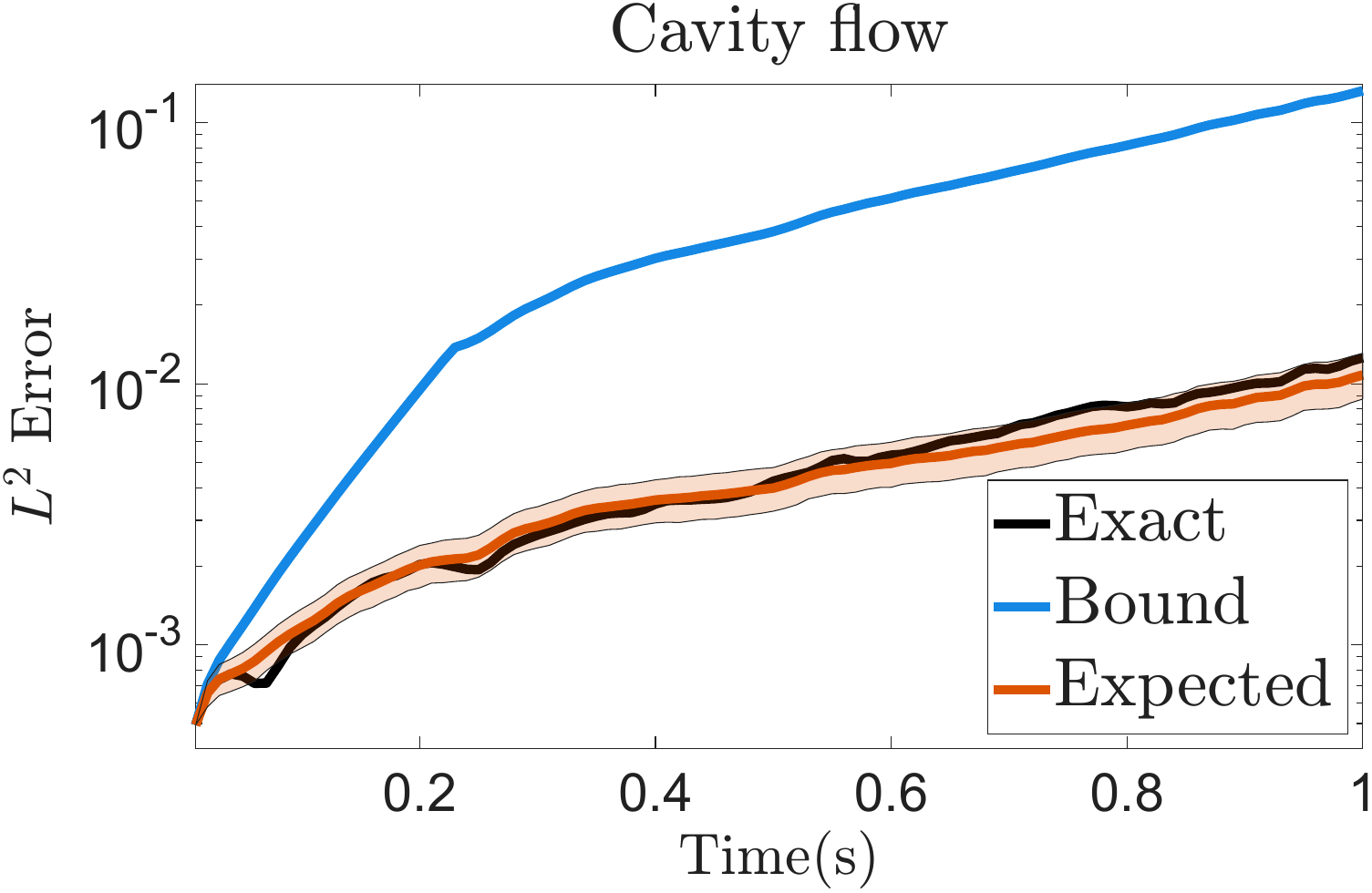}
    \caption{Exact error, error bounds, expected error and three standard deviation error bars for the cavity flow system. The expected error tracks the error much more closely than the strict bound.}
    \label{fig:cavity_flow_expected_errors}
\end{figure}

\section{Example: The Pluto--Charon system}\label{sec:num_examples}

As a more challenging example, we consider the Pluto--Charon system, which consists of Pluto and its moons Charon, Styx, Nix, Hydra, and Kerberos. Charon has mass an eighth of Pluto's, so they form a binary system, giving rise to complex dynamics \cite{showalter2015resonant}. The New Horizons spacecraft flew past the system in July 2015, collecting extensive observational data \cite{stern2018pluto}. To model these dynamics, we use data from \cite{giorgini2025website}, which fits numerically integrated trajectories of planets, satellites, and small bodies in the Solar System to observational measurements. The dataset comprises positions and velocities of the six bodies in the 
$x$, $y$ and $z$ directions relative to the Solar System barycentre, yielding a $36$-dimensional system. Our goal is to model simultaneously the dynamics of Pluto and its five moons.

\subsection{Computation of error bounds}

We collect 80 years of daily trajectory snapshots of the Pluto--Charon system to train the model. Each component of the data is normalized using its mean and standard deviation. Using kernelized EDMD, we construct an approximation of the Koopman operator with a kernel given by a product of scaled Mat\'ern kernels for each of the six bodies. The resulting operator advances the system in $10$-day steps. We test the model by extrapolating over a further $72$ unseen $10$-day intervals. For the predicted trajectories, we compute RKHS-norm and pointwise error bounds using \cref{alg:specrkhs_pfmd}, and expected errors using \cref{alg:expected_error_all_kernel}.

{\begin{figure}[t]
    \centering
        \raisebox{-0.5\height}{\includegraphics[height=0.25\linewidth]{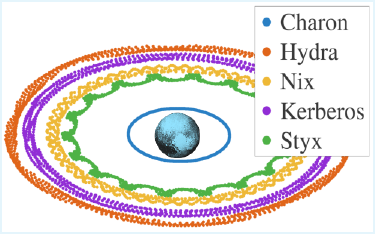}}\hfill
        \raisebox{-0.5\height}{\includegraphics[height=0.3\linewidth]{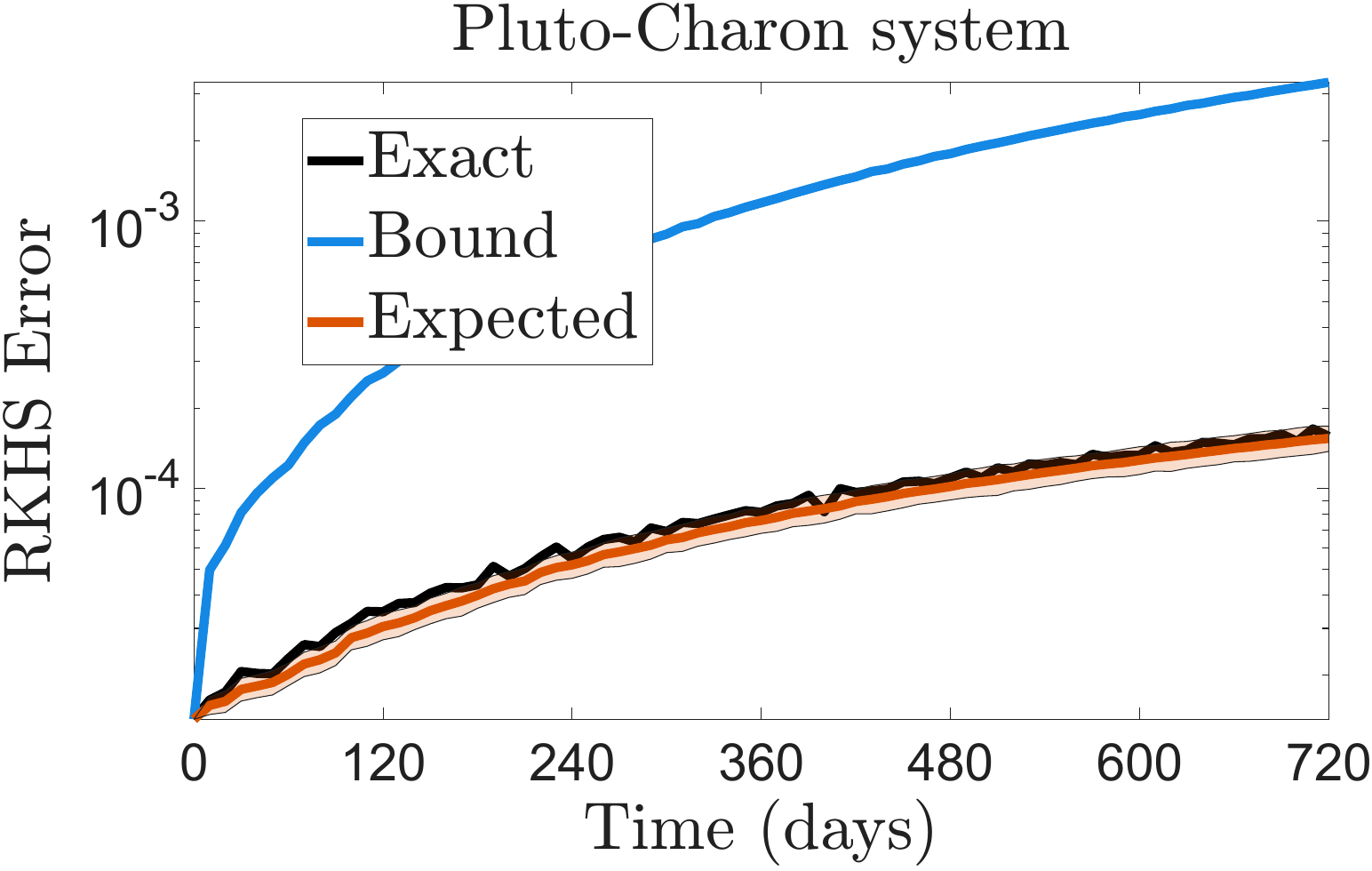}}
    \caption{Left: Orbits of moons in the Pluto--Charon system with Pluto at the origin. Right: RKHS-norm errors, strict error bounds, expected errors and three standard deviation error bars for the Pluto--Charon system. While the strict bounds overestimate the true error, the expected errors track it closely.}
    \label{fig:pluto_charon_system}
\end{figure}}

The results are shown in \cref{fig:pluto_charon_system,fig:pluto_charon_pred}. The right panel shows the RKHS errors for the evolution of $\mathfrak{K}_{x_0}$, where $x_0$ is the starting point of the extrapolated trajectory, together with the associated error bounds and expected errors. While the strict bound is an order of magnitude larger, the expected error closely tracks the true error. In \cref{fig:pluto_charon_pred}, we examine \emph{pointwise} errors for selected components of the dynamics, along with the corresponding pointwise error bounds and expected errors. The gap between exact (in dashed black) and expected pointwise errors (in dashed red) is larger than for the RKHS-norm errors, since the expected pointwise errors are obtained by scaling the expected norm errors by the constant $\mathbb{E}_{\mathscr{C}}[g]$, whereas the exact pointwise errors exhibit greater variability.

\begin{figure}[t]
    \centering
    \begin{minipage}{0.05\linewidth}
        \centering
        \rotatebox{90}{\textbf{Position}}
    \end{minipage}
    \begin{minipage}[c]{0.06\linewidth}
    \centering
        \vspace{0.3cm}
    \rotatebox{90}{\small{Predictions}}

    \vspace{0.6cm}

    \rotatebox{90}{\small{Rel. Ptwise Err.}}

    \end{minipage}
    \begin{minipage}{0.85\linewidth}
        \centering
        \includegraphics[width=0.32\linewidth]{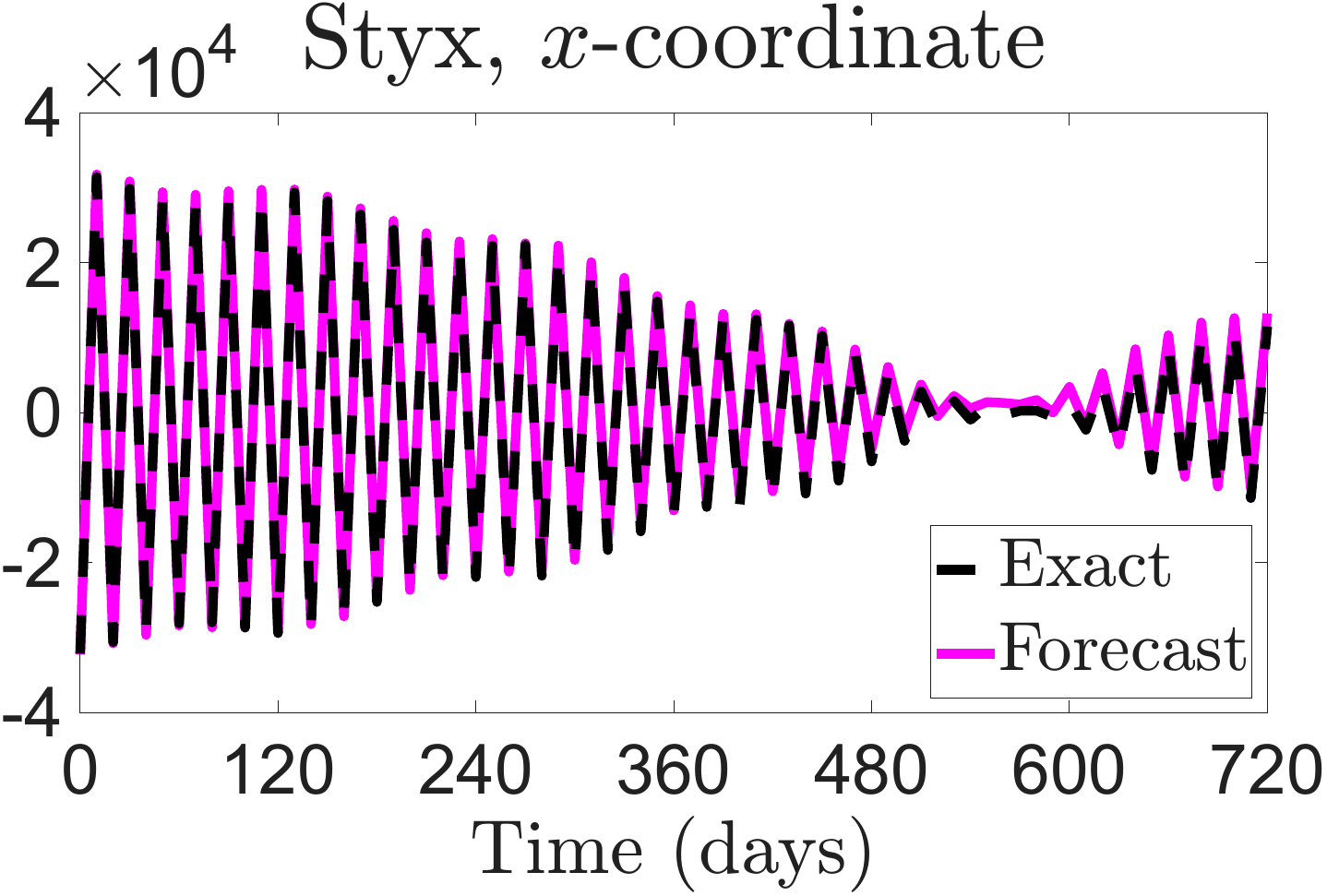}
        \includegraphics[width=0.32\linewidth]{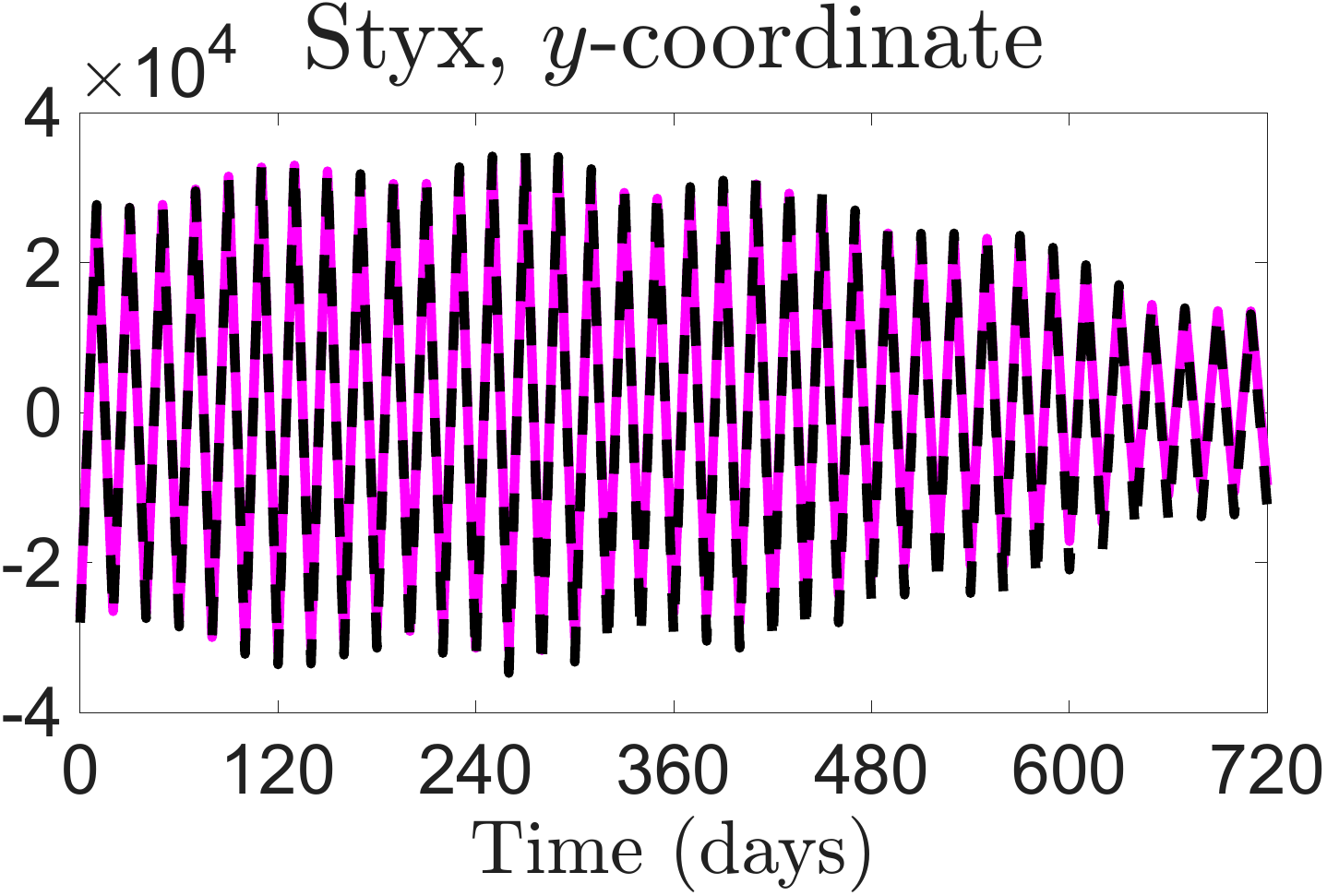}
        \includegraphics[width=0.32\linewidth]{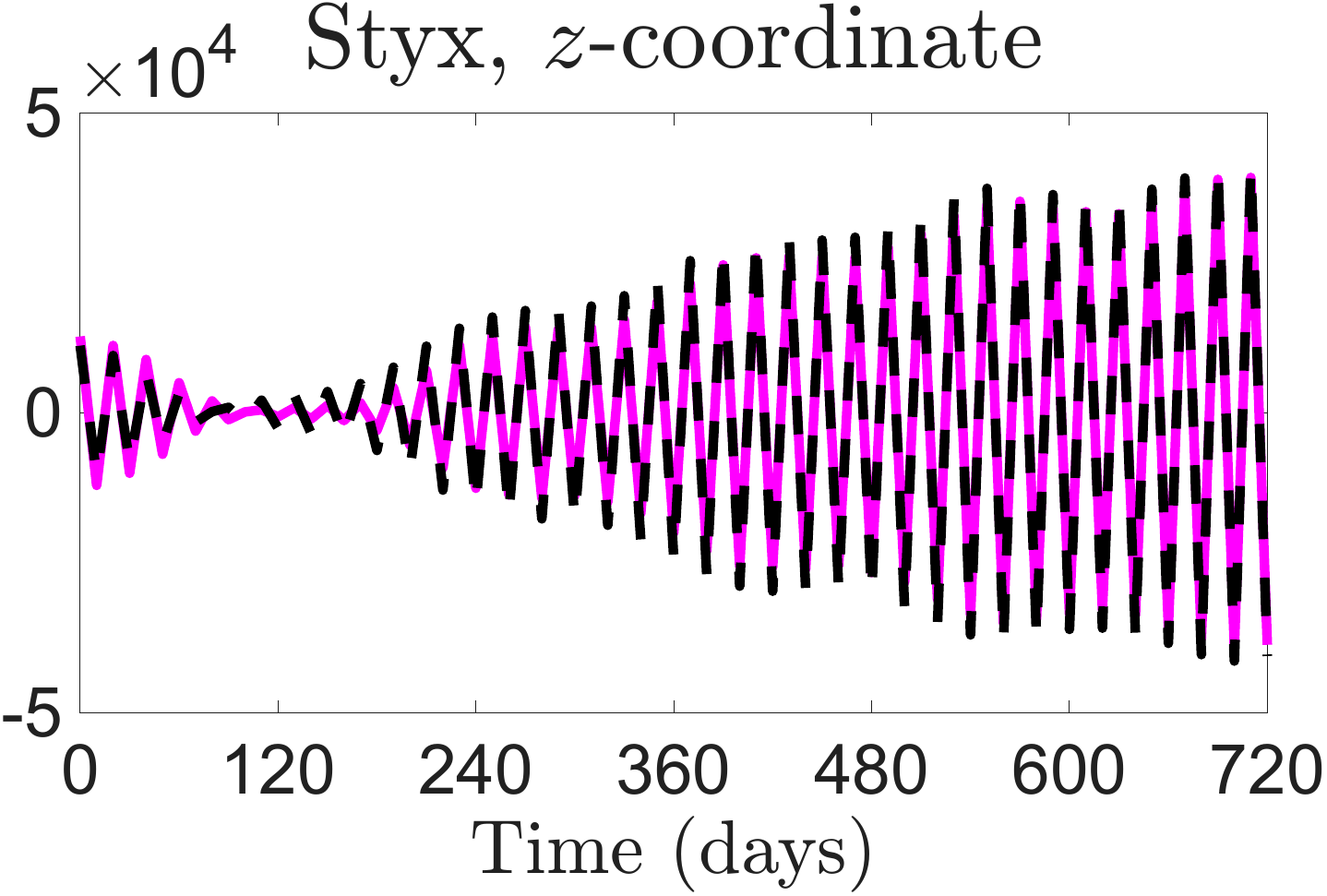}\\
        \vspace{0.2cm}
        \includegraphics[width=0.32\linewidth]{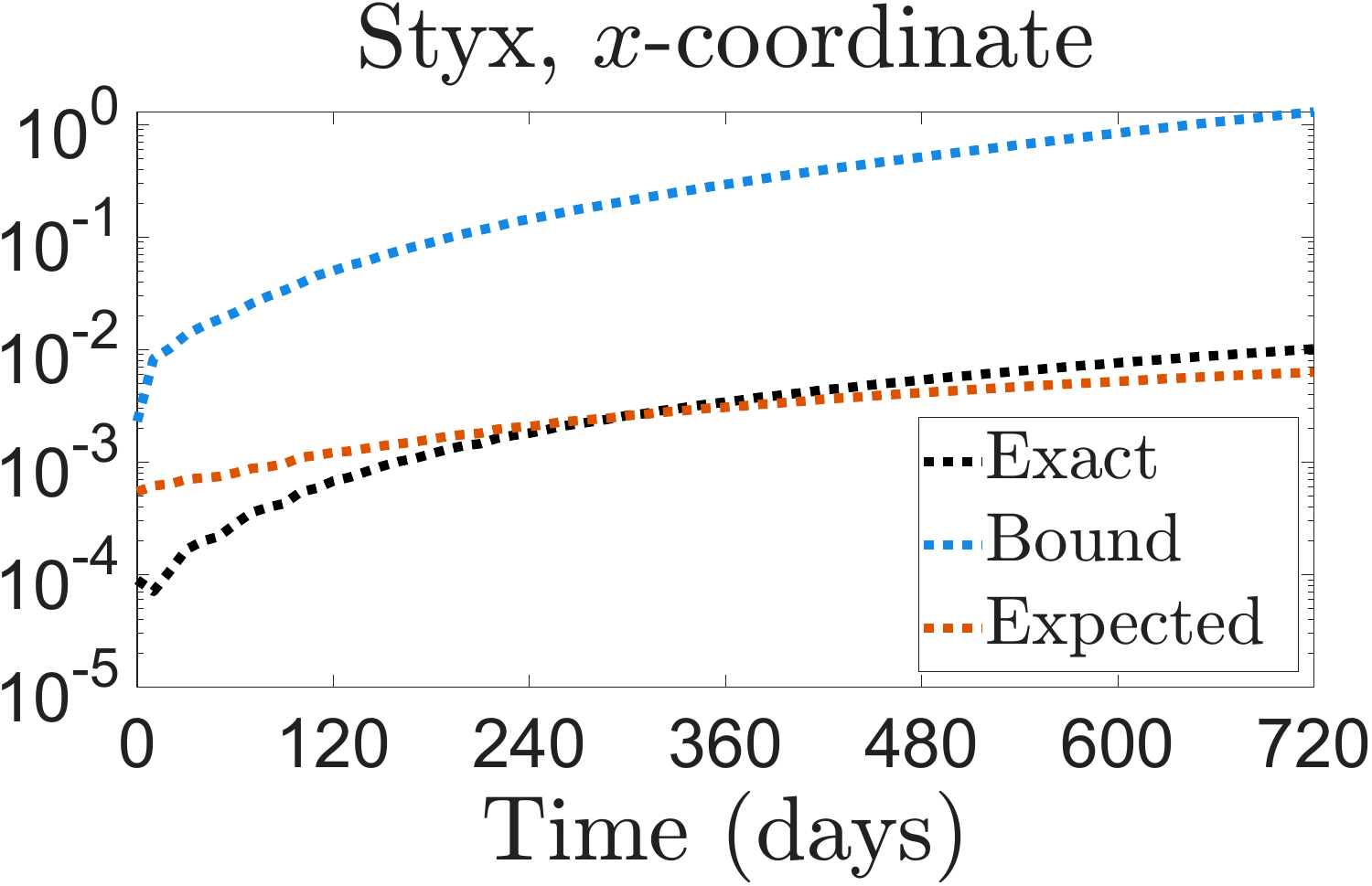}
        \includegraphics[width=0.32\linewidth]{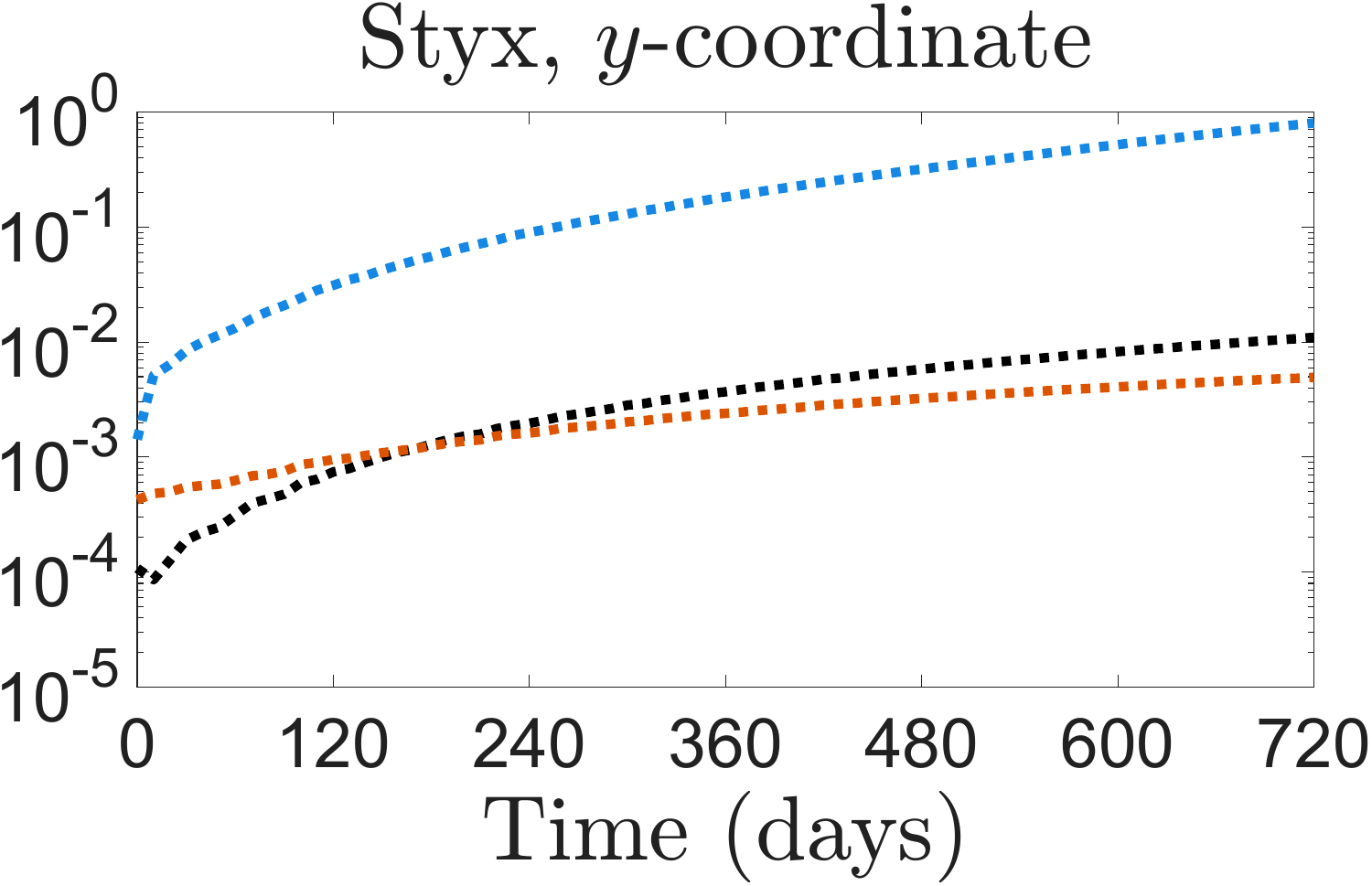}
        \includegraphics[width=0.32\linewidth]{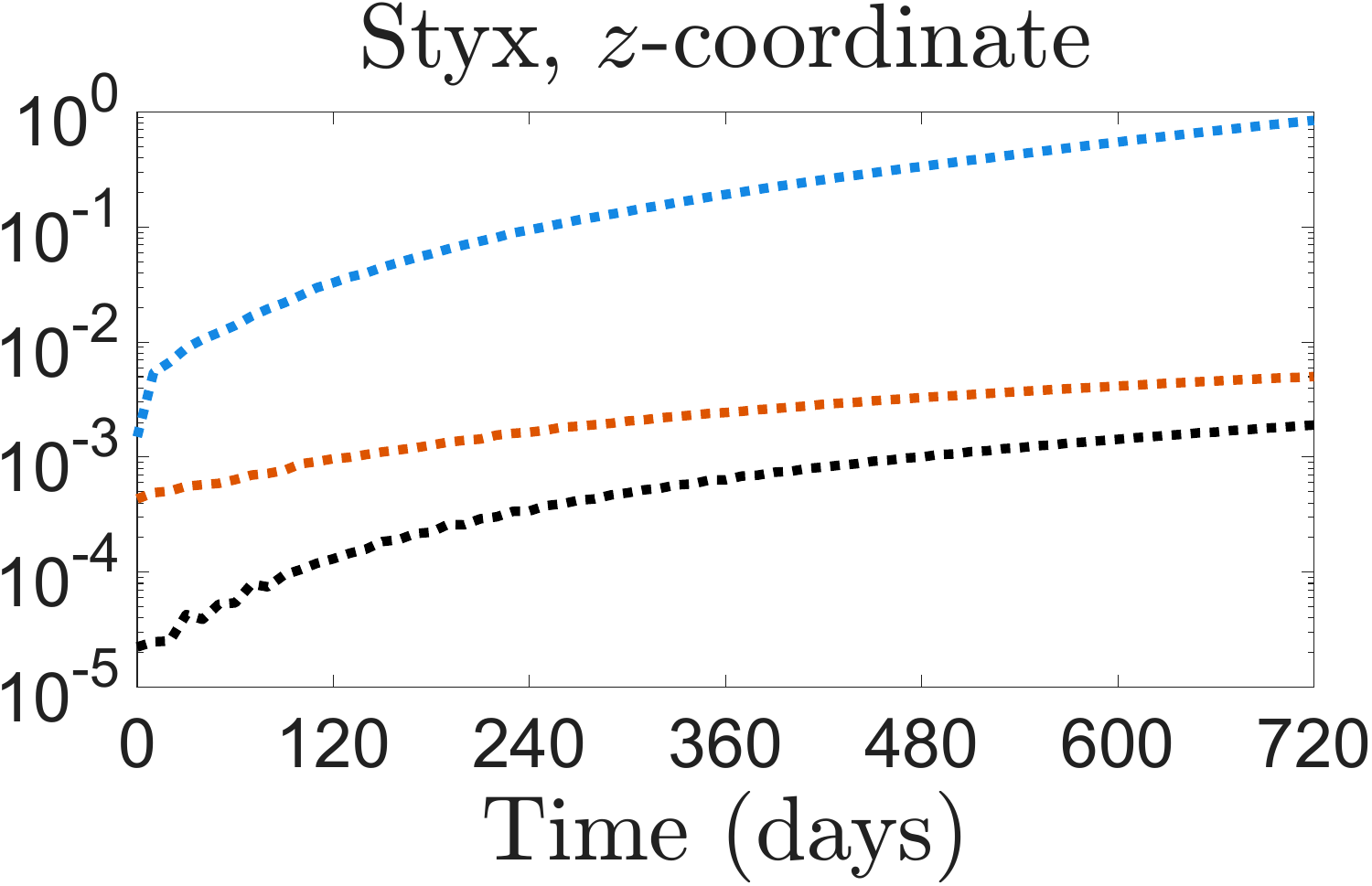}
    \end{minipage}

    \vspace{0.1cm}
\noindent\makebox[\linewidth]{%
  \leaders\hbox{\rule{2mm}{0.4pt}\hspace{1.5mm}}\hfill
}

    \vspace{0.1cm}
    \begin{minipage}{0.05\linewidth}
        \centering
        \rotatebox{90}{\textbf{Velocity}}
    \end{minipage}
    \begin{minipage}[c]{0.06\linewidth}
    \centering
    \vspace{0.3cm}
    \rotatebox{90}{\small{Predictions}}

    \vspace{0.6cm}

    \rotatebox{90}{\small{Rel. Ptwise Err.}}
    \end{minipage}
    \begin{minipage}{0.85\linewidth}
        \centering
        \includegraphics[width=0.32\linewidth]{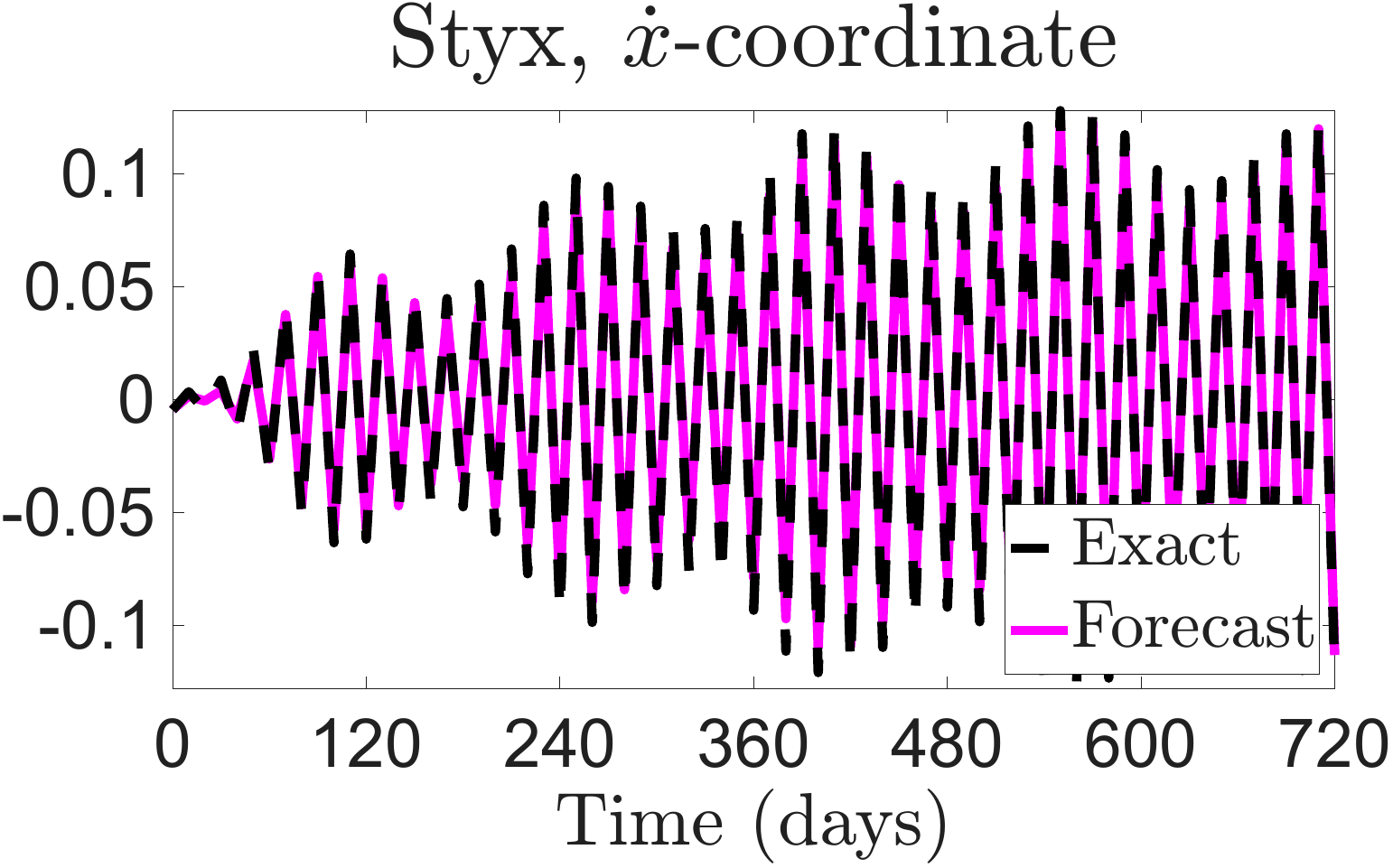}
        \includegraphics[width=0.32\linewidth]{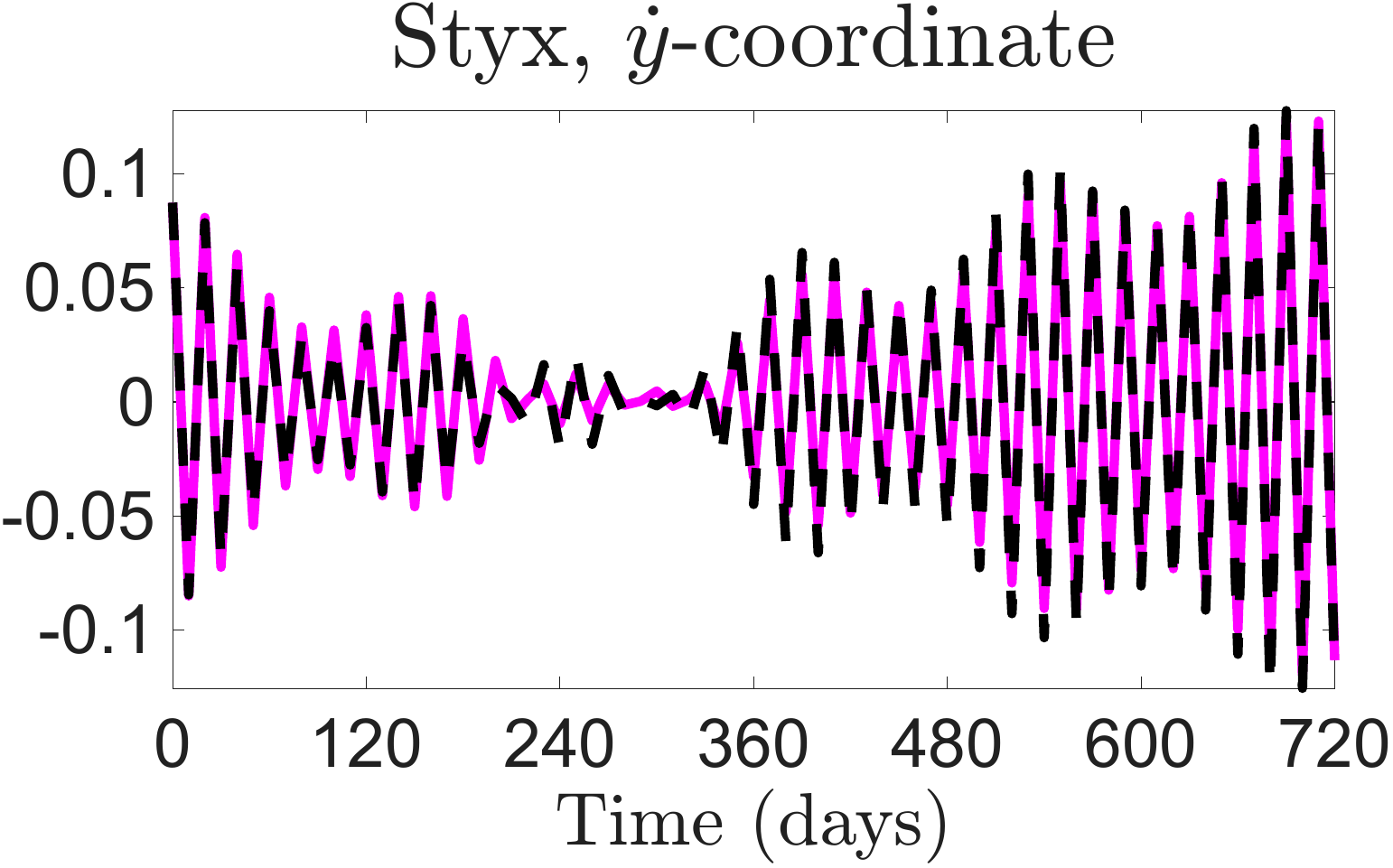}
        \includegraphics[width=0.32\linewidth]{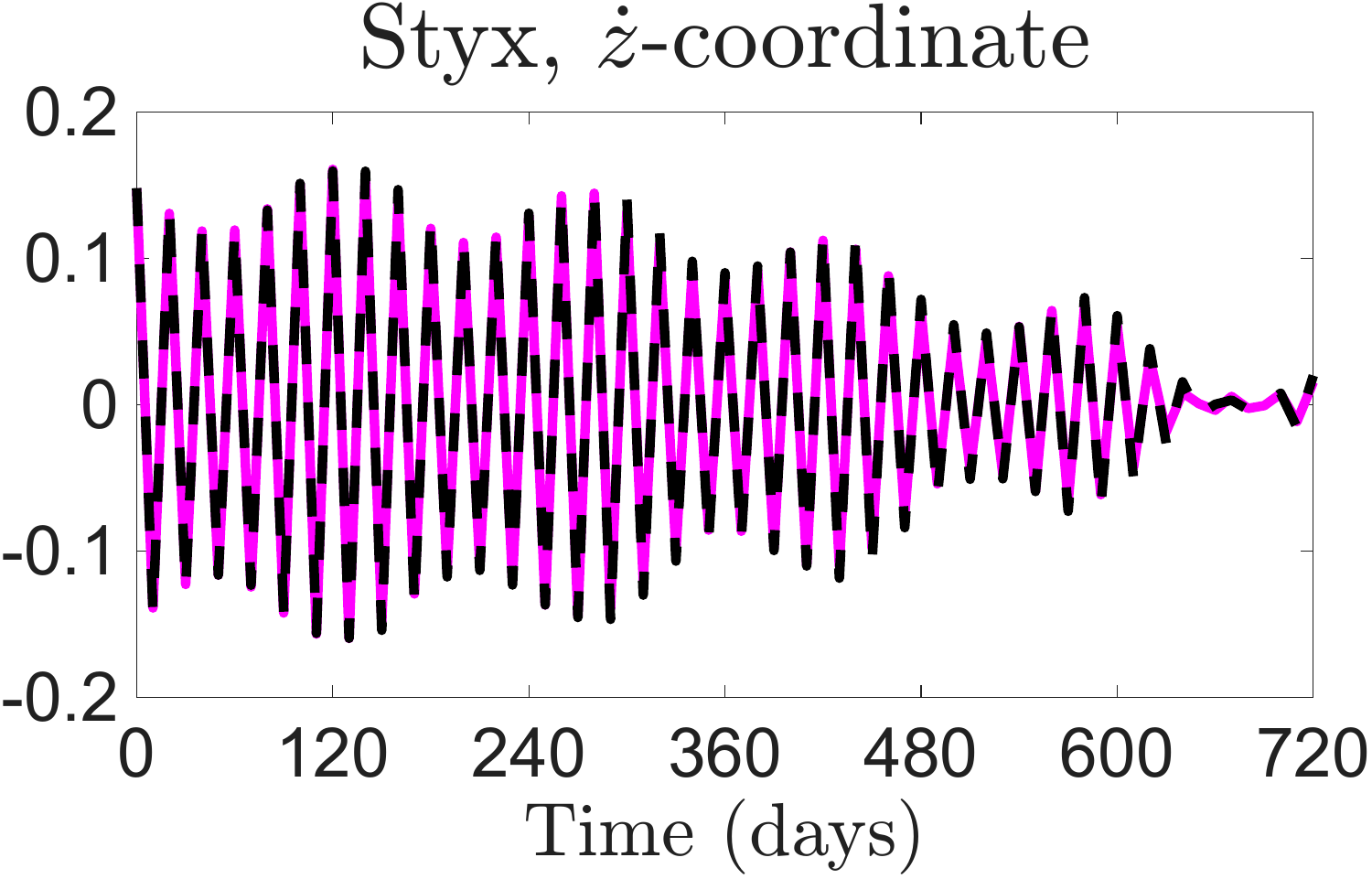}\\
        \vspace{0.2cm}
        \includegraphics[width=0.32\linewidth]{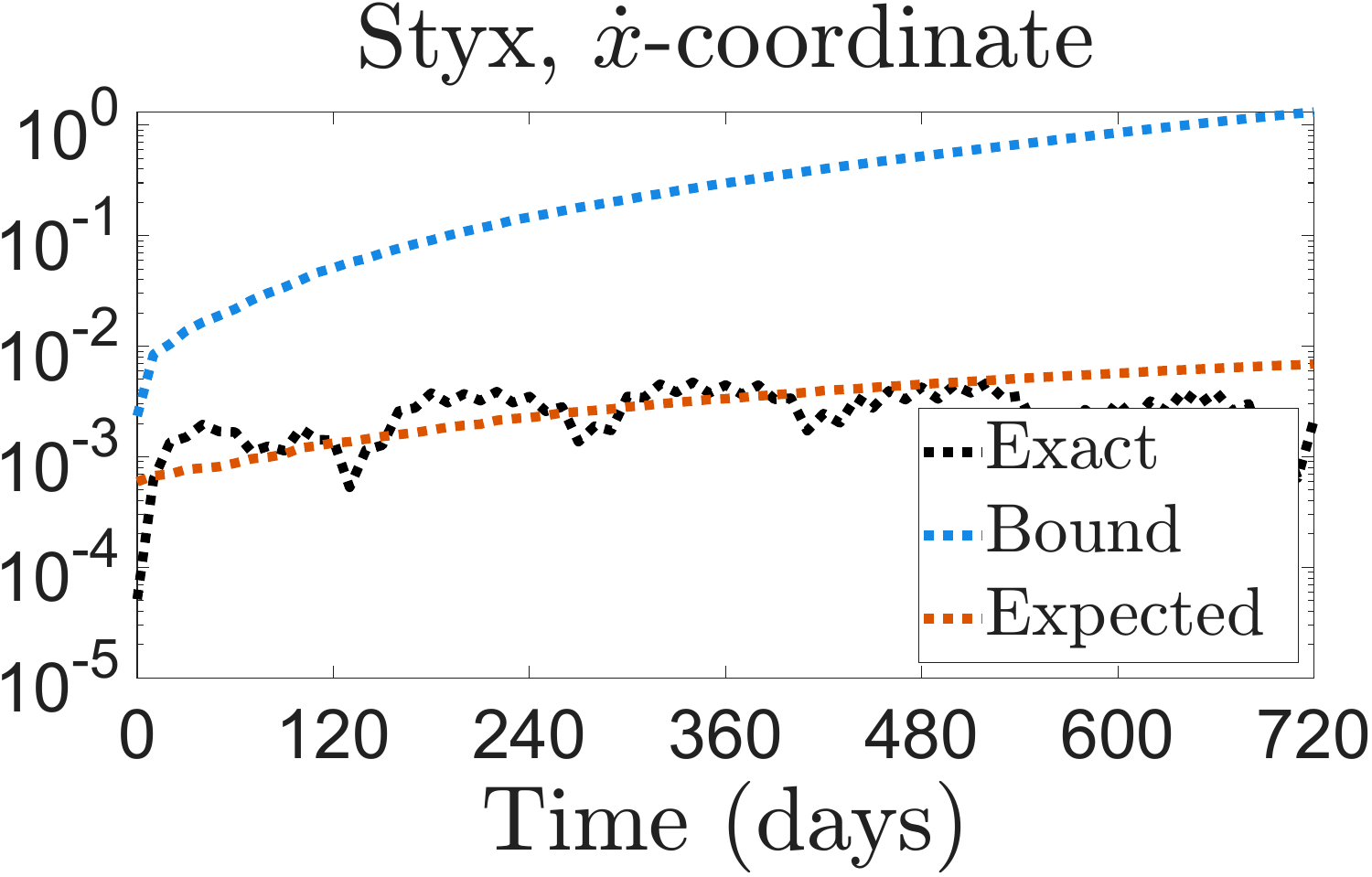}
        \includegraphics[width=0.32\linewidth]{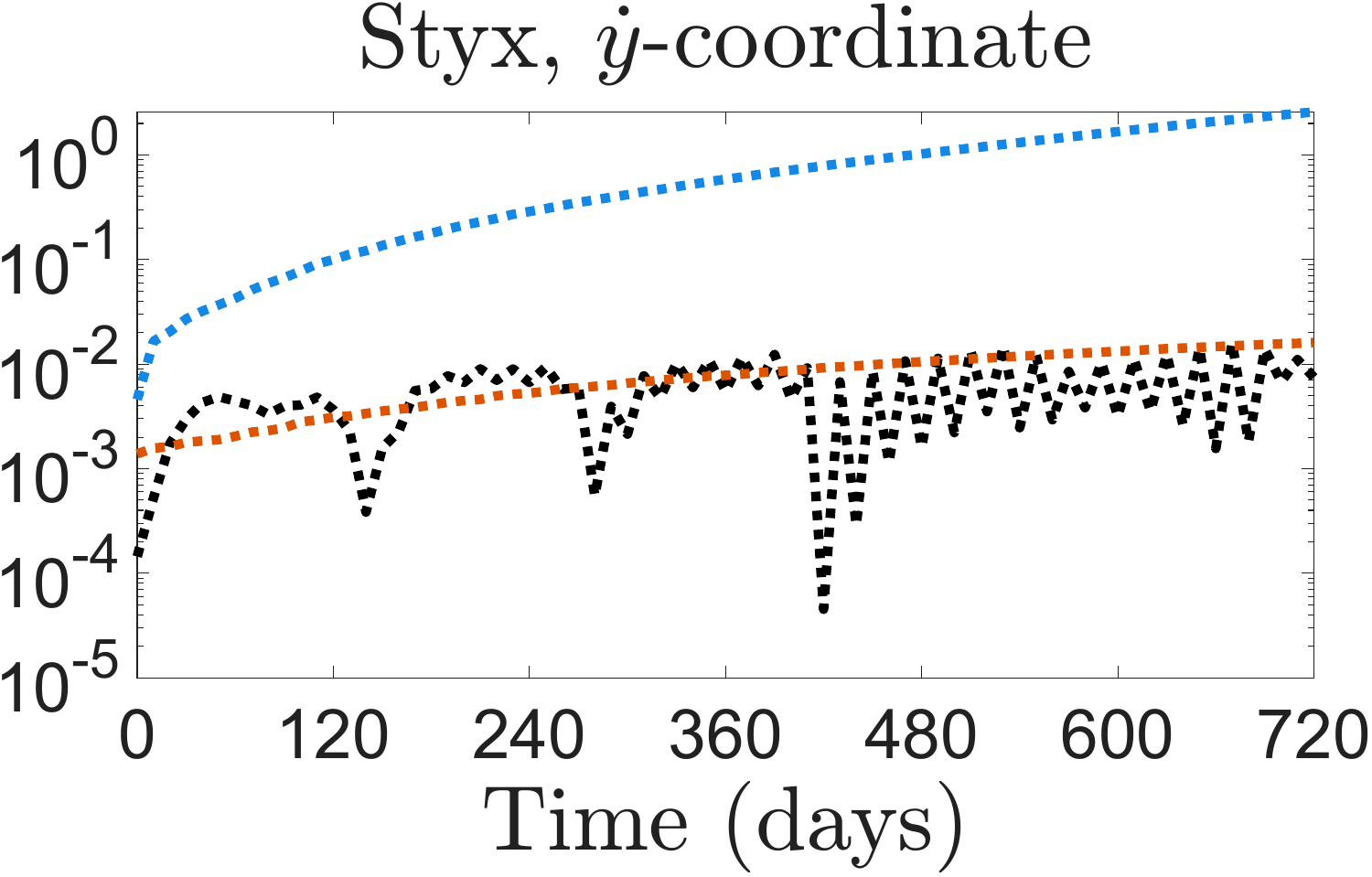}
        \includegraphics[width=0.32\linewidth]{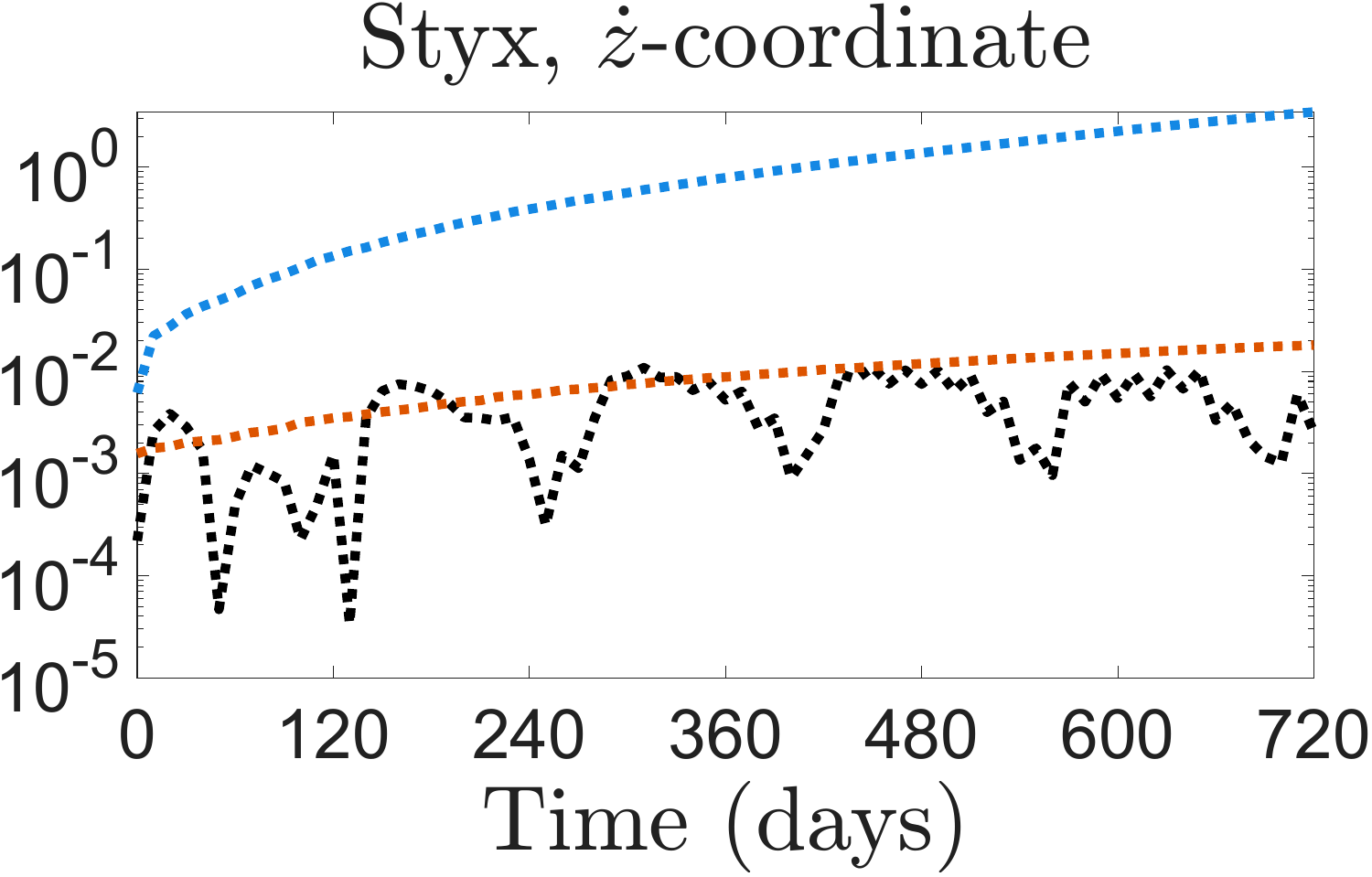}
    \end{minipage}

    \caption{First row: Exact and predicted trajectories with kEDMD for the position of Styx (smallest body in the Pluto--Charon system) relative to Pluto. Second row: Exact relative pointwise errors together with the corresponding error bounds and expected errors. Third and fourth rows: As in the first and second rows, but for the velocities of Styx relative to Pluto.}
    \label{fig:pluto_charon_pred}
\end{figure}

\subsection{Dictionary learning via error bounds}

A key challenge in Koopman--based methods is the choice of dictionary or kernel \cite{colbrook2023multiverse}. Many recent approaches address this using machine-learning techniques to learn dictionaries for dynamic mode decomposition \cite{lusch2018deep,yeung2019learning,li2017extended,Takeishi2017nips,Mardt2018natcomm,otto2019linearly}. Given a set of hyperparameters (here, the dictionary functions or kernel), a Koopman model is constructed from training data, and its performance on a validation set is used to refine those hyperparameters. After multiple tuning iterations, the unbiased performance and generalizability of the resulting model are assessed on a held-out test dataset.

The error bounds developed in this paper enable an improved workflow. Rather than requiring a separate validation dataset to assess model performance, hyperparameters can be optimized directly by minimizing the error bounds computed from the training data. The test dataset then retains its usual role for unbiased evaluation. This leads to more efficient use of data, particularly when data are limited, and provides a principled approach to dictionary selection.

We apply this methodology to optimize hyperparameters for EDMD applied to the Pluto--Charon dataset. The training dataset consists of $10$ years of data, with the subsequent $10$ years reserved for testing; no separate validation dataset is required. We apply EDMD using a dictionary of $200$ RBFs of the form
\[\setlength\abovedisplayskip{6pt}\setlength\belowdisplayskip{6pt}
\psi_i^{(s,\nu)}(x)=f^{(s,\nu)}(\|x-c_i\|),\quad\text{ for }\quad  f^{(s,\nu)}(r)=(sr)^\nu K_{\nu}(sr),
\]
where $K_\nu$ is the modified Bessel function of the second kind of order $\nu$, and $\{c_i\}_{i=1}^{200}$ are fixed centers sampled uniformly at random from the bounding box of the trajectory. We aim to learn optimal values of $s,\nu\in\mathbb{R}^+$. To improve quadrature convergence, before normalizing the data we shift it so that the barycentre of the Pluto--Charon system lies at the origin. This requires only the masses of the bodies and ensures that trajectories remain close to the centers of the chosen radial basis functions. 

We approximate the one-step Koopman operator and, for numerical stability, construct a reduced basis via a truncated SVD retaining $r=50$ modes. The hyperparameters $s,\nu\in\mathbb{R}^{+}$ are chosen to minimize the average error bounds produced by \cref{alg:resDMD_KMD} for the components of the full-state observable $g_i(x)=[x]_i$, $i=1,\dots,36$, averaged over $21$ time steps. Since the full-state observable is not contained in the dictionary, initialization errors are included as described in \cref{sec_kmd_eb}. Specifically, we minimize the loss
\[\setlength\abovedisplayskip{6pt}\setlength\belowdisplayskip{6pt}
L(s,\nu)=\frac{1}{21}\sum_{n=0}^{20}\left(\sum_{i=1}^{36}E_n(g_i,s,\nu)\right),
\]
where
\[\setlength\abovedisplayskip{6pt}\setlength\belowdisplayskip{6pt}
E_n(g_i,s,\nu)=\|\mathbf{K}^n\|\|g_i-\mathbf{\Psi}\mathbf{g}_i\|+\sum_{j=0}^{n-1} 
\|\mathbf{K}^j\|
\sqrt{(\mathbf{K}^{n-j-1}\mathbf{g}_i)^*(\mathbf{L}-\mathbf{K}^*\mathbf{K})(\mathbf{K}^{n-j-1}\mathbf{g}_i)}.
\]
Here, $\mathbf{K}$, $\mathbf{L}$ are the EDMD and ResDMD matrices, depending on $s$ and $\nu$, written with respect to the SVD basis (so that $\mathbf{G}=\mathbf{I}$ and $\mathbf{A}=\mathbf{K}$).

We solve the resulting optimization problem using the Adam stochastic optimization algorithm \cite{kingma2014adam}, implemented in PyTorch. The learning rate is set to $10^{-2}$, and we run $500$ Adam epochs starting from $s=1/5$ and $\nu=1$. The results are shown in \cref{fig:pluto_charon_hyperparameter}. We plot the training-set error bounds (which constitute the loss being minimized) together with the exact $L^2$ and pointwise errors of the predicted trajectories. To assess generalizability, the exact errors are computed on the test dataset. The trained dictionary has much smaller error bounds, showing clear improvement. Moreover, as the optimization reduces the error bounds, the exact $L^2$ errors (which remain bounded above by the error bounds) and the pointwise errors decrease in tandem, indicating that the bounds accurately capture the true system error. Hence, by training on just the a posteriori error bounds we have significantly improved performance of the model in terms of its exact prediction errors.

\begin{figure}[t]
    \centering
        \includegraphics[height=0.3\linewidth]{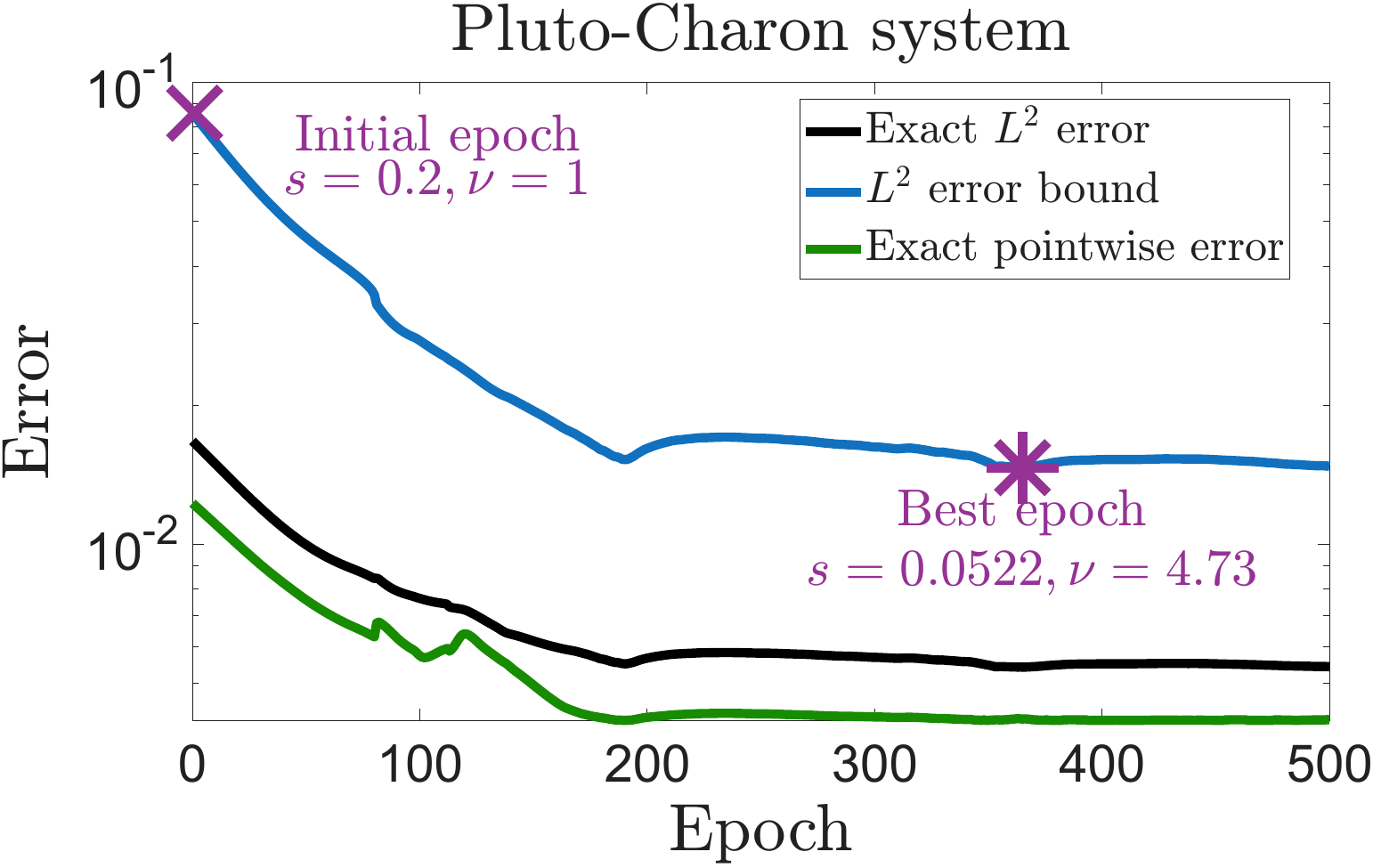}\hfill
    \includegraphics[height=0.3\linewidth]{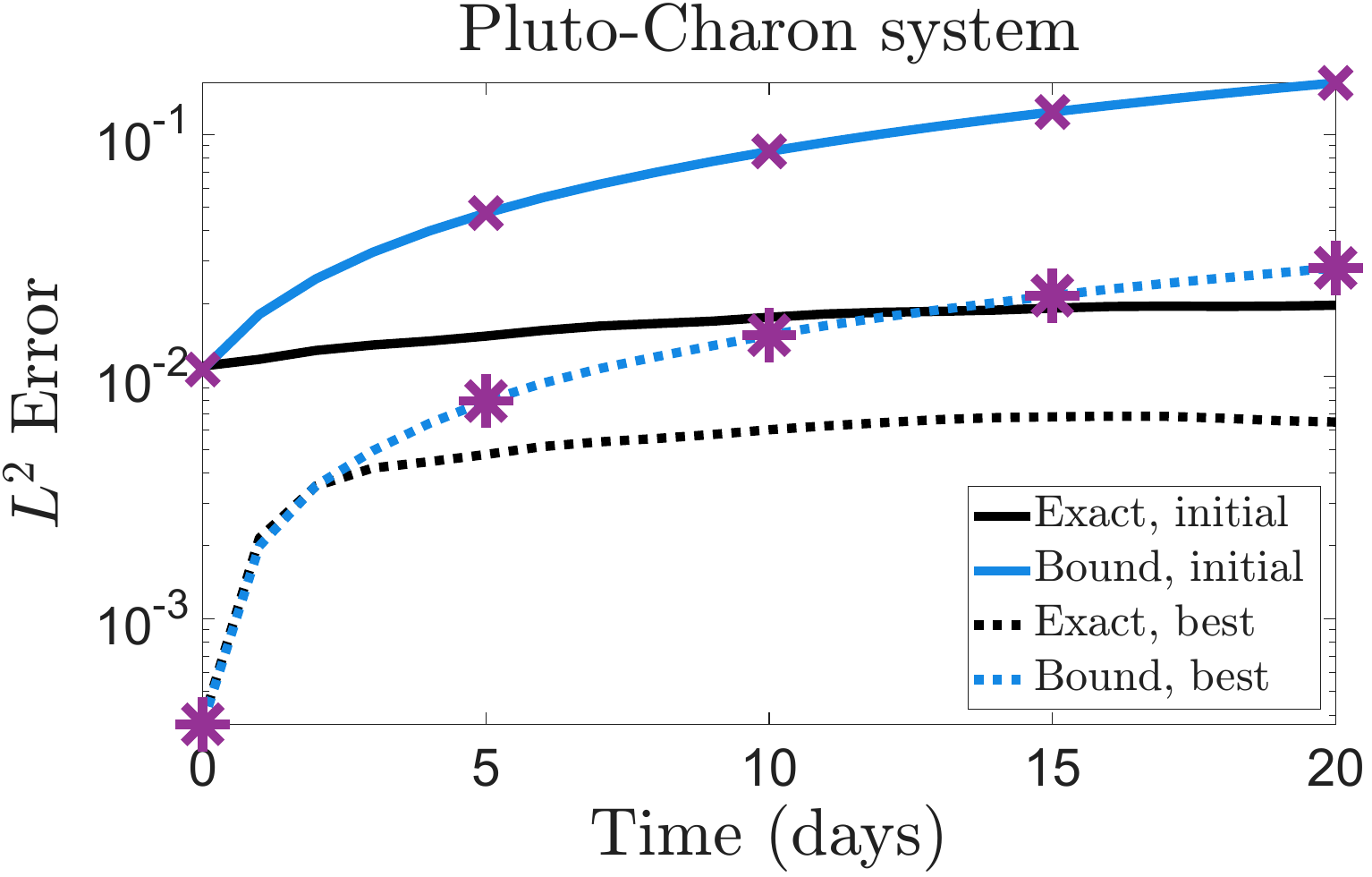}
    \caption{Left: Averaged error bounds, exact $L^2$ errors, and pointwise errors for the Pluto--Charon system over $500$ iterations of the Adam stochastic optimization algorithm. Right: Comparison of exact $L^2$ forecast errors and error bounds over $20$ time steps for the initial parameter choice (marked by a cross) and the best parameters found by Adam (marked by a star).}
    \label{fig:pluto_charon_hyperparameter}
\end{figure}

\section{Conclusion}

This work developed a suite of theoretically grounded algorithms that produce computable error bounds for Koopman--based approximations of dynamical systems and measure Koopman subspace invariance. These certificates lead naturally to the Principal Angle Decomposition (PAD), a dimensionality-reduction procedure that complements the SVD by preferentially retaining directions that are (approximately) invariant under the Koopman operator. Across high-dimensional and chaotic examples, the resulting upper bounds and expected error surrogates track the observed prediction errors for Koopman and Perron--Frobenius mode decompositions. Moreover, because the bounds are explicit functions of the chosen feature space, they can be used to guide dictionary design and refinement directly from the available data, providing a principled route to feature selection when data are limited. 

More broadly, the proposed methodology strengthens the reliability of data-driven modeling by attaching quantitative uncertainty information to Koopman outputs without requiring additional measurements beyond standard snapshot data. Rigorous bounds support decision-making when guarantees are needed, while expected error estimates offer a practical complement when conservative certification is unavoidable. A limitation of the expected error methodology described in this paper is the assumption that the relevant functions are distributed according to a Gaussian process. In general, this is not the case. Future work may involve methods to measure the distributional distance, or to use Gaussian process regression to optimise the kernel used for the expected errors. Gaussian process regression may also be integrated into dictionary refinement using the error bounds.

Finally, as Koopman--based methods continue to gain traction in control due to their global linearization viewpoint \cite{otto2021koopman,strasser2025overview,brunton2021modern,mauroy2020koopman}, these tools provide a foundation for verified Koopman control, including integration with established workflows such as model predictive control. The expectation-based surrogates may also be of independent interest in machine learning settings where calibrated error prediction and model selection are central.




\linespread{0.97}
\bibliographystyle{jabbrv_siam}
\bibliography{bib_file_koopman} 
\end{document}